\documentclass[10pt]{article}      

\usepackage[dvips]{graphics}      
     
\setlength{\hoffset}{-0.9in}      
\setlength{\voffset}{-0.8in}      
\setlength{\textwidth}{165mm}      
\setlength{\textheight}{240mm}      

\usepackage{amssymb}      
\usepackage{amsmath}      
\usepackage{amsthm}      
\usepackage{amsbsy}      
\usepackage[dvips]{graphics}      
\usepackage{amscd}      
 
\theoremstyle{plain}      
\newtheorem{theorem}{Theorem}

\newtheorem{lemma}{Lemma}[section]     
\newtheorem{corollary}{Corollary}     
\newtheorem{proposition}{Proposition}     
\newtheorem{conjecture}{Conjecture}      

\newtheorem{definition}{Definition}    
\theoremstyle{remark}      
\newtheorem{remark}{Remark}

\newcommand{\Z}{{\mathbb{Z}}}      
      
\newcommand{\R}{{\mathbb{R}}}      

\newcommand{\vol}{{\rm vol}}

\begin{document}

\title{Asymptotics of generalized Hadwiger numbers}
 \author{      
\begin{tabular}{cc}      
Valentin Boju &  Louis Funar \\      
\small\em Montreal Tech, Institut de Technologie de Montreal & \small \em Institut Fourier BP 74, UMR 5582 \\      
\small\em POBox 78574, Station Wilderton & \small \em University of Grenoble I \\      
\small \em Montreal, Quebec H3S 2W9,  Canada &  \small \em 38402 Saint-Martin-d'H\`eres cedex, France \\      
\small\em e-mail: {\tt valentinboju@montrealtech.org} & \small \em e-mail: {\tt funar@fourier.ujf-grenoble.fr} \\      
\end{tabular}      
}

\maketitle

\begin{abstract}
We give asymptotic estimates for the number of 
non-overlapping homothetic copies of  some centrally symmetric oval $B$ 
which have a common point with a 2-dimensional domain $F$ having rectifiable boundary, 
extending previous work of the L.Fejes-Toth, K.Borockzy Jr., D.G.Larman, 
S.Sezgin, C.Zong and the authors.  The asymptotics compute the length 
of the boundary $\partial F$ in the Minkowski metric determined by $B$. 
The core of the proof consists of  a method for sliding  convex beads along 
curves with positive reach in the Minkowski plane. 
We also prove that level sets are rectifiable subsets, extending a theorem of  
Erd\"os, Oleksiv and Pesin for the Euclidean space to the Minkowski space. 
 
\vspace{0.1cm}
\noindent MSC (AMS) Subject Classification: 52 C 15, 52 A 38, 28 A 75.  
\end{abstract}

\section{Introduction}

For closed topological disks $F, B \subseteq \R^{d}$, we denote by 
$N_{\lambda}(F,B)\in \Z_+$ the following generalized Hadwiger number. 
Let $A_{F,B, \lambda}$ denote the family of all sets, homothetic to
$B$ in the ratio $\lambda$, which have only boundary points in common with 
$F$. Then $N_{\lambda}(F,B)$ is the greatest
integer $k$ such that $A_{F, B, \lambda}$ contains $k$ sets with 
pairwise disjoint interiors. In particular, $N_{1}(F,F)$
is the Hadwiger number of $F$ and $N_{\lambda}(F,F)$ the generalized 
Hadwiger number considered first by Fejes Toth  for polytopes 
in (\cite{Fe1,Fe2}) and 
further in \cite{BF}. Extensive bibliography and results 
concerning this topic can be found in \cite{B}. 
The main concern of this note is to find asymptotic estimates 
for $N_{\lambda}(F,B)$ as $\lambda$ approaches $0$, in terms 
of geometric invariants of $F$ and $B$, as it was done for $F=B$ 
in \cite{BF}, and to seek for the higher order terms.  

\vspace{0.2cm}
\noindent 
Roughly speaking, counting the number of homothetic 
copies of $B$ packed along the surface of a 
$d$-dimensional body $F$ amounts to compute the 
$(d-1)$-area of its boundary, up to a certain density 
factor depending only on $B$. The density factor is especially 
simple when dimension $d=2$. 

\vspace{0.2cm}
\noindent 
 A bounded convex centrally symmetric domain $B$ determines a 
Banach structure on $\R^n$ and thus a metric, usually called 
the Minkowski metric associated to $B$ (see \cite{MSW1,T}).  In particular 
it makes sense to consider the length of curves with respect to the 
Minkowski metric.   

\vspace{0.2cm}
\noindent The main result of this paper states the convergence 
of the number of homothetic copies times the homothety factor
to half of the Minkowski length of $\partial F$, 
in the case of planar domains $F$ having rectifiable boundary. 
In order to achieve this we need  first a regularity result 
concerning level sets that we are able to prove in full generality 
in the first section. This is a generalization of a theorem due 
to Erd\"os, Oleksiv and Pesin for the Euclidean space to the Minkowski 
space. The core of the paper is the second section which 
is devoted to the proof of the main result stated above. 
We first prove it for curves of positive reach (following Federer \cite{Fed}) 
and then deduce the general case from this. 
The remaining sections contain partial results concerning the 
higher order terms for special cases (convex  and positive reach
domains) and an extension of the main result in higher dimensions 
for domains with convex and smooth boundary.

\section{Level sets}
Through out this section $B$  will denote a centrally symmetric  compact 
convex domain in $\R^n$.  
Any such $B$ determines a norm 
$\|\, \|_{B}$ by 
$\| x - y\|_{B} = \|x - y\|/\|o - z\|$, 
where $\| \|$ is the Euclidean norm, $o$ is
the center of $B$ and $z$ is a point on the boundary 
$\partial B$ of $B$ such that  the half-lines $|o z$ and $|x y$ are parallel. 
When equipped with this norm, 
$\R^{n}$ becomes a Banach space whose unit disk is 
isometric to $B$. 
We also denote by $d_B$ the distance 
in the $\|\,\|_B$ norm, called also the Minkowski metric structure on 
$\R^n$ associated to $B$. 
We set $xy$, respectively $|xy$, $|xy|$ for the line, 
respectively half-line and segment determined by the points $x$ and $y$. 
As it is well-known in Minkowski geometry segments are geodesics 
but  when $B$ is not strictly convex one might have also  
other geodesic segments than the usual segments.

\vspace{0.2cm}
\noindent 
The goal of this section is to generalize the Erd\"os theorem 
about the Lipschitz regularity of level sets from the Euclidean space 
to an arbitrary Minkowski space (see \cite{E}). 
We will make use of it only for $n=2$ in the next section but we 
think that the general result is also of independent interest 
(see also \cite{Fu,GP}).  

\vspace{0.2cm}
\noindent 
The theorem for the Euclidean space was stated and the 
beautiful  ideas of the proof were 
sketched by Erd\"os in \cite{E}; forty years later 
the full details were worked out by Oleksiv and Pesin in \cite{OP}.

\begin{theorem}\label{lip}
If the set $M$ is bounded  and $r$ is large enough then the level set 
$M_r=\{x\in \R^n; d_B(x,M)=r\}$ is a Lipschitz hypersurface in the 
Minkowski space. Furthermore, for arbitrary $r>0$ the level set 
$M_r$ is the union of finitely many 
Lipschitz hypersurfaces and in particular it is a $(n-1)$-rectifiable 
subset of $\R^n$. 
\end{theorem}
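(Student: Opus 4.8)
\emph{Plan of proof.} Since the norm $\|\cdot\|_B$ is bi-Lipschitz equivalent to the Euclidean norm, the property ``locally a finite union of Lipschitz graphs'' is insensitive to which of the two metrics one uses, so I will work Euclideanly throughout. I start from the decomposition $M_r=\partial\{d_B\le r\}\cup\partial\{d_B\ge r\}$, which holds because a point $x$ with $d_B(x,M)=r$ either lies on $\partial\{d_B\le r\}$ or is interior to $\{d_B\le r\}$, in which case --- moving $x$ towards one of its nearest points in $\overline M$ strictly decreases the distance --- it lies on $\partial\{d_B\ge r\}$ (and conversely $\partial\{d_B\ge r\}\subseteq M_r$ by continuity). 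The two pieces will be handled by one and the same device.

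The device is a uniform cone lemma. Let $x\in M_r$ and let $p\in\overline M$ be a nearest point, so that $\|x-p\|_B=r$ and $x\in\partial(p+rB)$. Since $0$ is the centre of $B$, the convex body $p+rB$ contains the Euclidean ball of radius $\rho_B r$ about $p$, where $\rho_B>0$ is the Euclidean inradius of $B$, while $\|x-p\|_B=r$ forces $\|x-p\|\le R_B r$ with $R_B$ the Euclidean circumradius of $B$; hence, writing $a=(p-x)/\|p-x\|$ and $\vartheta:=\arcsin(\rho_B/R_B)>0$, a number \emph{depending only on $B$}, the body $p+rB$ contains the solid cone $x+C(a,\vartheta)$ and, by a support-function estimate, its complement contains $x+C(-a,\vartheta)$. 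As $\{d_B\le r\}\supseteq p+rB$ and $\{d_B\ge r\}\subseteq\R^n\setminus(p+rB^{\circ})$, this gives at every boundary point $y$ of $\{d_B\le r\}$ a one-sided cone of aperture $\vartheta$ into $\{d_B\le r\}$ whose axis is one of the directions $(p-y)/\|p-y\|$ with $p$ nearest to $y$, and at every boundary point of $\{d_B\ge r\}$ the analogous cone into $\{d_B\ge r\}$ with the opposite axis. A one-sided cone condition with \emph{fixed} axis and aperture, holding at all boundary points in a neighbourhood, already pins that boundary to a single Lipschitz graph, for no two boundary points can differ by a vector interior to the resulting double cone.

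What remains is to control how the admissible axes vary, and here one uses upper semicontinuity: the correspondence $y\mapsto\{p\in\overline M:\|y-p\|_B=d_B(y,M)\}$ is upper semicontinuous, so as $y\to x_0$ the admissible axes accumulate only on the compact set $\mathcal A_0=\{\pm(p-x_0)/\|p-x_0\|:\ p\ \text{nearest to}\ x_0\}$. Covering a neighbourhood of $\mathcal A_0$ in $S^{n-1}$ by finitely many spherical caps of angular radius $<\vartheta/2$, with centres $w_1,\dots,w_m$, and applying the cone lemma to the part of $M_r$ whose axis lies in the $i$th cap, one obtains a ball $U$ about $x_0$ in which $M_r$ is contained in a union of $m$ Lipschitz graphs over the hyperplanes $w_i^{\perp}$, each of Lipschitz constant depending only on $B$. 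Since $M_r$ is closed and bounded, it is compact, so finitely many such balls cover it and $M_r$ is a finite union of Lipschitz graphs; as $(n-1)$-dimensional Lipschitz graphs are $(n-1)$-rectifiable, the final assertion follows. For $r$ larger than a constant depending only on $\overline M$ and $B$, the function $d_B(\cdot,M)$ has no local maximum at level $r$ --- such maxima lie on the medial locus of $\overline M$, which is bounded --- so $M_r=\partial\{d_B\le r\}$; moreover two distinct nearest points $p\ne p'$ of a point $y\in M_r$ satisfy $\|p-p'\|_B\le\operatorname{diam}_B M$ while $\|y-p\|_B=\|y-p'\|_B=r$, confining the corresponding cones near $y$ to a single half-space-like region, so one coordinate direction works throughout $U$ and $M_r$ is a genuine Lipschitz hypersurface.

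I expect the main obstacle to be making the cone lemma go through when $B$ is neither smooth nor strictly convex: then the inscribed body may have flat facets, its normal cone at the touching point may be wide, and there may be geodesic segments besides the ordinary ones, so ``the normal varies continuously'' is false and must be replaced by the explicit aperture $\vartheta=\arcsin(\rho_B/R_B)$ combined with the upper semicontinuity above. The delicate further point is that this same aperture also bounds, uniformly, the number of Lipschitz sheets of $M_r$ through a single point, which is what promotes ``locally finitely many graphs'' to ``globally finitely many graphs'' via compactness.
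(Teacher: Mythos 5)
Your argument is essentially correct, but it takes a genuinely different route from the paper's. The paper's proof is Erd\H{o}s-style: Lemma~\ref{ang2} establishes a two-ball angle estimate, $\widehat{yxz}\le\alpha(B)<\pi$ whenever $z\in B(y,r)\setminus\operatorname{int}B(x,r)$, via a planar slice through parallel support lines and Birkhoff $B$-orthogonality; Lemma~\ref{ang1} shows that $M$ subtends a small angle at $x\in M_r$ once $r$ is large; and combining these yields a cone exclusion with a single, globally chosen axis $|cx$ pointing away from a fixed center $c$ of a ball containing $M$. The arbitrary-$r$ case is then handled by partitioning $M$ into pieces of small diameter and applying the large-$r$ result to each. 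You instead inscribe in the nearest-point ball $p+rB$ at $y$ an explicit cone of aperture $\arcsin(\rho_B/R_B)$ with axis pointing toward the nearest point $p$, control the variation of this axis by upper semicontinuity of the nearest-point correspondence, and cover the admissible directions by finitely many spherical caps. Your aperture is explicit in terms of $B$ and your ``arbitrary $r$'' case is handled by the cap covering rather than by a partition of $M$ --- a dual manoeuvre. Both routes are sound.

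Two of your auxiliary claims are incorrect, though fortunately dispensable. First, the inference from $\{d_B\ge r\}\subseteq\R^n\setminus(p+rB^{\circ})$ to ``the opposite cone lies in $\{d_B\ge r\}$'' does not follow: the cone lies in the superset $\R^n\setminus(p+rB^{\circ})$, but a point of it may well satisfy $d_B<r$ by being close to some other element of $\overline M$. What you actually need, and in fact use, is only the cone with axis toward $p$, whose interior lies in $p+rB^{\circ}\subset\{d_B<r\}$ and hence avoids $M_r$; applying this at two points $y_1,y_2$ with nearby axes already gives the two-sided exclusion for $M_r$ directly, so the decomposition $M_r=\partial\{d_B\le r\}\cup\partial\{d_B\ge r\}$ is superfluous. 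Second, the assertion that local maxima of $d_B(\cdot,M)$ lie on the medial locus of $\overline M$, ``which is bounded'', is unsound: the medial locus of a bounded set is typically unbounded (for $M$ a pair of points in the plane it is an entire line). The correct observation is simply that a local maximum of $d_B(\cdot,M)$ cannot lie far from $\overline M$: if $\overline M\subset B(c,r_0)$ and $\|x-c\|_B>2r_0$, then moving $x$ radially outward from $c$ strictly increases $\|x-p\|_B$ for every $p\in\overline M$ by convexity of the norm, so local maxima occur only at bounded level, and levels beyond that bound are free of them.
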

\begin{proof}
Our proof extends the one given by Erd\"os \cite{E} and  
Oleksiv and Pesin in \cite{OP}. 
Let $r_0$ such that $M\subset B(c,r_0)$, where $B(c,r_0)$ denotes the 
metric ball of radius $r_0$ centered at $c$. 
Consider  first $r$ large enough in terms of $r_0$.

\begin{lemma}\label{ang1}
Let $B(x,r)$ be such that $B(x,r)\cap B(c,r_0)\neq\emptyset$ and 
$B(x,r)\setminus B(c,r_0)\neq\emptyset$. 
Set $\gamma$ for the angle under which we can see $B(c,r_0)$ from $x$. 
Then, for any $\varepsilon >0$ there exists some $r_1(\varepsilon,r_0)$ 
which depends only $B, r_0$ and $\varepsilon$ such that, for 
any $r \geq r_1(\varepsilon,r_0)$ we have $\gamma <\varepsilon$. 
\end{lemma}
\begin{proof}
If $r_{\rm max}$ (respectively $r_{\rm min}$) denotes 
the maximum (respectively minimum) 
Euclidean radius of $B$, then 
\begin{equation}  \sin \frac{\gamma}{2} \leq \frac{r_0r_{\rm max}}{r_1r_{\rm min}}\end{equation}
\end{proof}

\begin{lemma}\label{ang2}
There exists some $\alpha(B)< \pi$ such that 
$\widehat{yxz}\leq \alpha$, for any 
$z\in B(y,r)\setminus {\rm int}B(x,r)$. 
\end{lemma}
\begin{proof}
The problem is essentially two-dimensional as we can cut the two metric balls 
by a 2-plane containing  the line $xy$ and the point $z$. 
Suppose henceforth $B$ is planar and consider 
support lines $l^+$ and $l^-$ parallel to $xy$. 

\vspace{0.2cm}
\noindent
Since $l^+\cap \partial B(x,r)$ is convex it is a segment $|v_1^+v_2^+|$, 
possibly degenerate to one point.  We choose $v_1^+$ to be the farthest 
from $l^+\cap B(y,r)$ among $v_1^+$ and $v_2^+$. 
By symmetry $l^-\cap \partial B(x,r)$ is a parallel segment $|v_1^-v_2^-|$, 
with $v_1^+v_1^-$ parallel to $v_2^+v_2^-$.
Let $v^+$ (and $v^-$) be the midpoint of  
$|v_1^+v_2^+|$ (respectively of $|v_1^-v_2^-|$).
Observe that $x\in |v^+v^-|$. We assume that $v_2^+$ and $v_2^-$ lie in the 
half-plane determined by $v^+v^-$ and containing 
$y$.  

\vspace{0.2cm}
\noindent
We claim then that $\widehat{yxz}\leq \max(\widehat{yxv_1^+}, 
\widehat{yxv_1^-})
$.  This amounts to prove that any $z\in B(y,r)\setminus B(x,r)$ 
should lie in the half-plane determined by  the line $v_1^+v_1^-$ 
and containing $y$, as in the picture below. 

\vspace{0.2cm}
\begin{center}
\includegraphics{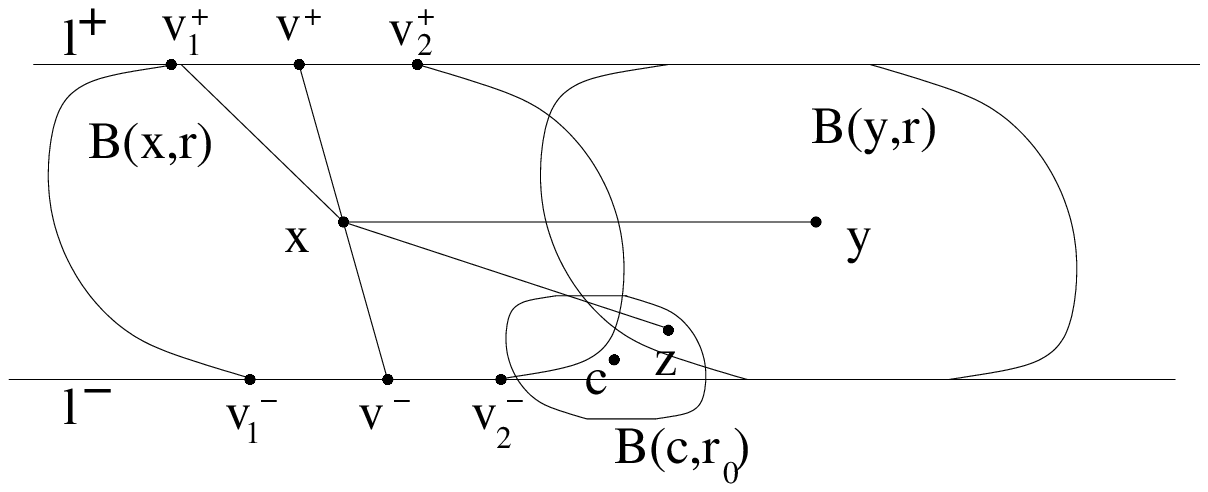}
\end{center}
\vspace{0.2cm}

\vspace{0.2cm}
\noindent
Suppose the contrary, namely that there exists 
$z\in B(y,r)\setminus B(x,r)$ in the  opposite half-plane. 
Let $T$ be the translation in the direction $|yx$ of length $|yx|$. 
We have $T(B(y,r))=B(x,r)$ and $z\in B(y,r)$, hence 
$T(z)\in B(x,r)$.  
The half-line $|T(z)z$ intersects the segment $|v^+v^-|$ in a point 
$w\in B(x,r)$.

\vspace{0.2cm}
\noindent
Suppose first that $B$ is strictly convex.  
Then both $w$ and $T(z)$ belong to $B(x,r)$ while the point 
$z\not\in {\rm int}B(x,r)$. This contradicts the strict  convexity of 
$B(x,r)$, since $T(z)\neq w$. 

\vspace{0.2cm}
\noindent
The direction $v^+v^-$ is called the dual $d^*$ 
of $d=xy$ with respect to $B$ (also called the $B$-orthogonal, 
as introduced by Birkhoff). 

\vspace{0.2cm}
\noindent
It suffices now to remark that for given $B$ the quantity 
$\sup_{\pi} \sup_{d} \max (\angle (d, d^*), \angle(d,-d^*))$, 
the supremum being taken over all planes $\pi$ and all directions $d$, 
is  bounded from above by some $\alpha < \pi$.  In fact the space of 
parameters is a compact (a Grassmannian product the sphere) and that 
this angle cannot be $\pi$ unless the  planar slice degenerates.
 
\vspace{0.2cm}
\noindent
Let us assume now that $B$ is not strictly convex. 
Then the argument above shows that $w$, $T(z)$ belong to $B(x,r)$ 
while the point $z\not\in {\rm int}B(x,r)$. Therefore 
$z\in \partial B(x,r)$ and hence $w,z, T(z)\in \partial B(x,r)$.
Thus $w$ belongs to one of the two support lines $l^+$ or $l^-$.  
By symmetry it suffices to consider the case when  $w=v^+$. 
Since $T(\partial B(y,r)\cap l^+)\subset \partial B(x,r)\cap l^+$ 
it follows that  $\partial B(y,r)\cap l^+$ is the  segment 
$|T^{-1}(v_1^+) T^{-1}(v_2^+)|$. Thus $z$ belongs to the half-plane 
determined by $T^{-1}(v_1^+)$ and $T^{-1}(v_1^-)$, which is 
contained into the one determined by $v_1^+v_1^-$ and containing $y$.

\vspace{0.2cm}
\noindent
The compactness argument above extends to the non strict convex $B$. 
\end{proof}

\begin{remark}
We have $\widehat{yxz}\leq \max(\widehat{yxv^+}, \widehat{yxv^-})$ if 
$z\in B(y,r)\setminus B(x,r)$.  The proof is similar. 
The upper bound is valid for the 
closure of $B(y,r)\setminus B(x,r)$ as well.  
Therefore it holds also for 
$B(y,r)\setminus {\rm int}(B(x,r))$ provided that $B$ is strictly convex, 
but not in general, see for instance the case when $B$ is a rectangle and 
$xy$ is parallel to one side. 
\end{remark}

\vspace{0.2cm}
\noindent
If $\beta$ is an angle smaller than $\frac{\pi}{2}$ we set 
$K(x,\beta, |cx)$ for the cone with vertex $x$ of total angle $2\beta$, of    
axis $|cx$ and going outward $c$.
 
\begin{lemma}
Let us choose $\varepsilon$ such that $\alpha(B)+\varepsilon <\pi$.  
Then for any point $x\in M_r$, with $r\geq r_1(\varepsilon,r_0)$
we have $K(x,\pi-\alpha(B)-\varepsilon, |cx) \cap M_r=\{x\}$. 
\end{lemma}
\begin{proof}
If $x\in M_r$ then ${\rm int}(B(x,r))\cap M=\emptyset$. 
Moreover, $M\subset B(c,r_0)$ and so 
$M\subset B(c,r_0)\setminus {\rm int}(B(x,r))$. 
Let now $y\in M_r$, $y\neq x$.  
Thus $B(y,r)\cap (B(c,r_0)\setminus {\rm int}(B(x,r))\neq \emptyset$. 
Let then $z$ be a point from this set. 

\vspace{0.2cm}
\noindent
Then $z\in B(y,r)\setminus {\rm int}B(x,r)$ so that by lemma \ref{ang2} 
$\angle(yxz)\leq \alpha(B)$. Further lemma \ref{ang1} shows that 
$|\angle(czx)| \leq \varepsilon$, provided that $r\geq r_1(\varepsilon,r_0)$. 
Thus the angle made between the half-lines $|cx$ and $|xy$ 
is at least $\pi-\alpha-\varepsilon$, as can be seen in the figure.

\vspace{0.2cm}
\begin{center}
\includegraphics{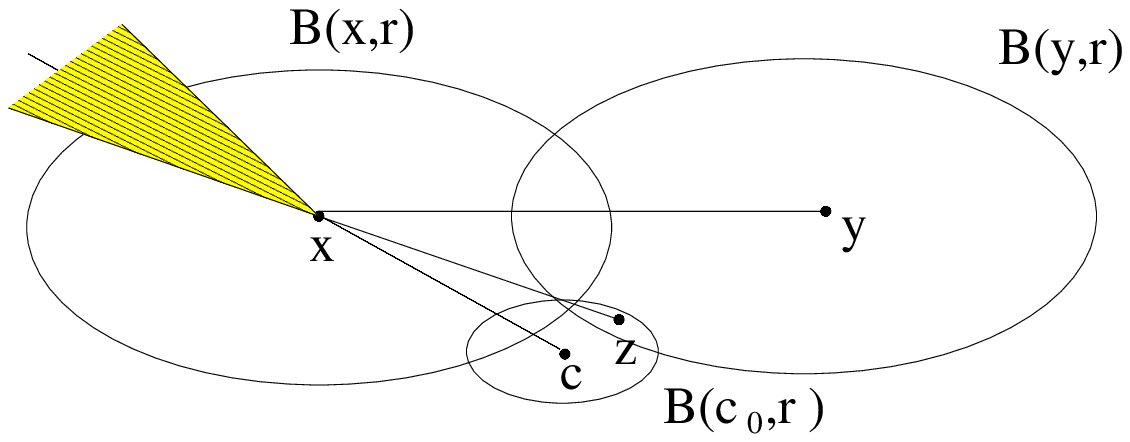}
\end{center}
\vspace{0.2cm}

\vspace{0.2cm}
\noindent
In particular $y$ cannot belong to the  cone 
$K(x,\pi-\alpha(B)-\varepsilon, |cx)$. This proves the lemma. 
\end{proof}

\vspace{0.2cm}
\noindent
{\em Proof of  the theorem}. Set $\beta=
\pi-\alpha(B)-\varepsilon$ and let $r\geq r_1(\varepsilon,r_0)$. 

\vspace{0.2cm}
\noindent
First take any $x\in M_r$ and let $U=M_r\cap K(c, \beta/2, |cx)$. 
If $u\in U$ then $K(u, \beta/2, |cx)\subset K(u, \beta, |cu)$ 
and hence 
\begin{equation} K(u, \beta/2, |cx)\cap M_r \subset K(u, \beta, |cu)\cap M_r =\emptyset\end{equation}
This means that for each $u\in U$ the cone with angle $\beta$ 
and axis parallel to  the fixed half-line $|cx$ contains no other points 
of $U$. Therefore $U$ is the graph of a function of $n-1$ variables 
satisfying a Lipschitz condition with constant equal 
to $\frac{1}{\tan \beta}$. 
 
\vspace{0.2cm}
\noindent
Let consider now the case when $r$ is arbitrary positive.
Choose then $s$ such that $r_1(\varepsilon, s)< r$.  
Split $M$ into a finite number of sets $M_j$ such that each $M_j$ has diameter 
at most $s$. It follows that 
$M_r\subset \cup_{j} {M_j}_r$. 
Since each ${M_j}_r$ is locally Lipschitz it follows that $M$ is 
locally the union of finitely many Lipschitz hypersurfaces.  
\end{proof}

\begin{corollary}\label{lipman}
If $M\subset \R^2$ 
then for almost all $r$ the level set 
$M_r$ is a 1-dimensional  Lipschitz manifold i.e.  the union of  disjoint 
simple closed  Lipschitz curves.   
\end{corollary}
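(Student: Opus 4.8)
The strategy is to upgrade Theorem~\ref{lip}. That theorem already shows that for \emph{every} $r>0$ the set $M_r$ is a finite union of Lipschitz curves, and that near each of its points $M_r$ lies inside a single Lipschitz graph over the line orthogonal to $\vec{cx}$. For $r$ large the uniform cone produced in the proof gives such a graph near \emph{every} point of $M_r$, so $M_r$ is already a compact Lipschitz $1$-manifold, i.e. a finite disjoint union of simple closed Lipschitz curves; the content of the corollary is thus entirely about small $r$, where one only knows that $M_r$ is contained in the union of the nice curves $(M_j)_r$ coming from the splitting $M=\bigcup_j M_j$. For such $r$ what must be excluded, for almost every value, are the non-manifold points of $M_r$: the points where the Lipschitz pieces branch, cross degenerately, or terminate.

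Here is how I would run the argument. Fix a range $r\in[\rho,2\rho]$, choose $s$ with $r_1(\varepsilon,s)<\rho$ and split $M=M_1\cup\dots\cup M_N$ with $\operatorname{diam}M_j<s$; set $g_j=d_B(\cdot,M_j)$, $g=\min_j g_j=d_B(\cdot,M)$, so $M_r=g^{-1}(r)$. Near $x\in M_r$ only the active indices $J(x)=\{j:g_j(x)=r\}$ matter, and there $M_r$ is the topological boundary of $\bigcap_{j\in J(x)}\{g_j\ge r\}$, the outer envelope of finitely many of the simple closed Lipschitz curves $(M_j)_r$. Because each $(M_j)_r$ is locally a Lipschitz graph with one \emph{fixed} Lipschitz constant, a point where exactly two active curves cross transversally is, after a rotation of coordinates, still the graph of a Lipschitz function, hence a manifold point of $M_r$. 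Consequently the non-manifold locus of $M_r$ is contained in a single set $S\subseteq\R^2$, independent of $r$: the set of points having either three or more nearest points in $M$, or exactly two nearest points in mutually tangent position. Granting that $\mathcal H^1(S)=0$, the $1$-Lipschitz map $g$ satisfies $\mathcal H^1(g(S))\le \mathcal H^1(S)=0$, so $g(S)$ is Lebesgue-null; for every $r\notin g(S)$ we have $M_r\cap S=\varnothing$, so $M_r$ is locally a Lipschitz graph at each of its points, i.e. a compact $1$-dimensional Lipschitz manifold, which is precisely a finite disjoint union of simple closed Lipschitz curves. Running $\rho$ through $2^{-n}$ and unioning the resulting null sets over $n$ gives the statement for almost every $r>0$.

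The heart of the matter, and the step I expect to be the real obstacle, is the estimate $\mathcal H^1(S)=0$. In the Euclidean case this rests on the Erd\"os analysis behind Theorem~\ref{lip} together with the classical fact that the ``branch set'' of nearest-point projection has codimension two, so that in the plane the set of points with three or more nearest points is $0$-dimensional, hence $\mathcal H^1$-null; the tangential two-nearest-point locus is handled similarly, using that $M_r$ is $\mathcal H^1$-finite for every $r$ and therefore cannot contain a one-parameter family of tangency points. The delicate point is that when $B$ is not strictly convex a $B$-bisector of two subsets of $M$ need not be one-dimensional, so this codimension-two bound must be re-derived inside the Minkowski geometry rather than quoted; I would expect the cone estimates of Lemma~\ref{ang2} and the compactness argument in the proof of Theorem~\ref{lip} to supply exactly the control on $\mathcal H^1(S)$ that is needed.
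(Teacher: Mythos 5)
The paper's own proof is a one-sentence citation: Ferry \cite{Fe} proved that for almost every $r$ the level set $M_r$ is a topological $1$-manifold, and combining this with Theorem~\ref{lip} (which already guarantees, for \emph{every} $r$, that $M_r$ is a finite union of Lipschitz curves) immediately yields that for almost every $r$ the set $M_r$ is a compact Lipschitz $1$-manifold, hence a finite disjoint union of simple closed Lipschitz curves. You instead try to establish the ``almost every $r$'' directly by a Hausdorff-measure argument. The skeleton is reasonable: isolate a set $S\subseteq\R^2$, independent of $r$ on each dyadic scale, containing all potential non-manifold points; show $\mathcal{H}^1(S)=0$; push it forward through the $1$-Lipschitz map $g=d_B(\cdot,M)$ to get a Lebesgue-null set of bad radii. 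But the argument is not a proof, because the pivotal step $\mathcal{H}^1(S)=0$ is left unestablished — you flag it yourself as ``the real obstacle.''

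This is a genuine gap, not a technicality. Even in the Euclidean case, bounding the multiple-nearest-point locus of an \emph{arbitrary} bounded set by codimension two is a substantive theorem (in essence what lies behind Ferry's and, earlier, Erd\H{o}s's work), and it is not a corollary of the cone estimates of Lemma~\ref{ang2}, which control the \emph{shape} of $M_r$ where it is nice, not the \emph{size} of the locus where it is not. In the Minkowski setting the situation is strictly worse: when $B$ is not strictly convex the $d_B$-bisector of two points can be a full-dimensional region of the plane, so the very heuristic ``two nearest points generically happens on a curve, three on a point'' fails, and the tangential two-nearest-point set you introduce is not obviously small either. So the proposal, as written, does not close; the shortest route to the corollary is the one the paper takes, namely to quote Ferry's manifold theorem and then invoke Theorem~\ref{lip} for the Lipschitz regularity. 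If you wanted to avoid the citation you would essentially have to reprove a Minkowski-space version of Ferry's result, which is a project in itself and not something Lemma~\ref{ang2} delivers for free.
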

\begin{proof}
In fact Ferry proved (see \cite{Fe}) that for almost all $r$  the level set 
$M_r$ is a 1-manifold. 
\end{proof}
\begin{remark}
Lipschitz curves are precisely those curves which are rectifiable. 
Notice also that the rectifiability does not depend 
on the particular Minkowski metric, as already observed by G\'olab 
(\cite{Go1,Go2}).  
\end{remark}

\begin{remark}
Stach\'o (\cite{St}) proved that level sets $M_r$ in the Minkowski space  
are rectifiable in the sense of Minkowski for all 
but countably many $r$ generalizing earlier results of Sz\"okefalvi-Nagy 
for planar sets.  
\end{remark}

\section{Planar domains: approaching the perimeter}

Unless explicitly stated otherwise, throughout this section, $B$  
will denote a centrally symmetric plane oval, where by oval we mean a 
compact convex domain with non-empty interior.

\vspace{0.2cm}
\noindent 
We assume henceforth that $\partial F$ is  a rectifiable curve, namely 
it is the image of a Lipschitz map from a bounded interval into the plane. 
Set $p_B(\partial F)$ for the length of $\partial
F$ in the norm $\| \,\|_{B}$.

\vspace{0.2cm}
\noindent 
Our main result generalizes theorem 1 from (\cite{BF}), where 
we considered the case $F=B$ and thus $F$ was convex.

\begin{theorem}\label{lim}
For any symmetric oval $B$ and 
topological disk $F$ with rectifiable boundary in the plane, we have
\begin{equation}p_B(\partial F) = 2\lim_{\lambda \to 0}\lambda N_{\lambda}(F,B) 
\end{equation}
\end{theorem}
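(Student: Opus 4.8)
The plan is to prove the theorem first for curves of positive reach (in the sense of Federer) and then pass to the general rectifiable case by approximation. For a curve $\Gamma=\partial F$ of positive reach, the key geometric device is what the introduction calls ``sliding convex beads along $\Gamma$'': one places a homothet $\lambda B$ so that it touches $\Gamma$, and then shows that one can move it monotonically along $\Gamma$ so that the next non-overlapping copy touches $\Gamma$ at a point whose arc-distance from the first contact point is, to first order, controlled by the Minkowski width of $\lambda B$ in the direction $B$-orthogonal to $\Gamma$. Concretely, if $x\in\Gamma$ is a contact point and $u$ is the tangent direction of $\Gamma$ at $x$, then the ``thickness'' of the bead $\lambda B$ that matters for packing along $\Gamma$ near $x$ is $\lambda$ times the Minkowski length of the chord of $B$ in the direction $u$; and the amount of Minkowski arclength of $\Gamma$ that one such bead ``uses up'' is asymptotically $2\lambda$ (the factor $2$ because beads are counted on the boundary and each unit of boundary is approached from essentially one side, or rather because the relevant chord has $B$-length normalized so that $\|\cdot\|_B$-distance across a copy of $\lambda B$ through its center is $2\lambda$). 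Summing over a near-optimal packing and over a near-optimal subdivision of $\Gamma$ into short arcs, one gets $\lambda N_\lambda(F,B)\to \tfrac12 p_B(\Gamma)$.

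In more detail, I would carry out the following steps. \emph{Step 1 (local model).} Fix a point $x\in\Gamma$ where $\Gamma$ has a tangent line $\ell$; by positive reach this holds everywhere and the tangent varies nicely. Show that for $\lambda$ small, any homothet of $B$ of ratio $\lambda$ meeting $\Gamma$ only in boundary points and lying near $x$ is squeezed between $\Gamma$ and its tangent-type supporting structure, so that its position is essentially determined by its point of contact, up to an error $o(\lambda)$ uniform on $\Gamma$ (here Lemma~\ref{ang2} and the cone estimates of Section~2 give the needed uniform control on how the copies can be oriented relative to $\Gamma$). \emph{Step 2 (upper bound).} Given a packing realizing $N_\lambda(F,B)$, associate to each bead its contact arc on $\Gamma$; show these arcs have pairwise disjoint interiors and each has $\|\cdot\|_B$-length at least $2\lambda - o(\lambda)$, so $2\lambda N_\lambda(F,B)\le p_B(\Gamma)+o(1)\cdot(\text{stuff})$, giving $\limsup 2\lambda N_\lambda \le p_B(\Gamma)$. \emph{Step 3 (lower bound).} Subdivide $\Gamma$ into arcs of Minkowski length slightly more than $2\lambda$ and, using the sliding construction, place one bead contacting $\Gamma$ inside (most of) each arc with pairwise disjoint interiors; this yields $\liminf 2\lambda N_\lambda \ge p_B(\Gamma) - o(1)$. \emph{Step 4 (reduction to positive reach).} For general rectifiable $\partial F$, approximate: inscribe/circumscribe polygonal or positive-reach curves $\Gamma_\varepsilon$ with $p_B(\Gamma_\varepsilon)\to p_B(\partial F)$, and compare $N_\lambda(F,B)$ with the counts for the approximating domains, using monotonicity of the generalized Hadwiger number under nesting of the domains together with the estimate that a rectifiable curve of length $L$ cannot support more than $L/(2\lambda)+o(1/\lambda)$ beads (this last bound follows again from Step 2's disjoint-arc argument, which only used rectifiability).

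The main obstacle is Step 3 together with the sliding construction underlying Step 1: one must show that the beads can actually be placed \emph{consecutively without overlap} while each one ``advances'' along $\Gamma$ by the correct Minkowski amount, and that the accumulated first-order error over $\sim 1/\lambda$ beads is still $o(1)$. This requires a uniform (in position along $\Gamma$) version of the local model, which is exactly where positive reach is used — it guarantees that the supporting geometry of $\Gamma$ does not degenerate and that a single $\lambda$-dependent error bound works simultaneously at all contact points. Handling the non-strictly-convex $B$ (so that $\partial B$ contains segments and the ``$B$-orthogonal direction'' is set-valued) is a secondary technical nuisance: there the relevant chord length must be chosen carefully, but the Remark after Lemma~\ref{ang2} and the compactness argument there show the orientation of the beads is still controlled, so the same estimates go through with the supremal choice.
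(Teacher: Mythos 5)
Your high-level plan matches the paper's: prove the statement first for boundaries of positive reach, then pass to general rectifiable boundaries. But the details diverge in several places, and the key reduction step in your Step 4 has a genuine gap.

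For the positive-reach case (Steps 1--3), the paper does not reason in terms of ``contact arcs on $\Gamma$'' of $B$-length $\approx 2\lambda$. Instead it forms the polygon $P(\lambda)$ through the centers of the beads, which lies on the outer parallel curve $\partial^+F_\lambda$, and establishes: (i) existence of a maximal \emph{almost complete} necklace (Proposition~\ref{necklace}, the sliding argument); (ii) a bound on the gap (Proposition~\ref{gap}), so that $p_B(P(\lambda))$ differs from $2\lambda N_\lambda$ by $O(\lambda)$; (iii) a Minkowski tube-type inequality $p_B(\partial^+F_\lambda)\le p_B(\partial F)+\lambda p_B(\partial B)$ (Proposition~\ref{leng}). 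The upper bound then reads $2\lambda N_\lambda\le p_B(P(\lambda))\le p_B(\partial^+F_\lambda)$, and the lower bound comes from lower semicontinuity of Minkowski length as $P(\lambda)\to\partial F$. Your Step~2 assigns each bead a contact arc on $\partial F$ of length $\ge 2\lambda-o(\lambda)$, but this is not obviously true: near a convex portion of $\partial F$ the contact points contract relative to the bead centers, so the contact arc can be strictly shorter than $2\lambda$ by an amount that depends on curvature, and on a general rectifiable curve (where a single bead may touch $\partial F$ in several points) the ``contact arc'' is not even well-defined. Your Step~3 (subdivide $\Gamma$ into arcs of length slightly more than $2\lambda$ and place one bead per arc) also needs work: consecutively placed beads along a concave stretch of $\partial F$ move \emph{toward} one another, and it is not clear that arcs of length $2\lambda+\varepsilon$ suffice to keep them disjoint with an error that is summably small. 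The paper's sliding argument sidesteps both problems by starting from a maximal necklace and proving it can be compressed into an almost complete one.

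The serious gap is in Step~4. You propose to approximate $\partial F$ by positive-reach curves and invoke ``monotonicity of the generalized Hadwiger number under nesting of the domains.'' There is no such monotonicity: if $F\subset F'$, the quantity $N_\lambda(F',B)$ can be either larger or smaller than $N_\lambda(F,B)$, since it tracks the \emph{perimeter} rather than the area (a long thin rectangle inside a disk has much longer boundary). Moreover, your fallback estimate (at most $L/(2\lambda)+o(1/\lambda)$ beads on a rectifiable curve of length $L$) was derived in Step~2 using positive reach, so it cannot be cited as ``only using rectifiability.'' The paper handles the non-positive-reach case entirely differently: it isolates the troublesome parts of $\partial F$ as $\lambda$-corners and $\lambda$-galleries, shows there are only finitely many maximal galleries, proves $p_B(Z_\lambda)\to 0$ (Lemma~\ref{meager}), applies the positive-reach result to the finitely many complementary arcs (Lemma~\ref{liminf}), and establishes the global upper bound via the auxiliary curve $W_\lambda(Q)$ built from inscribed polygons (Lemma~\ref{limsup}). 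Without a replacement for the gallery decomposition, your reduction to the positive-reach case does not go through.
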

 
\begin{remark}
The guiding principle of this paper is that we can construct some 
outer packing measure for sets in the Minkowski space which is  
similar to the packing measure defined by Tricot (see \cite{Tricot})  
but uses only {\em equal} homothetic copies of $B$ which are 
packed {\em outside} and hang on the respective 
set. These constraints make it much more rigid than  the measures constructed 
by means of the Caratheodory method (see \cite{Fed}). 
On the other hand it is related to the Minkowski content and 
the associated curvature measures. 

\vspace{0.2cm}
\noindent 
For a fractal set $F$  consider  those $s$ for which 
$\lim_{\lambda\to 0} (2\lambda)^s N_{\lambda}(F,B)$ is finite non-zero. 
If this set consists in a singleton, then call it the Hadwiger dimension 
of $F$ and the  above limit the Hadwiger $s$-measure of $F$.
This measure is actually supported on the ``frontier''
$\partial F$ of $F$.  Although it is not, in general, a bona-fide measure 
but only a pre-measure, there is a standard procedure for 
converting it  into a measure. 
Explicit computations for  De Rham curves show that these make sense  
for a large number of fractal curves. One might expect such 
measures be Lipschitz functions on the space of  measurable curves endowed 
with the Hausdorff metric.   
\end{remark}

\subsection{Curves of positive reach}
Federer introduced in \cite{Fed} subsets of positive reach 
in Riemannian manifolds. His definition extends immediately to Finsler 
manifolds and in particular to Minkowski spaces, as follows:  

\begin{definition}
The closed subset $A\subset R^n$ has positive reach if it admits a neighborhood 
$U$ such that for all $p\in U$ there exists a unique point $\pi(p)\in A$  
which is  the closest point of $A$ to $p$ i.e. such that 
$d_B(p,\pi(p))=d_B(p, A)$.  
\end{definition}
 
\vspace{0.2cm}
\noindent 
It is clear that convex sets and sets with boundary of class ${\mathcal C}^2$ 
have positive reach in the Euclidean space. 
A classical theorem of Motzkin characterized convex sets as those 
sets of positive reach in any Minkowski space whose unit disk $B$ is 
strictly convex and smooth (see \cite{Val}, Theorem 7.8, p.94).  
Moreover, 
Bangert characterized completely in \cite{Ban} 
the sets of positive reach in Riemannian manifolds, as the sub-level sets 
of functions $f$, admitting local charts $(U,\varphi_U:U\to \R^n)$ 
and ${\mathcal C}^{\infty}$ functions $h_U$ such that 
$(f+h_U)\circ \varphi_U^{-1}$ are convex functions.
Another characterization was recently obtained by Lytchak (\cite{Ly}), 
as follows. 
Subsets  $A$ of positive reach in Riemann manifolds are those which are 
locally convex with respect to some Lipschitz continuous Riemann 
metric on the manifold, and equivalently those for which the 
inner metric $d^A$ induced on $A$  by the Riemann distance  verifies 
the inequality 
\begin{equation}  d^A(x,y)\leq d(x,y)(1+C d(x,y)^2)\end{equation}
for any $x,y\in A$ with $d(x,y)\leq \rho$, for some constants 
$C, \rho >0$. 
Federer proved in \cite{Fed} that Lipschitz manifolds
of positive reach are ${\mathcal C}^{1,1}$ manifolds. This was further 
showed to hold true more generally for topological manifolds of positive 
reach (see  \cite{Ly}).  

\vspace{0.2cm}
\noindent 
On the other hand the sets of positive reach might depend on the 
specific Minkowski metric on $\R^n$. For instance if $B$ is a square in $\R^2$ 
then any other rectangle $F$  having an edge parallel to one of $B$ 
has not positive reach. In fact a point in a neighborhood of that edge 
has infinitely many closest points.

\begin{remark}
It seems that sets of positive reach are the same for a Riemannian 
metric on $\R^n$ and the Minkowski metric $d_B$ associated to a 
strictly convex  smooth $B$ (see also \cite{Val} for the extension 
of the Motzkin theorem to Minkowski spaces). 
\end{remark}

\vspace{0.2cm}
\noindent 
We will prove now the main theorem for sets of positive reach:  
\begin{proposition}\label{limreach}
If $\partial F$ is  a Lipschitz curve of positive reach  with respect to 
the Minkowski metric $d_B$ then 
$\lim_{\lambda\to 0}2\lambda N_\lambda(F,B)=p_B(\partial F)$.  
\end{proposition}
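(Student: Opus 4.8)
The plan is to reduce the counting problem to a one-dimensional measure estimate along $\partial F$ by exploiting the positive reach hypothesis to parametrize an outer tubular neighborhood of $\partial F$. Since $\partial F$ has positive reach with respect to $d_B$, there is a neighborhood $U$ on which the nearest-point projection $\pi\colon U\to\partial F$ is well-defined and single-valued; by Federer's theorem quoted above, $\partial F$ is in fact a ${\mathcal C}^{1,1}$ curve, so it has a Lipschitz unit tangent and hence (a.e.) a well-defined $B$-orthogonal outward direction. For small $\lambda$ the $B$-disk of radius $\lambda$ is a ``bead'' of Minkowski diameter $2\lambda$, and a packing of such beads touching $F$ from outside is essentially a packing of their contact points on $\partial F$ at pairwise $d_B$-distance at least $2\lambda$ — this is the heuristic ``$2\lambda N_\lambda \approx p_B(\partial F)$''. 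The work is to make the comparison precise in both directions.

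First I would prove the lower bound $\liminf_{\lambda\to0}2\lambda N_\lambda(F,B)\ge p_B(\partial F)$ by explicit construction: march along $\partial F$ in the Minkowski arclength parametrization, placing contact points $x_0,x_1,x_2,\dots$ with $d_B(x_{i-1},x_i)$ just over $2\lambda$ (say $2\lambda(1+\eta)$), and at each $x_i$ hang a homothetic copy of $B$ of ratio $\lambda$ centered at $x_i + \lambda\,\nu_B(x_i)$, where $\nu_B$ is the outward $B$-normal. Two consecutive beads have centers at Minkowski distance $\ge d_B(x_{i-1},x_i) - (\text{second-order defect}) \ge 2\lambda$ once $\lambda$ is small, using the ${\mathcal C}^{1,1}$/positive-reach estimate (the Lytchak-type inequality $d^A(x,y)\le d(x,y)(1+Cd(x,y)^2)$ quoted above) to control how the normals wobble; non-consecutive beads are disjoint because their contact points are far apart on a curve of positive reach, so the beads stay in the embedded tube. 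This yields $N_\lambda \ge p_B(\partial F)/(2\lambda(1+\eta)) - O(1)$, and letting $\lambda\to0$ then $\eta\to0$ gives the bound. This is the ``sliding convex beads along curves with positive reach'' step advertised in the abstract.

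Next the upper bound $\limsup_{\lambda\to0}2\lambda N_\lambda(F,B)\le p_B(\partial F)$. Given any packing realizing $N_\lambda$, each bead $B_i$ meets $\partial F$, so pick a contact point $x_i\in B_i\cap\partial F$. Using positive reach I would show that once $\lambda$ is small the contact points $x_i$ are genuinely spread out along $\partial F$: if $d_B(x_i,x_j)$ were too small, the two beads, both hanging on the (nearly flat, ${\mathcal C}^{1,1}$) arc between them on the correct side, would have overlapping interiors — again a second-order estimate on the tube. More carefully, project each bead to $\partial F$ via $\pi$; the projected images $\pi(B_i)$ are subarcs of $\partial F$ whose Minkowski lengths are each $\ge 2\lambda(1-o(1))$ and which have pairwise disjoint interiors (disjointness of the $B_i$ plus injectivity of $\pi$ on the tube), so summing $p_B$-length gives $2\lambda N_\lambda(1-o(1)) \le p_B(\partial F)$. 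Combining the two bounds proves the proposition.

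The main obstacle I expect is the non-strict-convexity of $B$: when $B$ is not strictly convex the $B$-orthogonal direction $\nu_B$ is set-valued (cf.\ Lemma~\ref{ang2} and the rectangle example), geodesics are non-unique, and ``hanging a bead at $x_i$'' is ambiguous, so the disjointness estimates for consecutive beads must be done with the worst-case choice of contact normals; controlling that uniformly — i.e.\ getting the angle $\alpha(B)<\pi$ of Lemma~\ref{ang2} to do the geometric bookkeeping that a clean normal would do in the Euclidean case — is the delicate point. A secondary technical nuisance is that positive reach is only a local hypothesis, so I would cover $\partial F$ by finitely many arcs each lying in its own embedded tube and sum the local estimates, checking that the $O(1)$ boundary-overlap terms between adjacent arcs are negligible after multiplying by $\lambda$.
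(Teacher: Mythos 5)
Your lower bound by explicit construction is sound in spirit: marching along $\partial F$ and hanging beads on the outward $B$-normal does give a legitimate packing, and the Lytchak-type chord-versus-arclength estimate you invoke (together with the observation that two $\lambda$-beads are disjoint iff their centers are at $d_B$-distance at least $2\lambda$) makes the consecutive and non-consecutive disjointness checks go through for small $\lambda$. That part of the argument is simpler and more direct than the paper's sliding construction of almost complete necklaces (Proposition~\ref{necklace} and Proposition~\ref{gap}), and it is a genuinely different route to the inequality $\liminf 2\lambda N_\lambda \geq p_B(\partial F)$.

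The upper bound, however, has a real gap. You project each bead $B_i$ to the arc $\pi(B_i)\subset\partial F$ and assert that these arcs have pairwise disjoint interiors ``by disjointness of the $B_i$ plus injectivity of $\pi$ on the tube.'' But $\pi$ collapses the $B$-normal direction, and the shadow of $\lambda B$ on $\partial F$ along that normal generally has $d_B$-length \emph{strictly greater} than $2\lambda$: if $n$ is the $B$-normal to the tangent direction $u_\ell$, then $n\perp_B u_\ell$ by definition, but Birkhoff orthogonality is not symmetric, so $u_\ell\perp_B n$ can fail; this is exactly what governs the shadow length, and it equals $2\lambda$ only when $u_\ell\perp_B n$ holds — i.e.\ only when $B$ is a Radon curve. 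For a generic centrally symmetric oval $B$ the shadow exceeds $2\lambda$, so two tangent beads whose contact points are at $d_B$-distance $2\lambda$ already have overlapping projections, and the inequality $\sum_i p_B(\pi(B_i))\leq p_B(\partial F)$ is false. The paper avoids this by projecting to the outer parallel curve $\partial^+F_\lambda$ rather than to $\partial F$ itself: the bead \emph{centers} all lie on $\partial^+F_\lambda$, consecutive centers are at $d_B$-distance at least $2\lambda$, so the inscribed polygon $P(\lambda)$ satisfies $2\lambda N_\lambda\leq p_B(P(\lambda))\leq p_B(\partial^+F_\lambda)$, and one then needs the Steiner-type inequality $p_B(\partial^+F_\lambda)\leq p_B(\partial F)+\lambda p_B(\partial B)$ of Proposition~\ref{leng}. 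That parallel-curve step is precisely the piece of geometric content your projection-to-$\partial F$ argument tries to skip, and it cannot be skipped unless $B$ is assumed Radon.
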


\begin{proof} We start by reviewing a number of notations and concepts. 
Let $A_{<\varepsilon}$ (respectively $A_{\leq \varepsilon}$ and 
$A_{\varepsilon}$) denote 
the set of points at distance less than (respectively less or equal than, 
or equal to) $\varepsilon$ from $A$, in the metric $d_B$. 

\vspace{0.2cm}
\noindent 
Recall from \cite{Fed} the following definition: 
\begin{definition} 
The reach $r(A)$ of the set $A$ is  defined to be the larger 
$\varepsilon$ (possibly $\infty$) 
such that each point $x$ of the open neighborhood  
$A_{<\varepsilon}$ has a unique $\pi(x)\in A$ 
realizing the distance from $x$ to $A$.  
\end{definition}

\vspace{0.2cm}
\noindent 
Assume from now that $F$ is a planar domain such that  
$\partial F$ is a Lipschitz curve which  has positive reach. 
We will consider henceforth only those values 
of $\lambda>0$ for which  $2\lambda < r(\partial F)$.

\begin{definition}
Elements of  $A_{F,B, \lambda}$ are called {\em beads} (or $\lambda$-beads 
if one wants to specify the value of $\lambda$) and a configuration 
of $\lambda$-beads with disjoint interiors is called a 
$\lambda$-{\em necklace}. 
The necklace is said to be {\em complete} (respectively {\em almost complete}) 
if all (respectively all but one) pairs of consecutive beads 
have a common point. 
A necklace is {\em maximal} if it contains $N_{\lambda}(F,B)$ beads. 
\end{definition}

\vspace{0.2cm}
\noindent 
The main step in proving  the proposition is to establish first: 
\begin{proposition}\label{necklace}
If $\partial F$ is Lipschitz  and has positive reach 
then there exist maximal almost complete 
$\lambda$-necklaces for any $\lambda <\frac{1}{2}r(\partial F)$. 
\end{proposition}

\vspace{0.2cm}
\noindent 
Consider  now a maximal almost complete necklace and 
$P(\lambda)$ be the associated polygon whose vertices are the centers of 
the beads. Let $a$ and $c$ denote the 
pair of consecutive vertices of $P(\lambda)$ realizing the 
maximal distance among consecutive vertices.   
These are the centers of those beads $A$ and $C$  of the necklace 
which might not touch each other. 
The distance between the beads $A$ and $C$ is called 
the {\em gap} of the almost complete necklace.

\begin{proposition}\label{gap}
A maximal almost complete $\lambda$-necklace of  the simple 
closed curve $\partial F$ whose  reach is greater than $2\lambda$  
has gap smaller than $3\lambda$. 
Consequently the perimeter $p_B(P(\lambda))$ of $P(\lambda)$ satisfies 
the  following inequalities: 
\begin{equation}  0\leq p_B(P(\lambda))-2\lambda N_{\lambda}(F,B) <3\lambda \end{equation}
\end{proposition}

\vspace{0.2cm}
\noindent 
Observe that the set $(\partial F)_{\lambda}$ has two components, 
namely the one contained in the interior of $F$ and that exterior to $F$. 
We set $\partial^+F_{\lambda}=(\partial F)_{\lambda}\cap (\R^2\setminus F)$. 
Moreover, it is easy to see that  
$\partial^+F_{\lambda}=\partial (F_{\leq \lambda})$.

\begin{proposition}\label{leng}
Suppose that $F$ is a planar domain whose boundary $\partial F$ is rectifiable
(without assuming that the reach is positive).  
Then for any $\lambda>0$ we have: 
\begin{equation}  p_B(\partial^+ F_{\lambda})\leq p_B(\partial F)+ \lambda p_B(\partial B)\end{equation}
\end{proposition}

\vspace{0.2cm}
\noindent {\em Proof of Proposition \ref{limreach} assuming 
Propositions \ref{necklace}, \ref{gap} and \ref{leng}}. 
Recall now that $P(\lambda)$ is a polygon with 
$N_{\lambda}$ vertices inscribed 
in $\partial^+F_{\lambda}$. Each pair of consecutive vertices of the 
polygon determines an oriented arc of $\partial^+F_{\lambda}$. Furthermore, 
each edge corresponds to a pair of consecutive beads and thus  
the arcs associated to different edges of $P(\lambda)$ do not overlap.  
We will show later also that $\partial^+F_{\lambda}$ is connected. 
These imply that the perimeter of $P(\lambda)$ is bounded from above 
by the length of $\partial^+F_{\lambda}$.  Therefore we have the inequalities:  
\begin{equation} p_B(P(\lambda))\leq p_B(\partial^+F_{\lambda}) \leq p_B(\partial F) + \lambda p_B(\partial B)\end{equation}
Let $\lambda$ goes to $0$. We derive that:  
\begin{equation} \lim_{\lambda\to 0} p_B(P(\lambda)) \leq p_B(\partial F)\end{equation}

\vspace{0.2cm}
\noindent 
On the other hand recall that $P(\lambda)$ converges to $\partial F$ 
since the distance between consecutive vertices is bounded by $2\lambda$. 
Using the fact that the  
Lebesgue-Minkowski length is 
lower semi-continuous (see \cite{Ce}) we find that: 
\begin{equation}  \lim_{\lambda \to 0}\inf p_B(P(\lambda)) \geq p_B(\partial F)\end{equation}
The two inequalities above imply that $\lim_{\lambda\to 0} p_B(P(\lambda))$ 
exists and is equal to $p_B(\partial F)$. In particular
\begin{equation}  \lim_{\lambda\to 0} 2\lambda N_{\lambda}(F,B)=\lim_{\lambda \to 0} p_B(P(\lambda)) = p_B(\partial F)\end{equation}
and Proposition \ref{limreach} is proved. 
\end{proof}

\begin{remark}
One can also consider packings with disjoint homothetic 
copies of $B$ lying in $F$ and having a common point with 
the complement $\R^2-{\rm int}(F)$. Then a similar asymptotic result 
holds true. 
\end{remark}

\subsection{Proof of Proposition \ref{necklace}}
Consider a maximal  necklace and join consecutive centers of beads 
by segments to obtain a polygon $P(\lambda)$. 
We want to slide the beads  along $\partial F$ so that 
all but at one pairs of consecutive beads  
have a common boundary point.  Observe that $P(\lambda)$ is a polygon with 
$N_{\lambda}=N_{\lambda}(F,B)$ vertices inscribed 
in $\partial^+F_{\lambda}$.

\vspace{0.2cm}
\noindent 
Let $\pi:\partial^+F_{\lambda}\to \partial F$ be the map 
that associates to  the point $x$ the closest point $\pi(x)\in \partial F$. 
Since $\lambda< r(\partial F)$ the map $\pi$ is well-defined
and continuous. 

\begin{lemma}\label{surj}
The projection map $\pi:\partial^+F_{\lambda}\to \partial F$ is 
surjective. 
\end{lemma}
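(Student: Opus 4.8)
The plan is to show that every point $q\in\partial F$ is the image $\pi(x)$ of some $x\in\partial^+F_\lambda=\partial(F_{\leq\lambda})$. The natural candidate for $x$ is the point obtained by travelling from $q$ a $d_B$-distance $\lambda$ in the outward normal direction to $\partial F$; the content of the lemma is that this point actually lies on $\partial^+F_\lambda$ and that its nearest point in $\partial F$ is exactly $q$. First I would recall that since $\lambda<r(\partial F)$, the nearest-point projection $\pi$ is defined and continuous on the closed tubular neighborhood $(\partial F)_{\leq\lambda}$, and that for each $q\in\partial F$ the set $\pi^{-1}(q)$ is a segment (in the $d_B$ sense) of the $B$-normal line at $q$, of total $d_B$-length $2\lambda$, one half inside $F$ and one half outside (this is the standard fibration-of-the-tube picture for positive reach, valid in the Minkowski setting because geodesics are segments). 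The outward endpoint of that segment, call it $x_q$, satisfies $d_B(x_q,\partial F)=\lambda$, hence $x_q\in(\partial F)_\lambda$, and it lies in $\R^2\setminus F$, so $x_q\in\partial^+F_\lambda$, with $\pi(x_q)=q$ by construction.

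The cleaner way to package this, avoiding a hands-on discussion of $B$-normals, is via the map $\Phi\colon\partial F\to\R^2$ sending $q$ to the (unique) point of $\partial^+F_\lambda$ on the outward ray from $q$ realizing distance $\lambda$; equivalently $\Phi=(\pi|_{\partial^+F_\lambda})^{-1}$ once we know that restriction is a bijection. So I would instead argue directly that $\pi\colon\partial^+F_\lambda\to\partial F$ is a continuous surjection by a degree/connectedness argument: $\partial^+F_\lambda=\partial(F_{\leq\lambda})$ is a simple closed curve (it is the level set $(\partial F)_\lambda$ restricted to the exterior component, which by Corollary \ref{lipman} and the positive-reach hypothesis — forcing the single component exterior to $F$ to be a Lipschitz Jordan curve — is connected), and $\pi$ extends continuously to a map $(\partial F)_{\leq\lambda}\to\partial F$ which is the identity on $\partial F$. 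Then $\partial F$ is a deformation retract of the annular region between $\partial F$ and $\partial^+F_\lambda$ via the straight-line homotopy $H_t(p)=(1-t)p+t\pi(p)$, which stays in $(\partial F)_{\leq\lambda}$ by convexity of $d_B$-balls; restricting to the boundary circle $\partial^+F_\lambda$ shows $\pi|_{\partial^+F_\lambda}$ is homotopic in $\partial F$ to a homeomorphism onto $\partial F$ (the retraction of the outer circle), hence has degree $\pm1$ and is therefore surjective.

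Concretely I would write: fix $q\in\partial F$; the inward-pointing and outward-pointing segments of $\pi^{-1}(q)$ each have $d_B$-length $\lambda$ because $\lambda<r(\partial F)$ (if the outward segment were shorter, some exterior point near $q$ would have two nearest points, contradicting the reach bound; similarly inward, using that $F$ is a topological disk so $\partial F$ separates). The outward endpoint $x$ then satisfies $d_B(x,\partial F)=\lambda$ and $x\notin F$, i.e. $x\in\partial^+F_\lambda$, with $\pi(x)=q$. This gives surjectivity immediately and more cheaply than the homotopy argument, so that is the route I would actually take, keeping the topological remark in reserve for the separate claim (used later) that $\partial^+F_\lambda$ is connected.

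The main obstacle I anticipate is justifying that the nearest-point fiber over each $q$ genuinely reaches $d_B$-distance $\lambda$ on the outside — that is, ruling out the pathology where $\partial^+F_\lambda$ "pinches" and the outward normal ray leaves $F_{\leq\lambda}$ before distance $\lambda$, or where a point of $\partial F$ only gets "covered" from the inside. This is exactly where positive reach does the work: on $(\partial F)_{<r(\partial F)}$ the map $\pi$ is a well-defined continuous retraction whose fibers are connected arcs of geodesics through $q$, and since $2\lambda<r(\partial F)$ the whole tube of radius $\lambda$ (both sides) is inside the injectivity region, so each fiber's outward branch has full length $\lambda$. Making the "both sides of $\partial F$" statement precise — that the fiber is an honest arc crossing $\partial F$ transversally with $\lambda$ of length on each side — uses that $F$ is a topological disk, hence $\partial F$ is a Jordan curve locally separating the tube into an inside and an outside piece; this is the only place the disk hypothesis enters here.
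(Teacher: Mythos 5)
Your approach is genuinely different from the paper's. The paper argues by contradiction: assuming the image of $\pi$ is a proper arc of $\partial F$, it looks at a right endpoint $\pi(s)$ of that arc, takes $t$ slightly to the right of $s$ on $\partial^+F_{\lambda}$, and analyzes the two projection segments $|s\pi(s)|$, $|t\pi(t)|$: if they meet, a triangle-inequality comparison produces a point of $\partial^+F_\lambda$ at distance $<\lambda$ from $\partial F$ or a double nearest point; if they are disjoint, the Jordan curve theorem traps the complementary arc of $\partial F$. No normal directions and no structure theory for the fibers of $\pi$ enter. You instead try to construct a preimage directly by pushing each $q\in\partial F$ a distance $\lambda$ outward along the fiber $\pi^{-1}(q)$.

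There is a real gap at exactly the point you flag as ``the main obstacle.'' Your claim is that the outward branch of $\pi^{-1}(q)$ has full $d_B$-length $\lambda$, justified by the parenthetical ``if the outward segment were shorter, some exterior point near $q$ would have two nearest points.'' That does not follow: a point $y$ on the outward ray beyond the end of the fiber has $d_B(y,\partial F)<\lambda<r(\partial F)$ and hence a \emph{unique} nearest point, just not equal to $q$; no immediate double nearest point is produced, and passing to the supremum only yields a sequence of nearest points $c_n\to q$ with $c_n\neq q$, which is not a contradiction by itself. The correct statement — that once the fiber over $q$ is nontrivial it extends to the full radius $r(\partial F)$ — is Federer's Theorem 4.8(6) in the Euclidean case, and its proof relies on the inner-product ``cone condition'' 4.8(4), namely $\langle y-a, x-a\rangle\le \tfrac{1}{2}|y-a|^2 d(x,a)/r$ for $y\in A$. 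This argument is intrinsically Euclidean and does not transfer to an arbitrary Minkowski norm $d_B$, which lacks an inner product; the paper's contradiction proof is designed precisely to avoid relying on this piece of Federer structure theory in the Minkowski setting. Separately, for a merely Lipschitz $\partial F$ the ``outward $B$-normal at $q$'' need not be well-defined, so even the starting point of your construction requires justification (the paper's argument bypasses normals entirely).

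Your fallback degree/connectedness argument is also circular as written: it presupposes that $\partial^+F_\lambda$ is a single Lipschitz Jordan curve bounding an annulus with $\partial F$. In the paper that fact (Lemma \ref{curve}) is derived from Lemma \ref{connected}, which in turn invokes Lemma \ref{surj}. Corollary \ref{lipman} only gives the Lipschitz $1$-manifold statement for \emph{almost every} $r$, not for the specific $\lambda$ at hand, and does not by itself give connectedness. So neither route in the proposal closes the argument in the Minkowski setting without substantial additional work.
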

\begin{proof}
Assume the contrary, namely that $\pi$ would not be surjective. 
Continuous maps between compact Hausdorff spaces are closed so that 
$\pi$ is closed. Moreover each connected component of 
$\partial^+F_{\lambda}$ is sent by $\pi$ into a closed 
connected subset of $\partial F$. 

\vspace{0.2cm}
\noindent 
If some image component consists of one point then $\partial^+F_{\lambda}$ 
is a metric circle centered at that point and thus $\partial F$ has 
a point component, which  is a contradiction. 

\vspace{0.2cm}
\noindent 
Give these boundary curves the clockwise orientation.
The orientation induces a cyclic ordering on each component. Moreover,  
this cyclic order restricts to a linear order on any  proper 
subset, in particular on small neighborhoods of a point. 
When talking about left (or right) position with respect to some 
point we actually consider points which are smaller (or greater) than 
the respective point with respect to the linear order defined in a 
neighborhood of that point. 
 
 \vspace{0.2cm}
\noindent 
Let assume that some image component is a proper arc within $\partial F$. 
This arc has the right boundary point $\pi(s)$  and there is no other 
point in the image sitting to the right of $\pi(s)$, in a small neighborhood 
of $\pi(s)$. Lt $s'$ be maximal such that  
$\pi(t)=\pi(s)$ for all $t$ in the right of $s$ 
in the interval from $s$ to $s'$. As we saw above this is a proper subset 
of $\partial^+F_{\lambda}$. 

\vspace{0.2cm}
\noindent
Choose  then some 
$t\in \partial^+F_{\lambda}$  which is nearby $s'$ and slightly to 
the right of $s'$.  Therefore, we have $\pi(s)\neq \pi(t)$. 
By hypothesis $\pi(t)\in \partial F$ should sit  slightly to the 
left  and closed-by to $\pi(s)$, by  the continuity of  the map $\pi$. 

\vspace{0.2cm}
\noindent 
There are several possibilities: 
\begin{enumerate}
\item  the segments $|s\pi(s)|$ and $|t\pi(t)|$ intersect in a point $u$  
(see case 1. in the figure below). 

\vspace{0.2cm}
\noindent 
If $d_B(s,u)< d_B(t,u)$ then $d_B(s,\pi(t))\leq 
d_B(s,u)+d_B(u,\pi(t))< d_B(t,u)+d_B(u,\pi(t))=\lambda$ 
and thus $d_B(s,\partial F) <\lambda$ contradicting the fact that 
$s\in \partial^+F_{\lambda}$. 

\vspace{0.2cm}
\noindent 
If $d_B(s,u)> d_B(t,u)$ then $d_B(u, \pi(s)) < d_B(u, \pi(t)$ and hence 
$d_B(t, \pi(s))\leq d_B(t,u)+d_B(u,\pi(s)) 
< d_B(t,u)+ d_B(u, \pi(t)=\lambda$, leading to a contradiction again. 

\vspace{0.2cm}
\noindent 
Suppose now that $d_B(s,u)= d_B(t,u)$. The  previous argument shows that 
$d_B(s,\pi(t))\leq \lambda$.  In order to avoid the contradiction above 
the inequality cannot be strict, so that 
$d_B(s,\pi(t))=\lambda=d_B(s, \partial F)$.  This means  
that there are two points on $\partial F$ realizing the distance to 
$s$. This contradicts the fact that  the reach of $\partial F$ 
was  supposed to be larger than $\lambda$.  

\item The segments $|s\pi(s)|$ and $|t\pi(t)|$ have empty intersection.

\begin{enumerate}
\item Moreover, the segments $|s'\pi(s)|$ and $|t\pi(t)|$ are disjoint 
(see the case 2.a. on the figure below).

\begin{center}
\includegraphics{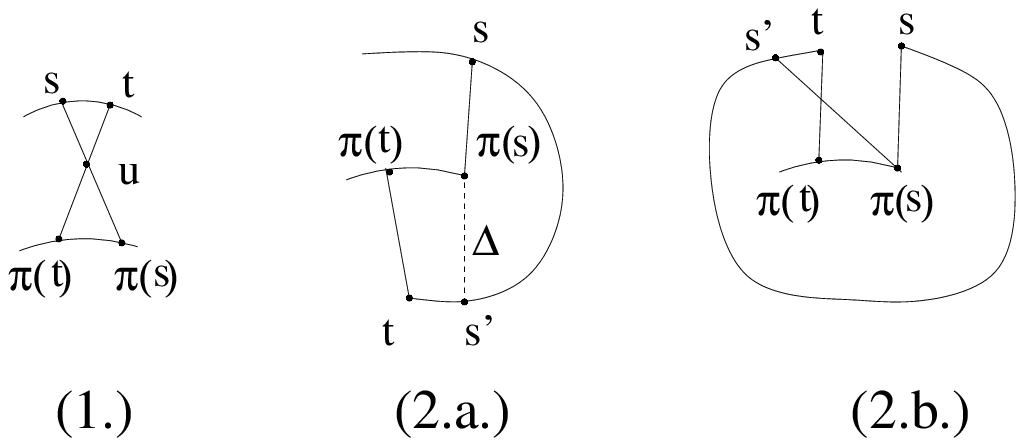}
\end{center}
  
\vspace{0.2cm}
\noindent 
In this situation we observe that the arc of metric circle $ss'$, 
the arc of $\partial F$  going clock-wisely from $\pi(t)$ to $\pi(s)$ and 
the segments $|s\pi(s)|$ and $|t\pi(t)|$ bound a domain $\Delta$  
in the plane. 
The arc of $\partial F$ which is complementary to the  clockwise arc 
$\pi(t)\pi(s)$ joins $\pi(s)$ and $\pi(t)$ and thus it has to cut 
at least once more the boundary of the domain $\Delta$. However this 
curve cannot intersect: 
\begin{enumerate}
\item  neither the arc $\pi(t)\pi(s)$, since $\partial F$ is a simple 
curve; 
\item nor the segments $|s\pi(s)|$ and $|t\pi(t)|$, since it would imply that 
there exist points in $\partial F$ at distance smaller than $\lambda$ 
on $\partial^+F_{\lambda}$. 
\item nor the arc of metric circle $ss'\subset \partial^+F_{\lambda}$, 
since the distance between $\partial^+F_{\lambda}$ and $\partial F$ 
is $\lambda >0$. 
\end{enumerate} 
Thus each alternative above leads to a contradiction. 
\item The segments $|s'\pi(s)|$ and $|t\pi(t)|$ are disjoint
(case 2.b. in the figure above). 

\vspace{0.2cm}
\noindent 
Here we conclude as in the first case by using $s'$ in the place of $s$ and  
get a contradiction again.  
\end{enumerate}
\end{enumerate}

\vspace{0.2cm}
\noindent 
Therefore our assumption was false so that the image component is 
all of $\partial F$. Notice that we actually proved that $\pi$ is open. 
\end{proof}
\begin{lemma}\label{connected}
The fibers of the projection 
map $\pi:\partial^+F_{\lambda}\to \partial F$  
are either points or  connected arcs. In particular 
$\partial^+F_{\lambda}$ is connected. 
\end{lemma}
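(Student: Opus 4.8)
The plan is to analyze the projection map $\pi:\partial^+F_{\lambda}\to \partial F$ using the unique-nearest-point property guaranteed by $\lambda<r(\partial F)$, and show that no fiber can fail to be connected. First I would fix a point $q\in\partial F$ and consider its fiber $\pi^{-1}(q)\subset\partial^+F_{\lambda}$. Every point $x$ in this fiber satisfies $d_B(x,q)=\lambda$ and $d_B(x,\partial F)=\lambda$; moreover, the open metric ball $\mathrm{int}\,B(x,\lambda)$ must miss $\partial F$ entirely (otherwise $x\notin\partial^+F_{\lambda}$), and it lies on the exterior side of $\partial F$. So the fiber is contained in the metric circle $\partial B(q,\lambda)$ and, in fact, in the portion of that circle lying on the outer side of $\partial F$. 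I would then observe that $\partial B(q,\lambda)$ is itself a simple closed Lipschitz curve (by Theorem~\ref{lip} applied to $M=\{q\}$, or directly since it is the scaled translate of $\partial B$), so its intersection with the exterior region $\R^2\setminus F$ is a union of arcs, and the fiber sits inside this union.

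Next I would argue that the fiber is exactly one such arc, i.e.\ that it is connected. Suppose for contradiction that $\pi^{-1}(q)$ meets two distinct arcs $a_1,a_2$ of $\partial B(q,\lambda)\cap(\R^2\setminus F)$, picking $x_1\in a_1$ and $x_2\in a_2$. The complementary arcs of $\partial B(q,\lambda)$ between $a_1$ and $a_2$ contain points of $F$ (indeed points of $\partial F$, since the endpoints of each $a_i$ lie on $\partial F$). Using the surjectivity and openness of $\pi$ established in Lemma~\ref{surj}, together with the cyclic/linear ordering on $\partial F$ and on $\partial^+F_{\lambda}$ introduced in its proof, the segment of $\partial F$ between the relevant footpoints must be covered by $\pi$-images coming from the arc of $\partial^+F_{\lambda}$ joining $x_1$ to $x_2$; but the segments $|x\,\pi(x)|$ for $x$ ranging over that arc, all of length $\lambda$ and all emanating to the exterior side, would have to sweep past $q$ twice, which forces either a self-intersection of $\partial F$ or a point of $\partial F$ at distance strictly less than $\lambda$ from some $x\in\partial^+F_\lambda$ — the same type of contradiction exploited in the proof of Lemma~\ref{surj}. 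I expect this sweeping/ordering argument to be the main obstacle: one must be careful that the arc of $\partial^+F_\lambda$ between $x_1$ and $x_2$ really does project onto an arc of $\partial F$ straddling $q$, which uses continuity of $\pi$, the fact that $\partial^+F_\lambda=\partial(F_{\le\lambda})$ is a genuine curve, and the positive-reach bound.

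Finally, connectedness of $\partial^+F_{\lambda}$ itself follows formally: $\pi$ is a continuous surjection by Lemma~\ref{surj} onto the connected space $\partial F$, and all its fibers are connected; a continuous closed surjection with connected fibers onto a connected space, whose domain is the curve $\partial(F_{\le\lambda})$, has connected domain. More concretely, since $\partial^+F_{\lambda}$ is a compact $1$-manifold-with-possible-singularities equal to $\partial(F_{\le\lambda})$, it is a disjoint union of simple closed curves; if it had two components, each would be sent by $\pi$ onto a closed connected subarc (or point) of the simple closed curve $\partial F$, and by the fiber analysis these two images could overlap only along a set over which the fiber splits into two pieces — contradicting fiber-connectedness — so the images are essentially disjoint closed arcs covering $\partial F$, which is impossible for a single connected $\partial F$ unless there is only one component. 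I would phrase the last step cleanly by noting that $\pi$ restricted to any single component is already surjective onto $\partial F$ by the argument of Lemma~\ref{surj} (which only used local data), and two distinct components would then give, over a generic $q$, a disconnected fiber, the desired contradiction.
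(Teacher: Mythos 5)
Your route differs from the paper's in one essential respect, and that difference creates a real gap. The paper works intrinsically on $\partial^+F_\lambda$: taking $s_1,s_2$ with $\pi(s_1)=\pi(s_2)=q$ and $\pi$ not constant on the clockwise arc $s_1s_2$, it uses the no-crossing argument from the proof of Lemma~\ref{surj} to conclude that $\pi$ moves monotonically clockwise along that arc, hence wraps once around $\partial F$; the same crossing argument, applied to the two segments $|s_1q|$ and $|s_2q|$, then forces $\pi\equiv q$ on the complementary arc $s_2s_1$ of $\partial^+F_\lambda$. The fiber therefore \emph{is} an arc of $\partial^+F_\lambda$, manifestly connected. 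You instead pass to the metric circle $\partial B(q,\lambda)$ and note that $\pi^{-1}(q)\subset\partial B(q,\lambda)\cap(\R^2\setminus F)$, a union of circular arcs, and you rule out the fiber meeting two distinct such arcs. But that does not yield connectedness: a single arc $a$ of $\partial B(q,\lambda)\cap(\R^2\setminus F)$ is \emph{not} a priori contained in $\partial^+F_\lambda$. A point $z\in a$ satisfies $d_B(z,q)=\lambda$ but may well have $d_B(z,\partial F)<\lambda$ (a different part of $\partial F$ comes closer), in which case $z\notin\partial^+F_\lambda$ and the fiber $\pi^{-1}(q)=a\cap\partial^+F_\lambda$ could be a disconnected proper subset of $a$. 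Your sweeping argument, built around the complementary arcs of $\partial B(q,\lambda)$ that contain points of $F$, says nothing about this within-$a$ case. The paper's monotonicity on $\partial^+F_\lambda$ is precisely what closes this hole, so either reproduce that argument or supply a separate reason why $a\subset\partial^+F_\lambda$ once its endpoints of the fiber are both mapped to $q$.

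Your final step for connectedness of $\partial^+F_\lambda$ lands in the right place but needs to be phrased with care to avoid circularity: the within-fiber analysis only applies to two points lying in a common component (you explicitly use ``the arc of $\partial^+F_\lambda$ joining $x_1$ to $x_2$''), yet the multi-component scenario is exactly what would place fiber points in distinct components. The clean statement, implicit in the paper, is the one you arrive at last: by the proof of Lemma~\ref{surj} each component of $\partial^+F_\lambda$ already surjects onto $\partial F$, while the first part of the lemma shows each fiber is a single arc, hence lies in a single component; two or more components are therefore impossible. Saying this directly (and for every $q$, not a ``generic'' one) is preferable.
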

\begin{proof}
Let $\pi(s_1)=\pi(s_2)$ for two distinct points $s_1$ and $s_2$ and 
assume that $\pi$ is not constant on the clockwise arc $s_1s_2$. 
Pick up some $v$ in the arc $s_1s_2$.  
According to the proof of the previous lemma 
we cannot have $\pi(v)$ sitting to the left of $\pi(s_1)$, for $v$ near $s_1$. 
Thus $\pi(v)$ sits in the right of $\pi_(s_1)$. 
Moreover, if $w$ lies between $v$ and $s_2$ the same argument shows that 
$\pi(w)$ sits in the right of $\pi(v)$. Consequently the 
image by $\pi$ of the arc  $s_1s_2$ covers completely $\partial F$ and 
the situation is that from the figure below. 

\begin{center}
\includegraphics{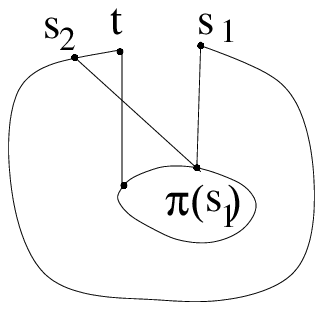}
\end{center}

\vspace{0.2cm}
\noindent 
Take now any $t$ in the  complementary arc $s_2s_1$. 
If $\pi(t)\neq\pi(s_1)$ then $|t\pi(t)|$ intersects either 
$|s_1\pi(s_1)|$ or else $|s_2\pi(s_2)|$, leading to a contradiction   
as in the proof of the previous lemma. The lemma follows. 
\end{proof}

\vspace{0.2cm}
\noindent 
We will need to have informations about the rectifiability of 
the set $\partial^+F_{\lambda}$, as follows: 

\begin{lemma}\label{curve}
If $0< \lambda< r(\partial F)$ then 
$\partial^+F_{\lambda}$ is a Lipschitz curve and in particular a 
${\mathcal C}^{1,1}$ simple closed curve. 
\end{lemma}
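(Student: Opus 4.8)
The plan is to establish that $\partial^+F_\lambda = \partial(F_{\le\lambda})$ is a Lipschitz curve by producing, near each of its points, a cone of uniform aperture avoiding the set, exactly in the spirit of the level-set argument of Section 2 but now exploiting the positive reach hypothesis rather than a largeness-of-$r$ hypothesis. Recall that for $0<\lambda<r(\partial F)$ the nearest-point projection $\pi:\partial^+F_\lambda\to\partial F$ is well-defined and continuous, and by Lemmas \ref{surj} and \ref{connected} it is surjective with connected (point or arc) fibers, so $\partial^+F_\lambda$ is a connected, compact, locally connected set; the remaining task is purely the local Lipschitz graph property.

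First I would fix a point $x\in\partial^+F_\lambda$ with projection $q=\pi(x)\in\partial F$, so $d_B(x,q)=\lambda$ and the open ball $B(x,\lambda)$ misses $\partial F$ (indeed misses $F$, since $x\notin F$ and $\partial^+F_\lambda=\partial(F_{\le\lambda})$). The key geometric input is the analogue of Lemma \ref{ang2}: if $y$ is another point of $\partial^+F_\lambda$ close to $x$, then $y\in\overline{B(y,\lambda)}$ and there is a point $z$ of $\partial F$ on the ball $\partial B(y,\lambda)$; since $z$ also lies outside $\mathrm{int}\,B(x,\lambda)$, Lemma \ref{ang2} gives $\widehat{xyz}\le\alpha(B)<\pi$. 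Now I use positive reach to control the direction $|yz$ relative to $|xq$: because $\lambda<r(\partial F)$, the projection $\pi$ is not just continuous but, on the $\lambda$-level set of a positive-reach set, the ``outer normal'' directions vary continuously — concretely, $z=\pi(y)$ and as $y\to x$ we have $\pi(y)\to q$ and the unit vector from $y$ to $\pi(y)$ tends to the unit vector from $x$ to $q$. Hence for $y$ in a small enough neighborhood of $x$ the angle between $|yz$ and $|xq$ is less than any prescribed $\varepsilon$, which plays here the role that Lemma \ref{ang1} played in Section 2.

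Combining these two facts exactly as in the proof of Theorem \ref{lip}: for $\varepsilon$ small enough that $\alpha(B)+\varepsilon<\pi$, every nearby $y\in\partial^+F_\lambda$ makes an angle at least $\pi-\alpha(B)-\varepsilon=:\beta>0$ between the half-lines $|xq$ and $|xy$, so $\partial^+F_\lambda$ meets the cone $K(x,\beta/2,|cx)$-type double cone of axis the common (limiting) normal direction and half-aperture $\beta/2$ only at $x$, locally. Translating this cone condition to every point of a small patch $U\subset\partial^+F_\lambda$ shows $U$ is the graph of a function of one variable with Lipschitz constant $1/\tan(\beta/2)$; by compactness $\partial^+F_\lambda$ is covered by finitely many such patches, so it is a Lipschitz curve, and being connected with point-or-arc fibers under the monotone map $\pi$ onto the simple closed curve $\partial F$, it is a simple closed Lipschitz curve. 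Finally, Federer's theorem (quoted in the text: Lipschitz manifolds of positive reach are $\mathcal C^{1,1}$) — note $\partial^+F_\lambda$ inherits positive reach, being $\partial(F_{\le\lambda})$ with $F_{\le\lambda}$ a closed set whose complement's reach toward it is bounded below because $\lambda<r(\partial F)$ — upgrades it to a $\mathcal C^{1,1}$ simple closed curve.

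The main obstacle will be making rigorous the continuity of the outer-normal direction along $\partial^+F_\lambda$, i.e.\ that $y\mapsto(\pi(y),\,\text{direction of }|y\pi(y))$ is continuous, which is where positive reach is genuinely used; in the Euclidean setting this is classical (Federer), and for the Minkowski metric $d_B$ one argues that $\pi$ is continuous (already known) and that the displacement vector $y-\pi(y)$ has $d_B$-norm exactly $\lambda$ and varies continuously, so its direction does too — a short compactness/limit argument, but one that must be stated carefully when $B$ is not strictly convex since then the nearest-point direction need not be unique a priori; however on $\partial^+F_\lambda=\partial(F_{\le\lambda})$ with $\lambda<r(\partial F)$ uniqueness of the nearest point of $\partial F$ forces $y-\pi(y)$ to be a well-defined continuous vector-valued function, which resolves the issue.
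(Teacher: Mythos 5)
Your proposal takes a genuinely different route from the paper, and it has one real gap. The paper's proof is very short: it observes that $\partial^+F_\lambda$ inherits positive reach, cites Ferry \cite{Fe} for the fact that the level set is a topological $1$-manifold, cites Theorem~\ref{lip} for the fact that it is a finite union of Lipschitz graphs, and cites Lemma~\ref{connected} for connectedness, so it is a simple closed Lipschitz curve; the $\mathcal{C}^{1,1}$ upgrade is Federer's theorem quoted earlier. You instead re-derive the Lipschitz graph property from scratch, replacing the large-$r$ hypothesis of Lemma~\ref{ang1} by a continuity-of-normal-direction estimate: since $\pi$ is continuous and $d_B(y,\pi(y))=\lambda>0$ is fixed, $y\mapsto y-\pi(y)$ is continuous, so $\angle(\pi(y)\,x\,\pi(x))\to 0$ as $y\to x$, which plays the role of Lemma~\ref{ang1}. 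That substitution is correct, elegant, and more self-contained than citing Theorem~\ref{lip}; it also makes transparent exactly how positive reach enters.

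The gap is the $1$-manifold property, which you drop entirely. The cone-avoidance argument (yours and the paper's in Theorem~\ref{lip}) shows that, near each point, $\partial^+F_\lambda$ is the graph of a Lipschitz function over \emph{some subset} of a line; it does not show that the domain of that function is an interval, i.e.\ that the set is locally an arc. You try to repair this with ``connected with point-or-arc fibers under the monotone map $\pi$ onto the simple closed curve $\partial F$, so it is a simple closed Lipschitz curve,'' but a monotone continuous surjection of a compact connected space onto $S^1$ does not force the domain to be $S^1$ (an annulus, or a circle with an arc glued along a subinterval, admits such a map). The paper closes precisely this gap by invoking Ferry's theorem that $\varepsilon$-boundaries of positive reach are topological $1$-manifolds; once you know $\partial^+F_\lambda$ is a connected $1$-manifold, the local Lipschitz graph property plus compactness gives a simple closed Lipschitz curve. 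To make your argument complete you would need either to cite Ferry as the paper does, or to give a genuine argument (e.g.\ using that $\pi$ is open, proved in Lemma~\ref{surj}, together with the graph structure) that $\partial^+F_\lambda$ is locally an arc; at present that step is missing.
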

\begin{proof} 
Since $\lambda$ is smaller than the reach $r(\partial F)$ it follows that 
$\partial^+F_{\lambda}$ has also positive reach. 
The proof from \cite{Fe} shows that $\partial^+F_{\lambda}$ is 
a 1-manifold. Thus, by Theorem \ref{lip} the set $\partial^+F_{\lambda}$ is 
a Lipschitz 1-manifold. Lemma \ref{connected} shows that 
$\partial^+F_{\lambda}$ is connected and thus it is a simple closed 
curve. 
\end{proof}

\vspace{0.2cm}
\noindent 
Therefore the curve $\partial^+F_{\lambda}$ is  rectifiable.
Recall that  $\partial^+F_{\lambda}$ has an orientation, say the clockwise one.
Consider a maximal $\lambda$-necklace $\mathcal B$ and suppose that there 
exists a pair of consecutive beads which do not touch each other. 
There is induced a cyclic order on the beads 
of any $\lambda$-necklace: the beads $B_1$, $B_2$ and $B_3$ are  
cyclically ordered if the three corresponding points on which the 
$B_i$ touch $\partial F$ are cyclically ordered. 
As $\lambda < r(\partial F)$ each $\lambda$-bead intersects $\partial F$ 
in a unique point and thus the definition makes sense.   

\vspace{0.2cm}
\noindent 
Consider two consecutive beads which do not touch each other. 
If $x\in \partial F$ let $l_x$ be some support line for $\partial F$ at 
$x$  and $B_x$ (depending also on $l_x$) the translate of $\lambda B$ 
which admits $l_x$ as support line at $x$. We assume that going from 
$x$ to the center of $B_x$ we go locally outward $F$. 
We call $B_x$ the virtual $\lambda$-bead attached at $x$. 
Actually the virtual bead might intersect $\partial F$ and thus 
be not a bead. 

\vspace{0.2cm}
\noindent 
The consecutive beads are $B_p$  and $B_q$ for $p,q\in \partial F$. 
We want to slide $B_q$  in counterclockwise direction among the virtual beads 
$B_x$, where $x$ is going from $q$ to $p$ along $\partial F$ until 
$B_x$ touches $B_p$. 
Let ${\mathcal B}_x$ be the virtual necklace obtained from 
the necklace $\mathcal B$ by replacing the bead $B_q$ by the virtual bead  
$B_x$. 

\vspace{0.2cm}
\noindent 
If all virtual necklaces ${\mathcal B}_x$ are genuine necklaces then 
we obtained another maximal necklace in which the  
pair of consecutive beads are now touching each other. We continue this 
procedure while possible. Eventually we stop either when the necklace was 
transformed into an almost complete one, or else the sliding procedure 
cannot be performed anymore. 

\vspace{0.2cm}
\noindent 
Let then assume we have two consecutive beads which cannot get closer 
by sliding. Let then $a$ be the first point on the curve segment from 
$q$ to $p$ (running counter-clockwisely) where the sliding procedure 
gets stalked. We have then two possibilities: 
\begin{enumerate}
\item $B_a$ touches $\partial F$ in one more point. 
\item $B_a$ touches another bead $B_b$ from the necklace $\mathcal B$. 
\end{enumerate}  

\vspace{0.2cm}
\noindent 
In the first situation the center of $B_a$ is at distance $\lambda$ 
from $\partial F$ and the distance is realized twice. Thus 
$r(\partial F) \leq  \lambda$, contradicting our choice of $\lambda$. 

\vspace{0.2cm}
\noindent 
The analysis of the second alternative is slightly more delicate.
Let $z$ be the midpoint of the segment $|xy|$ joining the 
centers of the two beads $B_a$ and $B_b$ respectively.

\vspace{0.2cm}
\begin{center}
\includegraphics{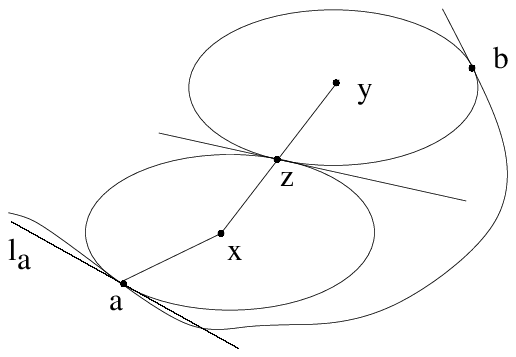}
\end{center}

\vspace{0.2cm}
\noindent 
Let $l_z$ be a support line at $z$, common to both $B_a$ and $B_b$. 
\begin{lemma}\label{angle} 
Either $l_a$ and $l_z$ are parallel or else they intersect in the half-plane 
determined by $xy$ and containing the germ of  the arc of $\partial F$ 
issued from  $a$ which goes toward $p$. 
\end{lemma}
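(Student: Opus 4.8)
The plan is the following. If $l_a\parallel l_z$ we are in the first alternative, so assume $l_a\cap l_z=\{P\}$. The first step is to pin down the direction in which the sliding moves the centre of the bead. As the contact point $\xi$ runs along $\partial F$ from $q$ towards $p$, and up to and including $\xi=a$ every $B_\xi$ is a genuine $\lambda$-bead, the centres of these beads trace a sub-arc of $\partial^+F_\lambda=\partial(F_{\le\lambda})=\partial(F+\lambda B)$, which by Lemma \ref{curve} is a ${\mathcal C}^{1,1}$ curve. For $\lambda<r(\partial F)$ the centre of $B_a$ is the point $x$ with $\pi(x)=a$, and writing $x=a+\lambda b$ with $b\in\partial B$, the support line of $F+\lambda B$ at $x$ has the same outer normal as $l_a$ (viewed as support line of $F$ at $a$) and as the support line of $B$ at $b$, hence is parallel to $l_a$; so the tangent to $\partial^+F_\lambda$ at $x$ is parallel to $l_a$. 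Let $\tau$ be the unit vector along $l_a$ in the sense in which this centre moves while $\xi\to p$. Since $\pi$ preserves the cyclic order, $\tau$ is also the direction in which the arc of $\partial F$ issues from $a$ towards $p$; thus the half-plane in the statement is the one bounded by $xy$ that contains $a$ (note $a\notin xy$ outside a degenerate position treated at the end).

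The second step records what ``stalling'' means in Case 2. The midpoint $z$ of $|xy|$ has $d_B(x,z)=d_B(z,y)=\lambda$ with $d_B(x,y)=2\lambda$, so $z\in\partial B_a\cap\partial B_b$, the line $l_z$ supports both beads at $z$, and $B_b$ lies in the closed half-plane bounded by $l_z$ not containing $x$. Let $n$ be the Euclidean unit normal to $l_z$ with $n\cdot(y-x)>0$. A translate of $B_a$ by a small vector $\varepsilon v$ meets ${\rm int}\,B_b$ near $z$ only if $v\cdot n\ge0$; since in Case 2 the forward slide, in the direction $\tau$, is exactly what the bead $B_b$ obstructs, we get $\tau\cdot n\ge0$. (If $B$ is not strictly convex the contact $B_a\cap B_b$ may be a segment; one runs the same argument at any of its points, the line carrying that segment being the common support line.)

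The last step turns $\tau\cdot n\ge0$ into the location of $P$. As $l_a$ and $l_z$ both support the convex body $B_a$, the body, hence its centre $x$, lies in the closed wedge $W$ at $P$ bounded by $l_a$ and $l_z$; since $x\in{\rm int}\,B_a$ we have $x\in{\rm int}\,W$, and $a,z$ lie on the two edges of $W$. From $y-z=z-x$ the segment $|zy|$ leaves $z$ transversally out of $W$ across the edge carried by $l_z$, so $n$ is the outward normal of that edge; hence $n$ has non-negative product only with the direction of $l_a$ pointing from $a$ to $P$, and $\tau\cdot n\ge0$ forces $\tau$ to be that direction. Therefore $P$ lies on the ray issued from $a$ in the direction $\tau$ — the very ray along which the germ of $\partial F$ at $a$ departs — and since the line $xy$ meets $\partial W=l_a\cup l_z$ only at $z$ and at one further point, on the edge carried by $l_a$, which is not interior to the segment $|aP|$ (this is the sign check, controlled by the fact that $x$ lies between the two points $xy\cap\partial W$), the segment $|aP|$ does not cross $xy$; so $P$ and $a$, hence $P$ and the germ, lie in the same closed half-plane bounded by $xy$, as claimed. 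I expect the only genuinely delicate points to be this final sign check together with the degenerate configurations — $a\in xy$, $l_a\parallel xy$, $a=z$, or $B$ non-strictly-convex so that $l_z$ is only $B$-orthogonal (not Euclidean-orthogonal) to $xy$ — each of which is disposed of by inspection of the figure or by a small perturbation.
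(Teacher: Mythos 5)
Your approach is genuinely different from the paper's. The paper argues by contradiction: supposing $P=l_a\cap l_z$ lies on the wrong side, it takes the support line $l_w$ of $B_a$ parallel to $l_z$ (so $w$ is the antipode of $z$ through $x$), reads off the cyclic order $z,a,w$ on $\partial B_a$, translates $B_a$ slightly parallel to $l_z$ so as to escape $\partial F$ (the translate stays disjoint from ${\rm int}(B_b)$ since $B_b$ lies beyond $l_z$), and then pushes it back toward $F$ along $l_a$, showing the slide was not really stalled. You instead argue directly: you extract the sliding velocity $\tau$ (parallel to $l_a$) from the ${\mathcal C}^{1,1}$ regularity of $\partial^+F_\lambda$, observe that a stall against $B_b$ forces $\tau\cdot n\ge 0$ with $n$ the outward normal to $l_z$, and conclude that $\tau$ is the direction from $a$ to $P$. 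Up to that point your argument is sound and gives a clean kinematic reading of the lemma.

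The gap is in the final sign check. You assert that $xy$ meets $\partial W=l_a\cup l_z$ in exactly two points with $x$ between them; this is false in general, since $W$ is an unbounded wedge and the ray from $x$ away from $z$ may lie in its recession cone. Already for a round $B$ with the central angle between $z$ and $a$ acute, $l_a\cap xy$ falls beyond $z$ on the part of $l_a$ lying outside $W$, and $xy$ exits $W$ only at $z$. And even when a second crossing exists, ``$x$ lies between the two'' does not by itself place it outside $|aP|$; the inference is missing. A clean way to close the hole is via the antipode $z'=2x-z$ and the support strip $S$ bounded by $l_z$ and the parallel support line $l_{z'}$ at $z'$: the whole segment $|aP|$ lies in the closed strip $S$ (as $a\in B_a\subset S$ and $P\in l_z$), and moreover ${\rm int}(|aP|)$ lies strictly on the $B_a$-side of $l_z$; but $s=l_a\cap xy$, not lying in $B_a$ (for strictly convex $B$) and hence not in the diametral chord $|zz'|=xy\cap B_a$, is either beyond $z$ (then strictly on the wrong side of $l_z$, impossible for a point of ${\rm int}(|aP|)$) or beyond $z'$ (then strictly outside $S$, again impossible). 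With that step replaced — and routine care for the degenerate positions you already flag — the proof goes through.
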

\begin{proof}
Assume the contrary and let then $l_w$ be a support line 
to $B_a$ which is parallel to $l_z$ and touches $\partial B_a$ into the 
point $w\in \partial B_a$. The cyclic order on $\partial B_a$ is then 
$z,a$ and $w$. Consider the arc of $\partial F$ issued from $a$.
Since the reach of $\partial F$ is larger than $\lambda$ 
we have $w$ and all points of $B_a\setminus\{a\}$ are contained in 
$\R^2-F$. Thus there is some $\varepsilon$-neighborhood of $w$ which is 
still contained in the open set $\R^2-F$. This implies that we can translate 
slightly $B_a$ along $l_z$ within the strip determined by $l_z$ and $l_w$ 
such that it does not intersect $\partial F$ anymore. 

\vspace{0.2cm}
\begin{center}
\includegraphics{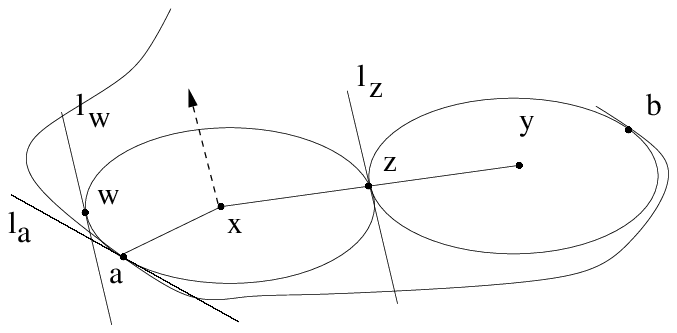}
\end{center}

\vspace{0.2cm}
\noindent 
The translated $B_a$ will remain disjoint from ${\rm int}(B_b)$ because 
the later lies in the other half-plane determined by $l_z$. 
Pushing it further towards $F$ along $l_a$ we find that the sliding 
can be pursued beyond $a$, contradicting our choice for $a$. 
This proves the claim. 
\end{proof}
 
\begin{lemma}
For any $t\in |xy|$ we have $d_B(t, \partial F)\leq 2\lambda$. 
\end{lemma}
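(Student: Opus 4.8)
The plan is to deduce the estimate directly from the triangle inequality once two elementary metric facts are in place: that the centre of each $\lambda$-bead lies at $d_B$-distance exactly $\lambda$ from every point of its own boundary, and that the point $z$ lies on the boundary of both $B_a$ and $B_b$.

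First I would fix notation and record these facts. Let $x$ be the centre of $B_a$ and $y$ the centre of $B_b$. Since $B_a$ is a translate of $\lambda B$ and $B$ is centrally symmetric, every point of $\partial B_a$ is at distance $\lambda$ from $x$ in the norm $\|\cdot\|_B$; applying this to the point $a\in\partial F$ at which $B_a$ meets $\partial F$ gives $d_B(x,a)=\lambda$, and likewise $d_B(y,b)=\lambda$, where $b\in\partial F$ is the (unique, since $\lambda<r(\partial F)$) point at which the genuine bead $B_b$ meets $\partial F$. Because $l_z$ is a support line of both $B_a$ and $B_b$ touching each of them at $z$, we have $z\in\partial B_a\cap\partial B_b$, hence $d_B(x,z)=d_B(z,y)=\lambda$; as $z$ is the midpoint of the straight segment $|xy|$ and straight segments are geodesics for $d_B$, this also records $d_B(x,y)=2\lambda$.

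Now fix $t\in|xy|$. Since $z$ is the midpoint of $|xy|$, the point $t$ lies in $|xz|$ or in $|zy|$; by the symmetry of the two beads assume $t\in|xz|$. Then $x,t,z$ are collinear with $t$ between $x$ and $z$, so additivity of $d_B$ along a segment gives $d_B(t,x)\leq d_B(z,x)=\lambda$, and therefore
\[
d_B(t,\partial F)\leq d_B(t,a)\leq d_B(t,x)+d_B(x,a)\leq\lambda+\lambda=2\lambda ,
\]
which is exactly the claim; the case $t\in|zy|$ is identical with $b,y$ in place of $a,x$. There is no real obstacle here: the only step needing a word of justification is that $z\in\partial B_a\cap\partial B_b$ — which is precisely what the choice of the common support line $l_z$ at the midpoint $z$ provides — after which the statement reduces to the triangle inequality together with the elementary additivity of Minkowski length along a straight segment.
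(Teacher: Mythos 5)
Your argument is essentially the paper's: the paper observes that $|xy|$ is covered by $B_a\cup B_b$ (because the midpoint $z$ lies on both boundaries and each bead is convex), and then applies the triangle inequality via the contact points $a,b\in\partial F$ to get $\min(d_B(t,a),d_B(t,b))\leq 2\lambda$. You have merely written out the same two-step triangle-inequality computation more explicitly, so the proof is correct and matches the paper's approach.
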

\begin{proof}
The segment $|xy|$ is covered by $B_a\cup B_b$ and the triangle inequality 
shows that $\min(d_B(t,a), d_B(t,b))\leq 2\lambda$, which implies the claim.   
\end{proof}
 
\vspace{0.2cm}
\noindent 
Consider now $\partial^+F_{2\lambda}$. By lemmas \ref{surj} and 
\ref{curve} the projection $\pi:\partial^+F_{2\lambda}\to \partial F$ 
is a surjection. 
Let us choose some $w\in \partial^+F_{2\lambda}$ such that 
$\pi(w)=a$. Set $x'$ for the midpoint of the segment $|aw|$. 
\begin{lemma}
The metric ball $B(x',\lambda)$ is a $\lambda$-bead.
\end{lemma}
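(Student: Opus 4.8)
The plan is simply to verify the two clauses of the definition of a $\lambda$-bead for the ball $B(x',\lambda)$, both of which reduce to the triangle inequality together with the way $w$ and $x'$ were chosen. That $B(x',\lambda)=x'+\lambda B$ is a homothetic copy of $B$ in ratio $\lambda$ is immediate from the definition of a Minkowski ball, so the real content is to show that $B(x',\lambda)$ meets $F$ only in boundary points of $F$.

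First I would record what the choice of $w$ buys us. Since $w\in\partial^+F_{2\lambda}$ and $\pi(w)=a$, the point $a$ realizes the distance from $w$ to $\partial F$, so $d_B(w,a)=d_B(w,\partial F)=2\lambda$; and since $w\notin F$ while $F$ is a compact topological disk, the nearest point of $F$ to $w$ must lie on $\partial F$, so $d_B(w,F)=2\lambda$ as well. Next, because $x'$ is the midpoint of $|aw|$ we have $x'-a=\tfrac12(w-a)$, and positive homogeneity of $\|\cdot\|_B$ (equivalently, the fact that segments are geodesics in the Minkowski plane) yields the exact equalities $d_B(a,x')=d_B(x',w)=\tfrac12\, d_B(a,w)=\lambda$. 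In particular $a\in\partial B(x',\lambda)\cap\partial F$, so $B(x',\lambda)$ does have a point in common with $F$.

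The only step with any substance is showing ${\rm int}(B(x',\lambda))\cap F=\emptyset$, and this is one line: if $y\in{\rm int}(B(x',\lambda))\cap F$ then $d_B(x',y)<\lambda$, whence $d_B(w,y)\le d_B(w,x')+d_B(x',y)<2\lambda$, contradicting $d_B(w,F)=2\lambda$ since $y\in F$. Because $F={\rm int}(F)\cup\partial F$, this gives $B(x',\lambda)\cap F\subseteq\partial F$, so $B(x',\lambda)$ has only boundary points in common with $F$ and is therefore a $\lambda$-bead. I do not expect a genuine obstacle here; the one thing to be careful about is purely bookkeeping — keeping track of which distance is meant (that $d_B$ between two points is just the $B$-norm of their difference, and that $d_B(w,\partial F)=d_B(w,F)$ here), and noting that $\|x'-a\|_B=\lambda$ and $\|x'-w\|_B=\lambda$ hold \emph{exactly}, which is what simultaneously places $a$ on the sphere $\partial B(x',\lambda)$ and keeps ${\rm int}(B(x',\lambda))$ off $F$.
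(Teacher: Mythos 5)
Your proof is correct and takes essentially the same approach as the paper: both rest on the exact equalities $d_B(a,x')=d_B(x',w)=\lambda$ coming from the midpoint and a one-line triangle inequality contradicting $d_B(w,\partial F)=2\lambda$. The only (cosmetic) difference is that you first record $d_B(w,F)=2\lambda$ and contradict it directly, whereas the paper produces a crossing point $p'$ of a segment with $\partial F$ and contradicts $d_B(w,\partial F)=2\lambda$; these are the same estimate.
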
 
\begin{proof}
As $a\in B(x',\lambda)\cap \partial F$ it suffices to show that 
$B(x', \lambda)\subset \R^2\setminus {\rm int}(F)$. 
Suppose the contrary and let $p\in {\rm int}(B(x',\lambda))\cap {\rm int}(F)$. 
There exists then some $p'\in |px|\setminus \{p\}$ with 
$p'\in B(x',\lambda)\cap \partial F$. 
The diameter of $B(x', \lambda)$ is $2\lambda$ and so
$d_B(w,p)\leq 2\lambda$, but $p'$ lies on the segment $|pw|$ 
so that $d_B(p',w) <2\lambda$. This implies that $d_B(w,\partial F) <2\lambda$ 
which is a contradiction.  
This establishes the lemma. 
\end{proof}

 \vspace{0.2cm}
\noindent 
The diameter of a $\lambda$-bead is obviously $2\lambda$. 
We say that points $u$ and $v$ are {\em opposite} points in the bead 
if they realize the diameter of the bead. If $B$ is strictly convex the 
each boundary point has a unique opposite point. This is not anymore true 
in general. Given a point on the boundary of a rectangle 
any point on the opposite side is an opposite of the former one.

\begin{lemma}
There exists some point $w$ which is opposite to $a$ in $B_a$ such that 
$w\in \partial^+ F_{2\lambda}$. 
\end{lemma}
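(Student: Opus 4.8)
The plan is to produce the required opposite point as the reflection of $a$ through the center of $B_a$, and to recognize it inside $\partial^+F_{2\lambda}$ via the ``radial'' behaviour of the nearest‑point projection onto a set of positive reach. Write $c_a$ for the center of $B_a$, so that $B_a=c_a+\lambda B$ and, since $a\in\partial B_a$, one has $a-c_a\in\lambda\,\partial B$, i.e. $d_B(c_a,a)=\lambda$. By central symmetry of $B$ the point $w_0:=2c_a-a$ satisfies $w_0-c_a=-(a-c_a)\in\lambda\,\partial B$, hence $w_0\in\partial B_a$ and $d_B(a,w_0)=2\lambda=\operatorname{diam}B_a$; thus $w_0$ is opposite to $a$ in $B_a$. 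It remains to check $w_0\in\partial^+F_{2\lambda}$, that is, $d_B(w_0,\partial F)=2\lambda$ and $w_0\notin F$.

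First I would record that $\pi(c_a)=a$: since $B_a$ is a bead, $\operatorname{int}B_a\cap\partial F=\emptyset$, so $d_B(c_a,\partial F)=\lambda<r(\partial F)$, and as $\lambda$ is below the reach this nearest point is unique, namely $a$; moreover $c_a\in B_a\subset\R^2\setminus\operatorname{int}F$. Next I would invoke the radial property of the metric projection onto a set of positive reach, valid in the metric $d_B$ by the same argument as Federer's in the Euclidean case (see \cite{Fed}; recall $\partial F$ is a Lipschitz curve of positive reach, hence ${\mathcal C}^{1,1}$): if $\pi(p)=a$ and $d_B(p,a)<r(\partial F)$, then $\pi\bigl(a+t(p-a)\bigr)=a$ and $d_B\bigl(a+t(p-a),\partial F\bigr)=t\,d_B(p,a)$ for all $t\in[0,r(\partial F)/d_B(p,a)]$. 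Applying this with $p=c_a$ and $t=2$ (legitimate since $2\lambda<r(\partial F)$) gives $\pi(w_0)=a$ and $d_B(w_0,\partial F)=2\lambda$; together with $w_0\in B_a\subset\R^2\setminus\operatorname{int}F$ and $d_B(w_0,\partial F)>0$ this yields $w_0\notin F$, whence $w_0\in\partial^+F_{2\lambda}$, as required.

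A second route, which bypasses an explicit appeal to the radial property, starts from the point $w$ of the preceding lemma and the midpoint $x'$ of $|aw|$, at which $B(x',\lambda)$ was shown to be a $\lambda$-bead touching $\partial F$ only at $a$; since $\partial F$ is ${\mathcal C}^{1,1}$ it has a unique tangent line $l_a$ at $a$, which is then a common support line of $B_a$ and of $B(x',\lambda)$ at $a$. Tracking the face $E$ of $\lambda B$ along which the contact with $l_a$ occurs, one checks that $a-c_a$ and $a-x'$ lie on $E$, so that the opposite faces of $B_a$ and of $B(x',\lambda)$ are both translates of the face $E+E$ of $2\lambda B$ based at $a$, and $w-a=2(x'-a)$ lies on it; hence $w$ is opposite to $a$ in $B_a$ as well, and $w\in\partial^+F_{2\lambda}$ by the choice of $w$.

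The step I expect to be the genuine obstacle is the radial (equivalently, tangency) property of the nearest‑point projection in the Minkowski metric: extending the ``normal'' segment at $a$ past $c_a$, while remaining within the reach, must not create a second nearest point of $\partial F$. In Euclidean space this is classical (Federer's tangency inequality for sets of positive reach); here one must rerun that argument with $\|\cdot\|_B$ in place of the Euclidean norm, using only convexity of $B$ and the definition of $r(\partial F)$ with respect to $d_B$ — and in the second route the analogous care is needed when $B$ is not strictly convex, so that $E$ may be a genuine segment. Either way, no new geometric input beyond Theorem \ref{lip}, Lemma \ref{surj} (applied with $2\lambda$), the preceding lemmas of this subsection, and Federer's positive‑reach calculus is required.
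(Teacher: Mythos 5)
Both of your routes rest on facts that the paper deliberately avoids assuming. In the first route the crux is the ``radial'' property of the nearest-point projection past the foot point: if $\pi(c_a)=a$ with $d_B(c_a,a)=\lambda$, then $\pi$ stays equal to $a$ along the ray $a+t(c_a-a)$ up to $t=2$. For $t\in[0,1]$ this is a one-line triangle-inequality argument, but for $t>1$ Federer's Euclidean proof is genuinely quadratic: it uses the inner product (the tangency estimate $\langle b-a,\nu\rangle\le |b-a|^2/2r$ for $b\in A$, $\nu$ a unit normal at $a$). There is no inner product in a Minkowski norm, and you give no substitute argument; ``rerun the same argument with $\|\cdot\|_B$'' is not a proof, especially when $B$ need not be strictly convex, so segments are not the only geodesics and the tangent face of $B$ may be flat. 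You correctly flag this as the obstacle, but you do not resolve it, and it is precisely what the paper's proof is engineered to bypass.

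In both routes you also invoke ``$\partial F$ is a Lipschitz curve of positive reach, hence $\mathcal C^{1,1}$'' to obtain a unique tangent line at $a$. That implication (Federer, and Lytchak) is a Euclidean/Riemannian statement; the paper explicitly cautions that positive reach is metric-dependent in Minkowski space, and accordingly does \emph{not} assume $\partial F$ is $\mathcal C^1$ at $a$. This is why the paper's proof has a non-smooth case at all: it replaces your tangency argument by choosing Rademacher points $p_j^\pm\to a$ where $\partial F$ is differentiable, with tangents converging to the two extreme support lines $l_a^\pm$; the corresponding $\lambda$-beads converge to beads $B(x^\pm,\lambda)$ with supports $l_a^\pm$, and the arc of the metric circle of radius $2\lambda$ about $a$ joining $w^+$ to $w^-$ lies in $\partial^+F_{2\lambda}$ by the argument of Lemma~\ref{connected}; since $l_a$ sits between $l_a^+$ and $l_a^-$, the opposite point of $a$ in $B_a$ falls on that arc. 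Nothing in this argument needs a radial projection property, $\mathcal C^{1,1}$ regularity of $\partial F$, or strict convexity of $B$. Your second route's reduction to a unique face $E$ is therefore unjustified as stated, and the first route is missing the key Minkowski analogue of Federer's tangency estimate.
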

\begin{proof}
Let us assume first that $\partial F$ is smooth at $a$, or equivalently that 
it has unique support line at $a$. 
As both $B(x',\lambda)$ and $B_a$ are $\lambda$-beads which have 
the same support line $l_a$ (since it is unique) it follows that 
they coincide. In other terms $w$ is one of the points opposite 
to $a$ in $B_a$. 

\vspace{0.2cm}
\noindent 
Consider now the general case when $\partial F$ is not necessarily smooth 
at $a$. Let $l_a^+$ and $l_a^-$ denote the extreme positions of the 
support lines to $\partial F$ at $a$. Thus $l_a$ belongs to the cone 
determined by $l_a^+$ and $l_a^-$. 
Recall that $\partial F$ was supposed to be Lipschitz and thus by 
the Rademacher theorem it is almost everywhere differentiable. 
There exists  then a sequence of points  $p_j^{\pm}\in \partial F$ 
converging to $a$ such that 
$\partial F$ is smooth at $p_j^+$ and $p_j^-$ and   
the tangent lines at $p_j^+$ (respectively $p_j^-$) 
converge to $l_a^+$ (respectively to $l_a^-$). 

\vspace{0.2cm}
\noindent 
Let $w_j^{\pm}$ be points on $\partial^+F_{2\lambda}$ such that 
$\pi(w_j^{\pm})=p_j^{\pm}$. It follows that $w_j^{+}$ (respectively 
$w_j^{-}$) converge towards a point $w^+$  (respectively $w^-$) 
which lies on the  boundary of a $\lambda$-bead $B(x^+,\lambda)$ (respectively 
$B(x^-,\lambda)$ having the support line $l_a^+$ (respectively $l_a^-$). 
Further $\pi(w^+)=\pi(w^-)=a$. The proof of Lemma \ref{connected} shows that 
the arc of the metric circle centered at $a$ and of radius $2\lambda$ 
which joins $w^+$ to $w^-$ is also contained in $\partial^+F_{2\lambda}$. 
The point $w$ which is opposite   to $a$ in the 
$\lambda$-bead $B_a$ is contained in this arc and thus it belongs to 
$\partial^+F_{2\lambda}$.

\end{proof}
\vspace{0.2cm}
\noindent 
{\em End of the proof of Proposition \ref{necklace}}. 
The clockwise arc $ab$ of $\partial F$ and the union of segments 
$|ax|\cup |xy|\cup |yb|$ which is disjoint from $\partial F$ bound 
together a simply connected domain $\Omega_0$ in the plane. 
Then $\Omega=\Omega_0\setminus {\rm int}(B_a)\cup {\rm int}(B_b)$ is also 
a topological disk, possibly with an arc attached to it (if 
$B_a\cap B_b$ is an arc)  
since it is obtained from $\Omega_0$ by deleting out two small disks 
touching the boundary and having connected intersection.

\begin{center}
\includegraphics{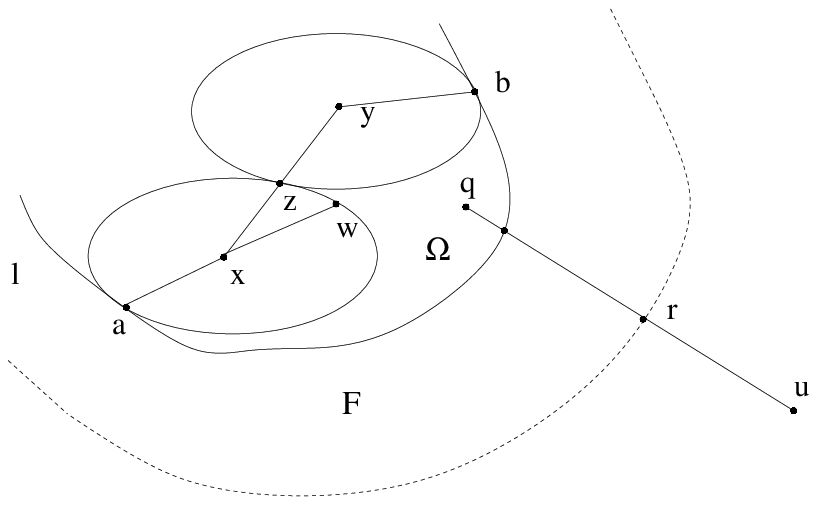}
\end{center} 
According to lemma \ref{angle} $w$ belongs either to $\Omega$  
(for instance when $B$ is strictly convex) or else 
to $B_a\cap B_b$ (when the  support line $l_a$ meets $B_a$ along a segment).

\vspace{0.2cm}
\noindent 
On the other hand the curve $\partial^+F_{2\lambda}$  contains both the 
point $w\in \Omega$ and points outside $\Omega$. 
In fact the arc $ab$ is contained in $\partial F$ 
which bounds the domain $F$. Pick up a point  $q$ of $\Omega$ and 
$r$ in the arc $ab$ such that the half-line $|vr$ does not meet 
$|ax|\cup |xy|\cup |yb|$. Then $|vr$ intersects the domain 
$F$ and thus at least once the clockwise arc $ba$. Let $r$ be such a 
point. Then there exists $u\in \partial^+F_{2\lambda}$ for which 
$\pi(u)=r$. It is clear that $u\not\in \Omega$.  Otherwise, by Jordan 
curve theorem the segment $|ru|$ should intersect once more the 
clockwise arc $ab$ and this would contradict the fact that 
$d_B(u,r)=d_B(u, \partial F)$.

\vspace{0.2cm}
\noindent 
Therefore  the curve $\partial^+F_{2\lambda}$ has to 
exit the domain $\Omega$ and there are two possibilities: 
\begin{enumerate}
\item either $\partial^+F_{2\lambda}$ meets ${\rm int}(B_a)\cup 
{\rm int}(B_b)$.  This will furnish points of $\partial^+F_{2\lambda}$ at 
distance less than $2\lambda$ from  either $a$ or $b$ and thus 
from $\partial F$ and hence it leads to a contradiction. 
\item or else $\partial^+F_{2\lambda}$ meets $\partial B_a\cup\partial B_b$.
In this case any point from  $(\partial B_a\cup\partial B_b)\cap \partial^+F_{2\lambda}$ is at distance $2\lambda$ both from $a$ and from $b$. 
In particular the distance $2\lambda$ is not uniquely realized and 
this contradicts the choice of $2\lambda < r(\partial F)$. 
\end{enumerate}

\subsection{Proof of Proposition \ref{gap}}
Assume that $d_B(a,c)\geq 5\lambda$.   
Let $d\in |ac|$ be the midpoint of $|ac|$ and $D$ denote 
the translate of $A$ centered at $d$. The triangle inequality shows that 
$A\cap D=C\cap D=\emptyset$. 
The segment $|ac|$ intersects once each one of $A$ and $C$. Consider 
the support lines $l_A$ and $l_C$ at these points. Since $A$ and $C$ 
are obtained by a translation one from the other, we can choose 
the support lines to be parallel. The convexity of $D$ implies that 
$D$ is contained in the strip $S$ determined by the parallel lines 
$l_A$ and $l_C$.

\vspace{0.2cm}
\begin{center}
\includegraphics{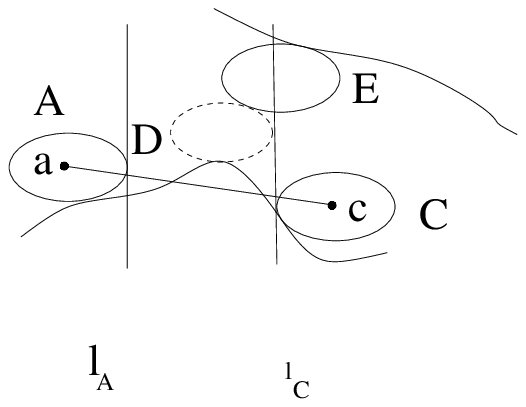}
\end{center}
\vspace{0.2cm}

\vspace{0.2cm}
\noindent 
If  $D\cap \partial F$ is empty, then we translate it within $S$ 
until it touches first $\partial F$. If $D$ intersects 
non-trivially the interior of $F$ on one side of the segment $|ac|$, then we 
translate it in the opposite direction until the contact between $D$ and 
$F$ is along boundary points. We keep the notation $D$ for the 
translated oval.  However,  
by the maximality  of our almost complete $\lambda$-necklace, 
we cannot add $D$ to our beads to make a necklace.  Thus $D$ has to 
intersect either once more  
$\partial F$,  or else another bead $E$  from the necklace. 

\vspace{0.2cm}
\noindent 
Let consider the first situation. 
We deflate gradually the  bead $D$  
by a homothety of ratio going from $1$ to $0$ by keeping its boundary 
contact with $\partial F$ until we reach a position where 
all contact points between $D$ and $F$ are boundary  
points in $\partial F$. This implies that the reach of $\partial F$ 
is less than $\lambda$ which contradicts our choice of $\lambda$. 

\vspace{0.2cm}
\noindent 
When the second alternative holds true we make use of the following: 
\begin{lemma}\label{consecutive}
If two $\lambda$-beads intersect each other and there exist 
$\lambda$-beads between them (both in the clockwise and the 
counterclockwise directions) then the reach of $\partial F$ is at most 
$2\lambda$. 
\end{lemma}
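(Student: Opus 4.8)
The plan is to argue by contradiction, assuming $r(\partial F)>2\lambda$, and to reuse the construction from the end of the proof of Proposition~\ref{necklace}. Write $D,E$ for the two intersecting $\lambda$-beads, with centres $x,y$ and with $D\cap\partial F=\{a'\}$, $E\cap\partial F=\{b'\}$ (a $\lambda$-bead meets $\partial F$ in one point since $\lambda<r(\partial F)$). The hypothesis forces $a'\ne b'$ and, more precisely, that each of the two arcs $\gamma^+,\gamma^-$ into which $\{a',b'\}$ cuts the Jordan curve $\partial F$ carries the contact point of some $\lambda$-bead whose interior avoids ${\rm int}(D)\cup{\rm int}(E)$; after relabelling the two arcs if necessary, let $G$ be such a bead on $\gamma^+$, with contact point $g$. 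As $D\cap E\ne\emptyset$ we have $d_B(x,y)\le2\lambda$, so the midpoint $z$ of $|xy|$ lies in $D\cap E$ and satisfies $d_B(z,a'),d_B(z,b')\le2\lambda$. I would then single out the simple arc $\Pi=|a'z|\cup|zb'|$: it lies in $D\cup E\subset\R^2\setminus{\rm int}(F)$, meets $\partial F$ only at $a',b'$, and \emph{every point of $\Pi$ except $z$ is at distance $<2\lambda$ from $\partial F$}, because a point of $|a'z|$ is within $d_B(a',z)\le2\lambda$ of $a'$, strictly so unless it is $z$, and similarly on $|zb'|$.

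Next I would set up the topological picture just as at the end of the proof of Proposition~\ref{necklace}. The theta-graph $\partial F\cup\Pi$ has three complementary faces: ${\rm int}(F)$, a face $\Omega^+$ bounded by $\gamma^+\cup\Pi$, and a face $\Omega^-$ bounded by $\gamma^-\cup\Pi$; exactly one of $\Omega^+,\Omega^-$ is bounded, and — both arcs carrying a bead — I arrange the labelling so that $\Omega^+$ is the bounded one, which is the sole point where two-sidedness is used, as it guarantees that the bounded face has the bead $G$ resting against it. A connectedness argument gives ${\rm int}(G)\subset\Omega^+$. Set $\Omega=\Omega^+\setminus({\rm int}(D)\cup{\rm int}(E))$, a topological disk (up to an attached arc); then $G\subset\overline\Omega$ and $\partial\Omega\subset\gamma^+\cup\Pi$. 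Since $2\lambda<r(\partial F)$, Lemma~\ref{curve} makes $\partial^+F_{2\lambda}=\partial(F_{\le2\lambda})$ a simple closed curve, so $F_{\le2\lambda}$ is a closed disk with connected unbounded complement; as $\partial\Omega^+\subset F_{\le2\lambda}$ and $\Omega^+$ is bounded, a clopen-set argument yields $\Omega^+\subset{\rm int}(F_{\le2\lambda})$.

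Finally I would exhibit points of $\partial^+F_{2\lambda}$ on both sides of $\partial\Omega$ and invoke connectedness. Applying the argument from the end of the proof of Proposition~\ref{necklace} with $G$ in place of $B_a$ — approximate $g$ by points of $\partial F$ where $\partial F$ is smooth, pass to the limit of their $\pi$-preimages in $\partial^+F_{2\lambda}$, and fill the resulting arc of the metric circle about $g$ by Lemma~\ref{connected} — gives a point $w\in\partial^+F_{2\lambda}$ opposite to $g$ in $G$, hence $w\in G\subset\overline\Omega$. On the other hand, by the surjectivity of $\pi:\partial^+F_{2\lambda}\to\partial F$ (Lemma~\ref{surj}) there is $u\in\partial^+F_{2\lambda}$ with $\pi(u)$ in the interior of $\gamma^-$; being at distance $2\lambda>0$ from $\partial F$, $u$ is not on $\gamma^+\subset\partial F$, not in $\Omega^+\subset{\rm int}(F_{\le2\lambda})$, and not on $\Pi$ (points of $\Pi$ other than $z$ are too close to $\partial F$, and $z\notin\partial^+F_{2\lambda}$ because $d_B(z,\partial F)=2\lambda$ would force $d_B(z,a')=d_B(z,b')=2\lambda$ and hence two distinct nearest points of $\partial F$, impossible as $2\lambda<r(\partial F)$); so $u\notin\overline\Omega$. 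Since $\partial^+F_{2\lambda}$ is connected and meets both $\overline\Omega$ and its complement, it must meet $\partial\Omega\subset\gamma^+\cup\Pi$; but by the very facts just listed it meets neither $\gamma^+$ nor $\Pi$ — a contradiction. Therefore $r(\partial F)\le2\lambda$.

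I expect the real difficulty to lie in the third step, namely in verifying that $\partial^+F_{2\lambda}$ genuinely reaches into $\overline\Omega$ — that is, that some point opposite to $g$ in the bead $G$ sits on the level curve $\partial^+F_{2\lambda}$ — which, because $\partial F$ may fail to be smooth at $g$, requires the approximation-and-limit argument together with the arc-filling of Lemma~\ref{connected}; the remaining steps are routine bookkeeping with the triangle inequality and the Jordan curve theorem.
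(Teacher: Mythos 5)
Your argument is correct and is essentially the paper's own proof: both build the bounded face $\Omega_0$ (your $\Omega^+$) cut off by the arc carrying a third bead $G$ and the two-segment path from one contact point through a common point of $D\cap E$ to the other, show via Proposition~\ref{necklace}'s limit argument that $\partial^+F_{2\lambda}$ has a point inside and a point outside this face, and then use connectedness of $\partial^+F_{2\lambda}$ plus the triangle inequality to rule out any crossing of the face boundary. One small caution: the paper's proof of this lemma works directly with $\Omega_0$, whose boundary is exactly $\gamma^+\cup\Pi$; your extra step of deleting ${\rm int}(D)\cup{\rm int}(E)$ to form $\Omega$ is unnecessary and makes the assertion $\partial\Omega\subset\gamma^+\cup\Pi$ not literally true (arcs of $\partial D,\partial E$ lying in $\Omega^+$ can enter $\partial\Omega$), so it is cleaner to run the connectedness argument against $\partial\Omega^+$ as the paper does — the distance estimates you give for $\Pi$ then close the contradiction without further case analysis.
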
 
\begin{proof}
If $D$ and $E$ are the two beads which intersect non-trivially at $z$  
let $d$ and $e$ be the points where they touch $\partial F$. 
One can choose one of the arcs $de$ or $ed$ of $\partial F$ such that 
together with $|dz|\cup |ze|$ bound a simply connected bounded 
domain $\Omega_0$ which is disjoint from $F$.
There exists at least one other $\lambda$-bead say $G\subset \Omega_0$. 
Then we can find as above a point $w\in \partial G$ which lies in 
$\partial^+F_{2\lambda}$. Therefore $\partial^+F_{2\lambda}$ contains 
points from $\Omega_0$.  It is not hard to see that the argument given 
at the end of the proof of Proposition \ref{necklace} shows that 
$\partial^+F_{2\lambda}$ has also points from outside $\Omega_0$. 
However $\partial^+F_{2\lambda}$ is connected and disjoint from 
$\partial F$ and hence it has to cross $D\cup E$. 
But then we will find that either there are points on $\partial^+F_{2\lambda}$ 
of distance $2\lambda$ from both $d$ and $e$ (contradicting the fact that 
the reach was larger than $2\lambda$) or else we 
find point at distance strictly less than $2\lambda$ from 
either $d$ or $e$, which contradicts the definition of 
$\partial^+F_{2\lambda}$.   
\end{proof}

\vspace{0.2cm}
\noindent 
In our case both $A$ and $C$ are $\lambda$-bead disjoint from $D$ 
both in clockwise and counterclockwise directions. Thus if 
$D$ intersects another bead  $E$, different from $A$ and $C$,  
of the necklace then the reach of $\partial F$ will be smaller than 
$2\lambda$. This contradiction shows that we can  add $D$ to our necklace 
and the Proposition \ref{gap} is proved.

\subsection{Proof of Proposition \ref{leng}}
We will prove first the Proposition \ref{leng} in the case when 
$\partial F$ is a polygon $Q$. 
Denote by $Q_{\lambda}$ the set of points lying outside $Q$ and 
having distance $\lambda$ to $Q$ (or, this is the same, to $\partial Q$).
Let us define a (not necessarily simple) curve $W_{\lambda}$ as follows. 
To each edge $e$ of $Q$ there is associated a parallel segment 
$e_{\lambda}$ which is the translation of $e$ in  outward (with respect to $Q$)
direction dual to $e$.  

\vspace{0.2cm}
\noindent 
Recall the definition of the dual to a given direction. 
Assume for the moment that $\partial B$ is strictly convex. 
If $d$ is a line then let $d_+$ and $d_-$ be support lines to $\partial B$ 
which are parallel to $d$; by the strict convexity assumption 
each lines $d_+,d_-$ intersects $\partial B$ into one point 
$p_+, p_-$ respectively. Then the dual of $d$ is the line $p_+p_-$ (which 
passes through the origin). If $\partial B$ is not strict convex then 
it might still happen that each support line parallel to $d$ has 
one intersection point with $\partial B$, in which case the definition 
of the dual is the same as above. Otherwise $d_+\cap \partial B$ has 
at least two points and thus, by convexity, it should be a segment $z_+t_+$. 
In a similar way $d_-\cap \partial B$ is the a segment 
$z_-t_-$ which is the symmetric of $z_+t_+$ with respect to the 
center of $B$. Thus $z_+z_-t_-t_+$ is a parallelogram having two sides 
parallel to $d$. The direction of the other two sides is the dual of $d$. 

\vspace{0.2cm}
\noindent 
It is immediate then that $d_B(e, e_{\lambda})=\lambda$.

\vspace{0.2cm}
\noindent 
For each vertex $v$ of $Q$ where the edges $e$ and $f$ meet together 
we will associate an arc $v_{\lambda}$ of the circle $\lambda \partial B$ of radius $\lambda$. Let $n_{e}$ and $n_{f}$ be the length $\lambda$  
vectors  whose directions are dual to $e$ and $f$ respectively  
and are pointing outward $Q$. 
Let $v_{\lambda}$ be 
the arc of $\lambda B$ corresponding to the trajectory drawn by $n_e$ 
when rotated to arrive in position $n_f$ while pointing outward of $Q$. 

\vspace{0.2cm}
\noindent 
Let us order cyclically the edges $e_1,e_2,\ldots, e_n$ of $Q$ clockwisely and 
the vertices $v_j$ (which is common to $e_j$ and $e_{j+1}$).  
Let also $A_j$ (respectively $B_j$) denote the left (respectively right) 
endpoint of ${e_j}_{\lambda}$. Set $\alpha_j$ for the interior angle 
(with respect to $Q$) between $e_j$ and $e_{j+1}$. 
Observe that the configuration around two consecutive edges is one of the following type: 
\begin{enumerate}
\item if $\alpha_j\leq \pi$ then ${e_j}_{\lambda}$ and ${e_{j+1}}_{\lambda}$ 
are disjoint and joined by the arc ${v_j}_{\lambda}$ is 
which is locally outside $Q$; 
\item if $\pi< \alpha_j< 2\pi$ then 
${e_j}_{\lambda}$ and ${e_{j+1}}_{\lambda}$ intersect at some point $C_j$.
\end{enumerate}

\vspace{0.2cm}
\noindent 
Let us define ${e_j}^*_{\lambda}$ to be the segment whose left endpoint 
is $C_{j-1}$, if $\alpha_{j-1} > \pi$ and $A_j$  elsewhere 
while the right endpoint 
is $C_j$, if $\alpha_j >\pi$, and $B_j$ otherwise. 
Let also ${v_j^*}_{\lambda}$ be 
empty when $\alpha_j >\pi$ and the arc ${v_j}_{\lambda}$ otherwise. 

\vspace{0.2cm}
\noindent 
Set $W_{\lambda}$ for the union of edges ${e_j^*}_{\lambda}$ 
and of arcs ${v_j^*}_{\lambda}$.  
Notice that $W_{\lambda}$ might have (global) self-intersections. 

\vspace{0.2cm}
\noindent 
Observe that $Q_{\lambda}\subset W_{\lambda}$. Notice that the inclusion
might be proper. 

\vspace{0.2cm}
\noindent 
We claim now that: 
\begin{lemma}
 The length  of $W_{\lambda}$ verifies 
\begin{equation} p_B(W_{\lambda})\leq p_B(Q)+\lambda p_B(\partial B)\end{equation}
\end{lemma}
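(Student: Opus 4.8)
The plan is to decompose $W_\lambda$ according to its combinatorial pieces — the translated edge segments ${e_j^*}_\lambda$ and the arcs ${v_j^*}_\lambda$ — and to account separately for the total contribution of the edges and of the arcs. First I would observe that for each $j$ the segment ${e_j^*}_\lambda$ is contained in the full translated line segment ${e_j}_\lambda$, which is parallel to $e_j$ and a translate of it; hence $p_B({e_j}_\lambda)=p_B(e_j)$. When $\alpha_{j-1}\le\pi$ and $\alpha_j\le\pi$ we simply have ${e_j^*}_\lambda={e_j}_\lambda$, so its length is exactly $p_B(e_j)$. When a neighbouring angle is reflex ($>\pi$) the segment ${e_j^*}_\lambda$ is obtained from ${e_j}_\lambda$ by cutting it at an intersection point $C_{j-1}$ or $C_j$; in that case ${e_j^*}_\lambda$ is a \emph{sub-segment} of the line containing ${e_j}_\lambda$, but its endpoint may lie beyond ${e_j}_\lambda$, so a little care is needed. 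The key point I would make precise is that the total length $\sum_j p_B({e_j^*}_\lambda)$ of all the edge pieces of $W_\lambda$ is at most $\sum_j p_B(e_j)=p_B(Q)$: the reflex vertices only remove arc length and replace it by segment overshoot, and the overshoot at $C_j$ along one edge is matched so that the net effect does not exceed the original edge lengths. Concretely, at a reflex vertex $v_j$ the two segments ${e_j}_\lambda$ and ${e_{j+1}}_\lambda$ cross at $C_j$, and the union ${e_j^*}_\lambda\cup{e_{j+1}^*}_\lambda$ near $C_j$ is shorter than ${e_j}_\lambda\cup{e_{j+1}}_\lambda\cup{v_j}_\lambda$ by the triangle inequality applied in the Minkowski metric (two sides of the "triangle" with vertex $C_j$ are the truncated pieces, and the detour through the arc and the far endpoints is longer).

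Second, I would handle the arcs. For each non-reflex vertex $v_j$ the arc ${v_j^*}_\lambda={v_j}_\lambda$ is an arc of the scaled boundary $\lambda\partial B$, subtending the exterior turning made by the outward dual normal as it rotates from the direction dual to $e_j$ to the direction dual to $e_{j+1}$. Since $Q$ is a simple closed polygon, the outward normal directions, as one traverses the boundary clockwise, sweep out the full circle of directions exactly once; at reflex vertices the normal turns "backward", and by construction we have discarded the arc there (${v_j^*}_\lambda=\emptyset$) precisely because the segments ${e_j}_\lambda,{e_{j+1}}_\lambda$ already overshoot and cover that range. Hence the arcs ${v_j^*}_\lambda$ that actually appear are pairwise non-overlapping arcs of $\lambda\partial B$ whose union is contained in $\lambda\partial B$, so $\sum_j p_B({v_j^*}_\lambda)\le p_B(\lambda\partial B)=\lambda\,p_B(\partial B)$ — here I use that $p_B$ is homogeneous of degree one under homotheties. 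Adding the two estimates gives $p_B(W_\lambda)=\sum_j p_B({e_j^*}_\lambda)+\sum_j p_B({v_j^*}_\lambda)\le p_B(Q)+\lambda\,p_B(\partial B)$.

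The main obstacle, and the step I would spend the most care on, is the bookkeeping at reflex vertices: showing rigorously that truncating ${e_j}_\lambda$ and ${e_{j+1}}_\lambda$ at their intersection point $C_j$ and deleting the arc ${v_j}_\lambda$ does not \emph{increase} the total length even though the truncated segments may extend past the original endpoints $A_j,B_{j+1}$. The clean way to see this is local: consider the closed polygonal path $A_j\to B_j\to$ (arc ${v_j}_\lambda$) $\to A_{j+1}\to B_{j+1}$ versus the shortcut $A_j\to C_j\to B_{j+1}$; both go from $A_j$ to $B_{j+1}$, and the second is the straight concatenation of two segments meeting at $C_j$ while the first is a genuine detour (it leaves the first segment at $B_j$, traverses a convex arc, and returns along the second segment at $A_{j+1}$), so the triangle/polygon inequality for the length functional $p_B$ — which is additive along segments and monotone under taking convex detours — yields the desired comparison. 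One must also check the degenerate cases where $\partial B$ is not strictly convex, so that ${e_j}_\lambda$ or the arc can have portions of zero $B$-length or the dual direction is a whole parallelogram side; the construction of the dual recalled just before the lemma was set up precisely to make these cases behave, and the same inequalities go through verbatim since $p_B$ restricted to any line is just a positive multiple of Euclidean length and restricted to $\lambda\partial B$ is additive over non-overlapping arcs. Everything else is a routine summation, using additivity of $p_B$ over the finitely many pieces of $W_\lambda$ and the homogeneity $p_B(\lambda\partial B)=\lambda p_B(\partial B)$.
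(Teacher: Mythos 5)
Your separate claim that the non-reflex arcs $\{v_j^*\}_\lambda$ are pairwise non-overlapping arcs of $\lambda\partial B$ with total length at most $\lambda\,p_B(\partial B)$ is false as soon as $Q$ has a reflex vertex. The signed turning of the outward normal around a simple closed polygon is $2\pi$, so $\sum_{j}(\pi-\alpha_j)=2\pi$; but the \emph{forward} turning alone, $\sum_{\alpha_j\le\pi}(\pi-\alpha_j)$, equals $2\pi$ \emph{plus} the total backward turning $\sum_{\alpha_j>\pi}(\alpha_j-\pi)$, and hence strictly exceeds $2\pi$ whenever there is a reflex vertex. For a plain $L$-shaped hexagon, for instance, the five convex corners give arcs totalling an angular measure of $5\pi/2 > 2\pi$, so the corresponding arcs \emph{must} overlap as ranges of directions, and $\sum_{\alpha_j\le\pi}p_B\bigl({v_j}_\lambda\bigr)>\lambda\,p_B(\partial B)$. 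Your other estimate, $\sum_j p_B\bigl({e_j^*}_\lambda\bigr)\le p_B(Q)$, is in fact true --- the intersection point $C_j$ lies \emph{inside} both segments ${e_j}_\lambda$ and ${e_{j+1}}_\lambda$, so trimming only ever shortens edges; your worry that ``the truncated segments may extend past the original endpoints'' is a misreading of the construction --- but the slack produced by the trimming, namely $\sum_{\alpha_j>\pi}\bigl(|C_jB_j|_B+|A_{j+1}C_j|_B\bigr)$, is exactly what is needed to offset the arc excess, and your two-step decomposition throws that slack away.

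The paper's proof does the bookkeeping you try to sidestep. It starts from the turning-angle identity $\sum_j\sigma(v_j)\,p_B\bigl({v_j}_\lambda\bigr)=\lambda\,p_B(\partial B)$, where $\sigma(v_j)=\pm 1$ according to whether $\alpha_j\le\pi$ or $\alpha_j>\pi$; adding $p_B(Q)=\sum_j p_B(e_j)$ and regrouping term by term yields
\begin{equation*}
\lambda\,p_B(\partial B)+p_B(Q)=p_B(W_\lambda)+\sum_{\alpha_j>\pi}\Bigl(|A_{j+1}C_j|_B+|C_jB_j|_B-p_B\bigl({v_j}_\lambda\bigr)\Bigr),
\end{equation*}
and each bracketed term is nonnegative because the two support segments $|A_{j+1}C_j|$ and $|C_jB_j|$ surround the convex arc ${v_j}_\lambda$ (a convex curve is shorter than any curve enclosing it). Your ``local'' paragraph gestures at exactly this convexity inequality, so you do have the right elementary ingredient; what is missing is the signed identity that ties the reflex-vertex edge slack to the reflex arc lengths, rather than bounding edges and arcs separately.
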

\begin{proof}
The arcs ${v_j}_{\lambda}$ are naturally oriented, using the orientation of 
$\partial Q$. Moreover, its orientation is positive if $\alpha_j \leq \pi$ and 
negative otherwise.  
Since $\sum_{j=1}^n \alpha_j=(n-2)\pi$ we have 
$\sum_{j=1}^n (\pi- \alpha_j)=2\pi$, which means that the 
algebraic sum of the arcs ${v_{j}}_{\lambda}$ is once the circumference 
of $\lambda \partial B$. Thus 
\begin{equation}  \sum_{j=1}^n \sigma(v_j)p_B({v_{j}}_{\lambda})=\lambda p_B(\partial B)\end{equation}
where $\sigma_j\in\{-1,1\}$ is the sign giving the orientation of 
${v_{j}}_{\lambda}$. 
It follows that 
\begin{equation}  \lambda p_B(\partial B)+p_B(Q) = 
\sum_{j=1}^n \sigma(v_j)p_B({v_{j}}_{\lambda}) + 
\sum_{j=1}^n p_B(e_j)\end{equation}

\vspace{0.2cm}
\noindent 
Now,  $\sigma(j)=-1$ if and only if $C_j$ is defined (i.e. the 
angle $\alpha_j >\pi$). Thus 
\begin{eqnarray} \sum_{j=1}^n \sigma(v_j)p_B({v_{j}}_{\lambda}) + 
\sum_{j=1}^n p_B(e_j) & = &
\sum_{j=1; \alpha_j \leq \pi}^n (p_B({v_{j}}_{\lambda}) + p_B(e_j)) + \nonumber \\
& & + \sum_{j=1; \alpha_j > \pi}^n (|A_{j+1}C_j|_B + |C_jB_j|_B - 
p_B({v_j}_{\lambda}) + p_B({e_j^*}_{\lambda}))=\nonumber \\ 
&  = & p_B(W_{\lambda})+  \sum_{j=1; \alpha_j > \pi}^n (|A_{j+1}C_j|_B + |C_jB_j|_B - p_B({v_{j}}_{\lambda}) \geq  \nonumber \\
& \geq & p_B(W_{\lambda})\end{eqnarray}
The last inequality follows from 
\begin{equation}  |A_{j+1}C_j|_B + |C_jB_j|_B \geq p_B({v_{j}}_{\lambda})\end{equation}
In fact, it is proved in (\cite{T}, p.121),  see also  
or the elementary proof from (\cite{MSW1}, 3.4., p.111-113), that 
 a convex curve  is 
shorter than any other curve surrounding it. 
Moreover the direction $B_jC_j$ is dual to $e_j$ and thus it is 
tangent to a copy of $\lambda B$ translated at $v_j$; in a similar way 
$A_{j+1}C_j$ is dual to $e_{j+1}$ and thus tangent to the same 
copy of $\lambda B$. In other words the arc ${v_j}_{\lambda}$ determined 
by $A_{j+1}$ and $B_j$ is surrounded by the union $|A_{j+1}C_j|\cup |C_jB_j|$ 
of two support segments. The convexity of $\partial B$ implies the 
inequality above, and in particular our claim. 
\end{proof}

\begin{remark}
One can use the signed measures  defined by Stach\'o in \cite{St2} 
for computing the length of $\partial^+F_{\lambda}$ 
and to obtain, as a corollary, the result of Proposition \ref{leng}.
Our proof for planar rectifiable curves  has the advantage to 
be completely elementary. 
\end{remark}

\vspace{0.2cm}
\noindent {\em End of the proof of proposition \ref{leng}}.
Let now $\partial F$ be an arbitrary rectifiable simple curve. 
It is known that there exists a sequence of polygons $Q_n$ inscribed 
in $\partial F$ such that 
$\lim_n p_B(Q_n)=p_B(\partial F)$. 
Here $p_B$ denotes the  Jordan (equivalently Lebesgue) length of the 
respective curve, in the Minkowski metric. 

\vspace{0.2cm}
\noindent 
Therefore ${Q_n}_{\lambda}$ is a sequence of rectifiable curves  
which  converge to $\partial^+F_{\lambda}$. 
By theorem \ref{lip} $\partial^+F_{\lambda}$ is the union of finitely many 
Lipschitz 1-manifolds and thus the Lebesgue length of $\partial^+F_{\lambda}$ 
makes sense. By the lower semi-continuity 
of the Lebesgue length (see e.g. \cite{Ce}) it follows that 
\begin{equation} \lim_n\inf p_B({Q_n}_{\lambda}) \geq p_B(\partial^+F_{\lambda})\end{equation}
However we proved above that for simple polygonal lines ${Q_n}$ 
we have: 
\begin{equation}  p_B({Q_n}_{\lambda}) \leq p_B(Q_n) + \lambda p_B(\partial B)\end{equation}
Passing to the limit  $n \to \infty$ we obtain 
\begin{equation}   p_B(\partial^+F_{\lambda}) \leq \lim_n\inf p_B(Q_n) + \lambda p_B(\partial B)=
p_B(\partial F) + \lambda p_B(\partial B)\end{equation}
Therefore Proposition \ref{leng} follows.

\subsection{Curves of  zero reach}

Consider now an arbitrary simple closed Lipschitz curve $\partial F$ 
in the plane. When sliding  $\lambda$-beads for achieving almost completeness 
of necklaces  we  might get stalked because we 
encounter points of $\partial F$ with reach smaller than $\lambda$. 
Let us introduce the following definitions. 
 
\begin{definition}
The clockwise arc $ab$ of $\partial F$ is a $\lambda$-corner 
if there exists a $\lambda$-bead $B_a$ which touches $\partial F$ at $a$ and 
$b$ and such that there is no $\lambda$-bead $B_x$ for $x$ in the interior 
of the arc $ab$ (except possibly for $B_a$). 
\end{definition}

\begin{definition}
The clockwise arcs $aa'$ and $b'b$ of $\partial F$ form a 
{\em long} $\lambda$-gallery if there exist two 
{\em disjoint} $\lambda$-beads $B_a$ and $B_{a'}$ with 
$\{a,b\}\subset B_a\cap \partial F$ and 
 $\{a',b'\}\subset B_{a'}\cap \partial F$ such that: 
\begin{enumerate}
\item there is no $\lambda$-bead touching the arcs $aa'$ or $bb'$; 
\item at least one  complementary arc among $a'b'$ and $ba$ 
admits a $\lambda$-bead which is disjoint from $B_a$ and $B_{a'}$. 
\end{enumerate}
\end{definition}

\begin{definition}
The clockwise arcs $aa'$ and $b'b$ of $\partial F$ form a 
{\em short} $\lambda$-gallery if there exist two 
$\lambda$-beads $B_a$ and $B_{a'}$ with non-empty intersection,  
$\{a,b\}\subset B_a\cap \partial F$ and 
 $\{a',b'\}\subset B_{a'}\cap \partial F$ such that: 
\begin{enumerate}
\item any $\lambda$-bead touching $aa'\cup b'b$ should intersect 
the boundary beads $B_a\cup B_{a'}$; 
\item there is no $2\lambda$-bead touching the arcs $aa'\cup b'b$; 
\end{enumerate}
\end{definition}

\vspace{0.2cm}
\noindent 
Observe that $\lambda$-corners do not really make problems in sliding 
$\lambda$-beads, because we can jump from $a$ to $b$ keeping the same bead and 
we can continue the sliding from there on.

\vspace{0.2cm}
\noindent 
Set $Z_{\lambda}$ for the set of points that belong to some 
$\lambda$-gallery (long or short).

\begin{lemma}
For each $\lambda >0$ the number of maximal 
$\lambda$-galleries is finite. 
\end{lemma}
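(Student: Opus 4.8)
The plan is to argue by contradiction, combining a merging principle for galleries with a compactness argument based on the finite length of $\partial F$. Suppose $\partial F$ carried infinitely many pairwise distinct maximal $\lambda$-galleries $G_1,G_2,\ldots$. To each $G_n$ I would attach its two boundary $\lambda$-beads $B_{a_n},B_{a_n'}$ and its corridor $I_n\subset\partial F$, the union of the two clockwise arcs $a_na_n'$ and $b_n'b_n$ figuring in the definitions, so that $d_B(a_n,b_n)\le 2\lambda$ and $d_B(a_n',b_n')\le 2\lambda$.

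First I would prove a merging principle: if the corridors $I_n$ and $I_m$ of two maximal galleries meet, then their union is contained in the corridor of a single $\lambda$-gallery, so by maximality $G_n=G_m$. Establishing this amounts to a case analysis over the definitions — long versus short gallery, boundary beads coinciding or merely consecutive along $\partial F$ — carried out with the planar-topology tools already used in the proof of Proposition~\ref{necklace}: the auxiliary simply connected domain bounded by corridor arcs together with bead segments, Lemma~\ref{angle} on the location of the common support line, and the level-set regularity of $\partial^+F_{2\lambda}=\partial(F_{\le 2\lambda})$ from Theorem~\ref{lip}, together with the by-now-standard remark that a point of $\partial^+F_{2\lambda}$ trapped in such a domain either lies at distance $<2\lambda$ from $\partial F$ or realizes the distance $2\lambda$ twice. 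I expect this merging step to be the main obstacle: the arguments are elementary but the number of mutual configurations of two galleries to dispatch is not small.

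Granted the merging principle, the corridors $I_1,I_2,\ldots$ are pairwise disjoint sub-arcs of the rectifiable curve $\partial F$, hence $\sum_n p_B(I_n)\le p_B(\partial F)<\infty$ and, along a subsequence, $p_B(I_n)\to 0$. Then the arcs $a_na_n'$ and $b_n'b_n$ shrink, and since the $B_{a_n},B_{a_n'}$ are translates of the fixed oval $\lambda B$ with centers in a bounded set, a further extraction gives $a_n,a_n'\to p$ and $b_n,b_n'\to p'$ with $d_B(p,p')\le 2\lambda$, as well as $B_{a_n}\to B$ and $B_{a_n'}\to B'$, where $B,B'$ are $\lambda$-beads touching $\partial F$ with $\{p,p'\}\subset\partial B\cap\partial B'$. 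Finally I would contradict the defining properties of $G_n$ for $n$ large by transporting the local analysis of the sliding procedure to this limiting bigon $pp'$: for a tail of long galleries, clause~(2) provides a third $\lambda$-bead disjoint from $B_{a_n},B_{a_n'}$ touching a complementary arc, and feeding $\partial^+F_{2\lambda}$ into the domain bounded by the vanishing corridor $I_n$ and the bead segments near $pp'$ reproduces the forbidden configuration from the end of the proof of Proposition~\ref{necklace} — a point of $\partial^+F_{2\lambda}$ at distance $2\lambda$ from two distinct points of $\partial F$, or at distance $<2\lambda$ from $\partial F$ — whence a genuine $\lambda$-bead touching $I_n$, against clause~(1); for a tail of short galleries one runs the same argument with $2\lambda$-beads, against clause~(2) of the short-gallery definition. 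Since one of the two tails is infinite, the assumption is absurd and there are only finitely many maximal $\lambda$-galleries.
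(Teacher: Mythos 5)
Your overall skeleton — argue that distinct maximal galleries are essentially disjoint, deduce from rectifiability that their corridor lengths tend to zero, extract a compact limit — matches the opening of the paper's proof. But the decisive step differs and, as you present it, has a genuine gap. The paper derives its contradiction not from a single gallery in the limit but from a \emph{pair of consecutive galleries}: it observes that the intermediary arcs between consecutive corridors also have total finite length, so along a subsequence both the corridors and the intermediary arcs shrink; then it shows that for $n$ large the union of two consecutive galleries together with the intermediary arc between them is itself a short $\lambda$-gallery, which contradicts maximality. That claim is verified by ruling out, with elementary length estimates, the two ways the union could fail to be a short gallery (either a disjoint $\lambda$-bead touching the intermediary arc, whose surrounding closed curve forces the intermediary arc to have length at least $\lambda(p_B(\partial B)-4)\ge 2\lambda$, or a $2\lambda$-bead touching it, whose surrounding curve forces length at least $\tfrac{2\lambda}{3}p_B(\partial B)\ge 4\lambda$). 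Both are impossible once the intermediary arcs are short, so the merged object is a bigger gallery — contradiction.

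Your final step instead tries to ``contradict the defining properties of $G_n$ for $n$ large'' via a limiting bigon $pp'$ and the level set $\partial^+F_{2\lambda}$, concluding ``whence a genuine $\lambda$-bead touching $I_n$, against clause~(1).'' This inference does not follow. The forbidden configuration at the end of the proof of Proposition~\ref{necklace} (a point of $\partial^+F_{2\lambda}$ equidistant from two points of $\partial F$, or at distance $<2\lambda$ from $\partial F$) was contradictory only under the standing hypothesis $2\lambda<r(\partial F)$. In the zero-reach subsection there is no such hypothesis — indeed galleries exist precisely where the reach fails — so exhibiting that configuration contradicts nothing, and it certainly does not manufacture a $\lambda$-bead touching the corridor. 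More fundamentally, the gallery axioms for a single $G_n$ are assertions about the \emph{absence} of beads on its corridor; they cannot become internally contradictory in a Hausdorff limit, so no contradiction is reachable by looking at one gallery at a time. The mechanism you are missing is the interaction between neighbouring galleries across the intermediary arc, which is where the paper's proof lives. Your ``merging principle'' is also stronger than what is used (the paper simply notes maximal galleries are disjoint up to boundary points) and, as you acknowledge, its case analysis is unwritten; but even granting it, the last paragraph would still need to be replaced by the consecutive-gallery argument.
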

\begin{proof}
Assume that we have infinitely many $\lambda$-galleries. 
They have to be disjoint, except possibly for their boundary points. 
Thus the length of their arcs converges to zero. 
Moreover, the associated pairs of arcs of $\partial F$ 
converge towards a pair of two points at distance $2\lambda$. 
Thus all but finitely many galleries are short galleries. 
The lengths of intermediary arcs (those joining consecutive gallery arcs 
in the sequence) should have their length going to zero since 
their total length is finite. 

\vspace{0.2cm}
\noindent 
Consider now the union of two consecutive galleries in the sequence 
together with the intermediary arcs between them. We claim that 
if we are deep enough in the sequence then this union will also be 
a gallery, thus contradicting the maximality. 
Assume the contrary, namely that the union is not a short gallery. 
Then one should find  either a $\lambda$-bead touching one intermediary arc
which is disjoint from the boundary beads, or else a $2\lambda$-bead.

\vspace{0.2cm}
\noindent 
In the first case the intermediary arc joins two points $x,y$ of 
intersecting $\lambda$-beads and surrounds a disjoint $\lambda$-bead. 
Let $z$ be a common point for the two boundary beads. Then 
the union of $|xz|\cup |zy|$ with the intermediary arc forms a closed 
curve surrounding the  boundary of a $\lambda$-bead. In particular its length 
is larger than or equal to $\lambda p_B(\partial B)$. Since 
$d_B(x,z), d_B(z,y)\leq 2\lambda$ it follows that the length of the 
intermediary arc is at least $\lambda (p_B(\partial B)-4)\geq 2\lambda$. 
However intermediary arcs should have length going to zero, so this 
is a contradiction. 

\vspace{0.2cm}
\noindent 
The second alternative tells us that there exists a $2\lambda$-bead touching 
the intermediary arc. Let the arcs be $xy$ and $x'y'$. Then we claim that 
the union of the arcs $xx'$ (in the boundary of the bead), $x'y'$, $y'y$ 
(in the boundary of the bead) and $yx$ is a closed curve surrounding 
the convex $2\lambda$-bead.  
Therefore their total length is at least 
$2\lambda p_B(\partial B)$. 

\vspace{0.2cm}
\noindent 
In fact suppose that the $2\lambda$-bead of center $w$ intersects 
the arc $xx'$. Observe that $w$ is  not contained in 
the  interior of the $\lambda$-bead because otherwise the 
$2\lambda$-bead would contain it and thus there will be no place for the arc 
of $\partial F$. Further we find that the distance function 
$d_B(z,w)$ for $z$ in the arc $xx'$ will have points where it takes values 
smaller than $2\lambda$. As $d_B(x,w), d_B(x',w)\geq 2\lambda$ it follows 
that the distance function will have  at least two local maxima. However 
since $B$ is convex the distance function to a point cannot have several 
local maxima unless when $B$ is not strictly convex and there is a 
segment of maxima. This proves the claim.  
 
\vspace{0.2cm}
\noindent 
\begin{center}
\includegraphics{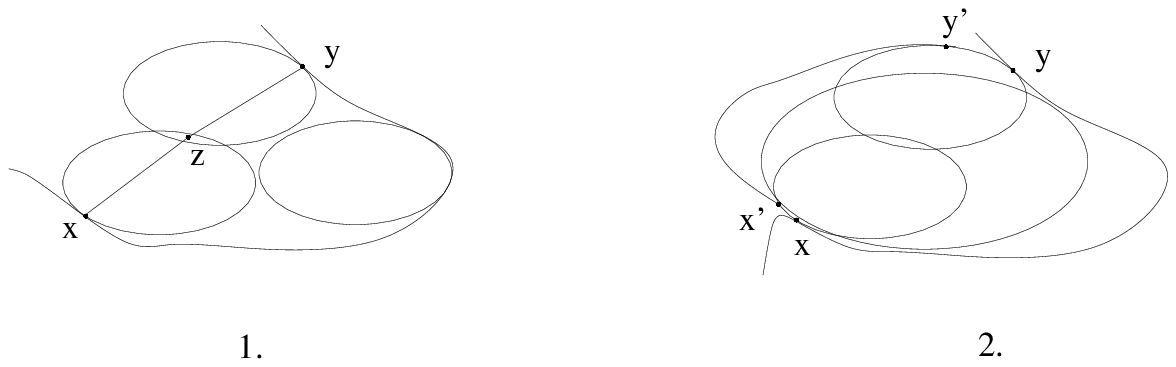}
\end{center}

\vspace{0.2cm}
\noindent 
However the  sum of the lengths of the arcs $xx'$ and $y'y$ 
is smaller than $\frac{4\lambda}{3}p_B(\partial B)$ if we are far enough in 
the sequence. 
Indeed the two boundary $\lambda$-beads intersect each other and 
their centers become closer and closer as we approach the limit bead. 
Then the perimeter of the union of the two convex $\lambda$-beads converge 
to the perimeter of one bead. In particular, at some point it becomes smaller 
than $\frac{4\lambda}{3}p_B(\partial B)$.   

\vspace{0.2cm}
\noindent 
This implies that the length of the arcs $xy$ and $x'y'$ is at least 
 $\frac{2\lambda}{3}p_B(\partial B)\geq 4\lambda$. This is in contradiction with  
the fact that intermediary arcs should converge to points. 
\end{proof}

\begin{lemma}\label{liminf}
For any $\lambda >0$ we have 
\begin{equation} p_B(\partial F\setminus Z_{2\lambda}\cup Z_{\lambda})\leq \lim_{\delta\to 0}2\delta N_{\delta}(\partial F)\end{equation}
\end{lemma}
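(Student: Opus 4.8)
The plan is to build, for each sufficiently small $\delta>0$, an explicit $\delta$-necklace whose beads sit along the ``good'' part of $\partial F$, namely the set $\partial F\setminus Z_{2\lambda}\cup Z_{\lambda}$, and to estimate from below how many beads such a necklace can carry. Fix $\lambda>0$; by the finiteness lemma just proved there are only finitely many maximal $\lambda$-galleries (and, with $2\lambda$ in place of $\lambda$, finitely many maximal $2\lambda$-galleries), so the complement $\partial F\setminus(Z_{2\lambda}\cup Z_{\lambda})$ is a finite union of closed arcs $\gamma_1,\dots,\gamma_m$. On each $\gamma_i$ every point has reach at least $\lambda$ with respect to the local sliding procedure: by the very definition of $\lambda$-corner and of long/short $\lambda$-gallery, the only obstructions to sliding a $\delta$-bead (for $\delta$ small compared to $\lambda$) past a point of $\gamma_i$ are exactly the points of $Z_{2\lambda}\cup Z_{\lambda}$, which have been excised. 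So on each $\gamma_i$ we may run the sliding argument of Proposition \ref{necklace} verbatim and produce an almost complete $\delta$-necklace packed along $\gamma_i$, with consecutive beads touching except possibly at the two ends of $\gamma_i$.

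The second step is the counting estimate. Let $P_i(\delta)$ be the polygon through the centers of the beads packed along $\gamma_i$. Exactly as in Proposition \ref{gap}, each gap between consecutive beads is at most $3\delta$, and there are at most two ``boundary'' gaps per arc $\gamma_i$ (at the endpoints of $\gamma_i$), so the total length of $\partial F$ that is ``wasted'' is at most $(m+\#\{\text{galleries}\})\cdot O(\delta)$, a quantity that depends on $\lambda$ but not on $\delta$. Hence, summing over $i$,
\begin{equation}
2\delta\,\#\{\text{beads on }\partial F\setminus Z_{2\lambda}\cup Z_{\lambda}\}\ \geq\ \sum_{i=1}^m p_B(P_i(\delta))\ -\ O(\delta)\ \geq\ \sum_{i=1}^m p_B(P_i(\delta)) - O(\delta).
\end{equation}
Since the vertices of $P_i(\delta)$ are $2\delta$-dense in $\gamma_i$ and the projection $\pi$ identifies the inscribed polygon with a genuine inscribed polygon of $\gamma_i$, lower semicontinuity of the Lebesgue--Minkowski length (as used in the proof of Proposition \ref{limreach}, see \cite{Ce}) gives $\liminf_{\delta\to0} p_B(P_i(\delta))\geq p_B(\gamma_i)$. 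These beads are in particular a subfamily of a maximal $\delta$-necklace of $\partial F$, so their number is at most $N_\delta(\partial F)$. Passing to the limit $\delta\to 0$ kills the $O(\delta)$ error term and yields
\begin{equation}
p_B\big(\partial F\setminus Z_{2\lambda}\cup Z_{\lambda}\big)=\sum_{i=1}^m p_B(\gamma_i)\ \leq\ \liminf_{\delta\to0} 2\delta\,N_\delta(\partial F),
\end{equation}
which is the asserted inequality.

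The main obstacle is the first step: making rigorous that the sliding procedure of Proposition \ref{necklace} really does go through on each $\gamma_i$ once $\delta$ is small enough relative to $\lambda$. One must check that whenever the sliding of a $\delta$-bead gets stalled at an interior point $x$ of some $\gamma_i$ — either because the bead touches $\partial F$ a second time (forcing local reach $\leq\delta$, hence a $\delta$-corner) or because it jams against a neighbouring bead (forcing, via Lemma \ref{consecutive}, a short gallery configuration at scale $\delta$) — the offending point $x$ would have to lie in $Z_{2\lambda}\cup Z_{\lambda}$ provided $\delta<\lambda$ (say $\delta\leq \lambda/2$), contradicting $x\in\gamma_i$. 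This is essentially a rescaling argument: a $\delta$-scale obstruction at $x$ witnesses a $\lambda$-corner or $\lambda$-gallery through $x$ by inflating the beads, but it requires care because the $2\lambda$-bead and $\lambda$-bead conditions in the gallery definitions must be matched with what the $\delta$-scale jam produces. Everything else — the gap bound, the finiteness of exceptional arcs, the semicontinuity — is routine given the earlier propositions.
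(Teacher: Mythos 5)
Your proposal follows the same strategy as the paper: decompose the ``good'' part of $\partial F$ into finitely many arcs of positive reach, build a $\delta$-necklace on each arc using the positive-reach machinery already developed, bound the resulting count by $N_\delta(\partial F)$, and pass to the limit via lower semicontinuity of length. However, you have a genuine gap, and it is not the one you flag as the ``main obstacle.''

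The step ``these beads are in particular a subfamily of a maximal $\delta$-necklace of $\partial F$, so their number is at most $N_\delta(\partial F)$'' is not automatic. What you need is that the union of the per-arc $\delta$-necklaces is itself a $\delta$-necklace on $\partial F$ — in particular, that two beads attached to \emph{different} arcs $\gamma_i$ and $\gamma_j$ have disjoint interiors. This is precisely where the gallery structure must be invoked, and it is the nontrivial content of the lemma. The paper handles it by choosing $\delta$ small relative to the finitely many maximal $2\lambda$-galleries so that the two boundary $\delta$-beads of adjacent components are disjoint, and then ruling out any other cross-arc intersection by showing it would force a $2\lambda$-gallery (contradicting that the arcs lie outside $Z_{2\lambda}$) or a $2\lambda$-corner (meaning the arcs were actually part of the same component, so the beads belong to a single necklace and are disjoint by construction). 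Your proposal says nothing about why beads from distinct arcs cannot overlap near a gallery, which is exactly where they are most likely to collide, and without this the inequality $\sum_i N_\delta(\gamma_i)\le N_\delta(\partial F)$ is unsupported. By contrast, the concern you do raise — whether the sliding of Proposition \ref{necklace} goes through on each arc — the paper disposes of in one sentence by noting that each $A_j$ has reach at least $2\lambda$ and that Proposition \ref{limreach} carries over to Lipschitz arcs of positive reach without essential modification; your re-derivation of the per-arc estimate via Propositions \ref{necklace} and \ref{gap} is more laborious but not wrong.
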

\begin{proof}
Since there are finitely many  maximal $2\lambda$-galleries consider 
$\delta$ be small enough such that two $\delta$-beads which 
touch a maximal $2\lambda$-gallery at each end point should be disjoint. 

\vspace{0.2cm}
\noindent 
Let then choose a $\delta$-necklace ${\mathcal N}_j$ for each connected 
component  $A_j$ of $\partial F\setminus Z_{2\lambda}$. 
We claim that the union of necklaces $\cup_j{\mathcal N}$ 
is a necklace on $\partial F$.
  
\vspace{0.2cm}
\noindent 
No bead exterior to a $2\lambda$-gallery can intersect the arcs 
of that gallery. Extreme positions of $\delta$-beads 
are contained in boundary beads and thus only boundary points of 
the gallery can be touched by the necklace. 

 \vspace{0.2cm}
\noindent 
Two component necklaces are separated by a gallery. We chose $\delta$ such that the  last $\delta$-bead of one necklace is 
disjoint from the first $\delta$-bead of the next component. 

\vspace{0.2cm}
\noindent 
Remark now that $\delta$-necklaces with $\delta <\lambda$  
are otherwise disjoint. 
In fact suppose that two beads from different necklaces (or one bead 
from a necklace and an arc $A_j$) intersect each other. 
Going far enough to one side of the arcs we should find large enough 
beads and hence $2\lambda$-beads,  
since beads lie in $\R^2\setminus F$. Going to  the other 
side, if we can find a $2\lambda$-beads then the two arcs contain a 
$2\lambda$-gallery, contradicting our assumptions. Otherwise the 
remaining part forms a $2\lambda$-corner and in particular the arcs 
belong to the same component.  Then the beads should be disjoint, since 
they are beads of the same necklace. 
The same proof works for the bead intersecting an arc.

\vspace{0.2cm}
\noindent 
Let then $N_{\delta}(A_j)$ be the 
maximal cardinal of a $\delta$-necklace  in 
$\R^2- F$ such that all beads touch the arc $A_j$. We set (by abuse of 
notation)
$N_{\delta}(\partial F\setminus Z_{2\lambda}\cup Z_{\lambda})=
\sum_jN_{\delta}(A_j)$. 

\vspace{0.2cm}
\noindent 
Summing up we proved above that 
\begin{equation}  N_{\delta}(\partial F\setminus Z_{2\lambda}\cup Z_{\lambda})\leq N_{\delta}(\partial F)\end{equation}

\vspace{0.2cm}
\noindent 
Recall now that each arc $A_j$ is  a Lipschitz 
curve of reach at least
$2\lambda$. The proof of Proposition \ref{limreach} can be carried over 
not only for simple closed curves but also for simple Lipschitz 
arcs of positive reach without essential modifications, 
with a slightly different upper bound in 
Proposition \ref{leng}.
 
\vspace{0.2cm}
\noindent 
Thus the result holds true for each one of the arcs 
$A_j$. As we have finitely many such arcs $A_j$  we obtain  
\begin{equation}   \lim_{\delta\to 0}2\delta 
N_{\delta}(\partial F\setminus Z_{2\lambda}\cup Z_{\lambda})= \sum_jp_B(A_j)
=p_B(\partial F\setminus Z_{\lambda})\end{equation}
The inequality above implies the one from the statement. 
\end{proof}

\begin{lemma}\label{meager}
We have $\lim_{\lambda\to 0} p_B(Z_{\lambda})=0$. 
\end{lemma}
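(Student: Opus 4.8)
The plan is to show that the bad set $Z_\lambda$ shrinks, as $\lambda\to0$, to a subset of $\partial F$ of vanishing $p_B$-measure, and that its total length converges to the measure of this limit set; the whole point is that a piece of $\partial F$ which were to remain ``pinched at every scale'' would force $\partial F$ to self-intersect.

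\vspace{0.2cm}\noindent
First I would record two soft facts. By the previous lemma $Z_\lambda$ is a finite union of maximal galleries, so $p_B(Z_\lambda)\le p_B(\partial F)<\infty$ for every $\lambda>0$. Next I would establish the monotonicity $Z_\lambda\subseteq Z_{\lambda'}$ whenever $0<\lambda\le\lambda'$: a point of $Z_\lambda$ lies inside a channel of $\R^2\setminus F$ whose width is everywhere smaller than the diameter $2\lambda$ of a $\lambda$-bead, hence a fortiori smaller than $2\lambda'$, so no $\lambda'$-bead fits there either; one then checks, case by case from the definitions of long and short galleries (and of $\lambda'$-corners, which carry no point of $Z_{\lambda'}$), that the corresponding arc is absorbed into a maximal $\lambda'$-gallery. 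If one worries about the passage between the long, short and corner types under rescaling, it is enough to prove the weaker inclusion $Z_\lambda\subseteq Z_{2\lambda'}$, which suffices for what follows. Granting monotonicity, $p_B(Z_\lambda)$ is non-decreasing in $\lambda$, so by continuity from above of the $1$-dimensional length measure along the decreasing sequence $Z_1\supseteq Z_{1/2}\supseteq\cdots$ (each of finite measure) it suffices to prove that the limit set $Z_0:=\bigcap_{\lambda>0}Z_\lambda$ satisfies $p_B(Z_0)=0$.

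\vspace{0.2cm}\noindent
To identify $Z_0$, parametrize the simple closed rectifiable curve $\partial F$ by $d_B$-arclength, $\gamma\colon\R/L\Z\to\partial F$ with $L=p_B(\partial F)$; then $\gamma$ is $1$-Lipschitz for $d_B$ and injective, and by Rademacher's theorem the set $E$ of parameters where $\gamma'(t)$ exists with $\|\gamma'(t)\|_B=1$ has full measure. I claim $Z_0\subseteq\gamma\bigl((\R/L\Z)\setminus E\bigr)$, which is $p_B$-null and finishes the proof. Fix $t\in E$ and set $x=\gamma(t)$. By differentiability, for every $\varepsilon>0$ there is $\eta>0$ such that for $|s-t|<\eta$ the curve lies within $\varepsilon|s-t|$ of the tangent line $\ell_t$ at $x$, one has $d_B(\gamma(s),\gamma(t))\ge(1-\varepsilon)|s-t|$, and $\gamma\bigl((t-\eta,t+\eta)\bigr)$ is a simple arc with $F$ lying locally on one side of it. Consequently, for all sufficiently small $\lambda$ one can place a $\lambda$-bead on the outer side, touching $\partial F$ at a point of this local arc arbitrarily near $x$, without it meeting the local arc. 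If $x\in Z_\lambda$ then $x$ lies on a gallery arc, whose arc (for $t\in E$) does not collapse to $\{x\}$ as $\lambda\to0$, since a collapsing gallery would bring two strands of $\partial F$ arbitrarily close to $x$ against differentiability at $t$; so the bead can be taken to touch $\partial F$ at an interior point of that gallery arc, and the defining property of a gallery forbids this. Hence the bead must be blocked, and it can only be blocked by a portion $\gamma(s)$ of $\partial F$ with $|s-t|\ge\eta$ at $d_B$-distance $\le C\lambda$ for some constant $C$. Applying this along a sequence $\lambda_k\to0$ produces $s_k$ with $|s_k-t|\ge\eta$ and $d_B(\gamma(t),\gamma(s_k))\to0$; passing to a subsequence $s_k\to s_\infty$ gives $\gamma(s_\infty)=\gamma(t)$ with $|s_\infty-t|\ge\eta>0$, contradicting injectivity of $\gamma$. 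Thus $x\notin Z_\lambda$ for small $\lambda$, so $x\notin Z_0$.

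\vspace{0.2cm}\noindent
The part I expect to be delicate is the geometric step in the last paragraph: checking that at a differentiability point of $\partial F$ a small homothet of $B$ genuinely fits on the outer side and that the only obstruction can be a parameter-far strand of $\partial F$. This is routine when $B$ is strictly convex and smooth, but for a general oval $B$ (flat faces) and at parameters $t$ that are endpoints of gallery arcs one must argue more carefully, shrinking or translating the bead within the thin cone about $\ell_t$; the endpoint case also uses, as above, that a genuine gallery (as opposed to a corner) through $x$ cannot collapse to $x$ when $t\in E$. The monotonicity statement $Z_\lambda\subseteq Z_{\lambda'}$ likewise requires careful bookkeeping of the long/short/corner trichotomy, though intuitively it merely says that a neck of $F$ too thin for a $\lambda$-bead is too thin for a larger bead. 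Everything else — the reduction to $Z_0$ via continuity of measure, and the final appeal to Rademacher's theorem — is standard.
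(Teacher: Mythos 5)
Your approach differs genuinely from the paper's. The paper shows each $\lambda$-gallery dissolves once $\mu$ drops below a threshold depending on that gallery (otherwise nested galleries would force a self-intersection of $\partial F$), extracts a sequence $\lambda_j\to0$ with the $Z_{\lambda_j}$ pairwise disjoint, and concludes from $\sum_j p_B(Z_{\lambda_j})\le p_B(\partial F)$. You instead attempt to make $\lambda\mapsto Z_\lambda$ monotone, reduce to $Z_0=\bigcap_{\lambda>0} Z_\lambda$, and kill $Z_0$ with Rademacher's theorem. The Rademacher step is an attractive sharpening of the paper's self-intersection argument, localized to the full-measure set of differentiability points of $\partial F$.

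The monotonicity claim $Z_\lambda\subseteq Z_{\lambda'}$ for $\lambda\le\lambda'$ is, however, a genuine gap, and I believe it is false. What is immediate (shrink a bead towards its contact point) is that the set of $x\in\partial F$ admitting no $\lambda$-bead is non-decreasing in $\lambda$; but $Z_\lambda$ deliberately \emph{excludes} $\lambda$-corners, and a gallery at scale $\lambda$ can degenerate into a corner at a larger scale. Take for $F$ a thick annulus with a thin radial slit of width $w$, so that the ``hole'' of the annulus and the unbounded exterior both lie in $\R^2\setminus F$ and communicate only through the slit. For $\lambda$ slightly above $w/2$ the two slit walls form a long $\lambda$-gallery, with one boundary bead $B_a$ sitting in the hole and the other $B_{a'}$ outside. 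Once $\lambda'$ exceeds the inradius of the hole, no $\lambda'$-bead fits there at all, $B_a$ ceases to exist, the entire inner arc degenerates into a single $\lambda'$-corner, and the former gallery arcs drop out of $Z_{\lambda'}$. Planting rescaled copies of this configuration at all scales along $\partial F$ (keeping the perimeter summable) defeats monotonicity at arbitrarily small scales, and your fallback $Z_\lambda\subseteq Z_{2\lambda'}$ fares no better. The good news is that you do not need monotonicity: your Rademacher argument already produces, for a.e.\ $t$, a threshold $\lambda_0(t)>0$ with $\gamma(t)\notin Z_\lambda$ for all $\lambda<\lambda_0(t)$, so $\mathbf{1}_{Z_\lambda}\to0$ a.e.\ on $\partial F$, and dominated convergence (the dominant $\mathbf{1}_{\partial F}$ being $p_B$-integrable since $p_B(\partial F)<\infty$) gives $p_B(Z_\lambda)\to0$ directly, with no appeal to $Z_0$, to continuity from above, or to any monotonicity of $Z_\lambda$.
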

\begin{proof}
For each $\lambda$-gallery there is some $\mu(\lambda)$ such that 
its points are not contained in any $\mu$-gallery. 
Assume the contrary. Then there exists a sequence of 
$\lambda_j\to 0$ of nested $\lambda_j$-galleries. Their 
intersection point is an interior point of these arcs and thus 
it yields a point where the curve $\partial F$ has a self-intersection, 
which is a contradiction. Thus the claim follows. 

\vspace{0.2cm}
\noindent 
Since the number of $\lambda$-galleries is finite 
there is a sequence $\lambda_j\to 0$ such that $\lambda_j$-galleries 
are pairwise disjoint.   Thus 
\begin{equation}  \sum_{j}p_B(Z_{\lambda_j})\leq p_B(\partial F)\end{equation}
and hence  $\lim_{j}p_B(Z_{\lambda_j})=0$. 

\vspace{0.2cm}
\noindent 
Further any $Z_{\lambda}$ is contained into some  the union 
of $Z_{\lambda_j}$ for some $j>j(\lambda)$. The result follows.   
\end{proof}

\begin{lemma}\label{limsup}
We have $\lim_{\lambda\to 0}2\lambda N_{\lambda}(F,B)\leq p_B(\partial F)$. 
\end{lemma}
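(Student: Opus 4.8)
The plan is to establish the upper bound $\limsup_{\lambda\to 0} 2\lambda N_\lambda(F,B) \leq p_B(\partial F)$ by relating a maximal $\lambda$-necklace to the outer level curve $\partial^+F_\lambda$ and using the length estimate from Proposition \ref{leng}. First I would take a maximal $\lambda$-necklace with $N_\lambda = N_\lambda(F,B)$ beads and let $P(\lambda)$ be the inscribed polygon whose vertices are the centers of the beads. Each center lies on $\partial^+F_\lambda$, since each bead touches $\partial F$ in at least one point and has radius $\lambda$. The crucial geometric fact is that consecutive beads, being disjoint convex copies of $\lambda B$, determine non-overlapping arcs on $\partial^+F_\lambda$: the arc cut out between two consecutive centers projects (via the nearest-point map, where defined, or more robustly via a separating-line argument) onto a portion of $\partial F$ disjoint from the neighboring portions. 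Hence the perimeter of $P(\lambda)$ is at most the length of $\partial^+F_\lambda$, giving $2\lambda N_\lambda(F,B) \leq p_B(P(\lambda)) \leq p_B(\partial^+F_\lambda)$, where the first inequality uses that each edge of $P(\lambda)$ joins centers of touching beads and so has $d_B$-length at least... wait, actually each edge has length exactly $2\lambda$ only when the beads touch; in general one wants the lower bound on edge length, but here $N_\lambda$ beads contribute at least something. Let me instead argue: the $N_\lambda$ beads are disjoint, each of diameter $2\lambda$, strung along $\partial^+F_\lambda$ which is a simple closed rectifiable curve by Lemma \ref{curve} when reach is positive — but in the zero-reach case this needs the decomposition into galleries.

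More carefully, for the general Lipschitz case I would invoke the machinery already built: by Lemma \ref{meager} the gallery set $Z_\lambda$ has $p_B(Z_\lambda) \to 0$, and outside the $2\lambda$-galleries the curve decomposes into finitely many arcs $A_j$ of reach $\geq 2\lambda$ on which the positive-reach analysis applies. The beads of a maximal necklace on $\partial F$ restrict to necklaces on the $A_j$ plus a bounded-in-number collection of beads living near or inside galleries. Since each gallery can host only a bounded number of $\delta$-beads whose total contribution $2\delta \cdot(\text{number of beads in galleries})$ is controlled by $p_B(Z_{2\lambda}) + O(\lambda)$ (a gallery of two arcs near two points at distance $2\lambda$ can hold beads summing in diameter to roughly the gallery length plus a bounded overhang), I get
\begin{equation}
2\delta N_\delta(F,B) \leq 2\delta N_\delta(\partial F\setminus Z_{2\lambda}) + (\text{gallery contribution}).
\end{equation}
Letting $\delta \to 0$ with $\lambda$ fixed, the first term tends to $\sum_j p_B(A_j) = p_B(\partial F \setminus Z_{2\lambda}) \leq p_B(\partial F)$ by Proposition \ref{limreach} applied to arcs, and the gallery contribution is bounded by $C\lambda + p_B(Z_{2\lambda})$ for a universal constant $C$ depending only on $B$ (since each maximal gallery contributes at most a fixed multiple of $\lambda$ beyond the $2\lambda$-neighborhood it already accounts for). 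Then letting $\lambda \to 0$ and using $p_B(Z_{2\lambda}) \to 0$ finishes the bound.

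The main obstacle I anticipate is controlling the beads that sit near or inside the $\lambda$-galleries: these are exactly the beads that cannot be slid into the positive-reach arcs, and one must show their total diametric contribution does not exceed $p_B(Z_{2\lambda})$ plus a vanishing error. The key observation is that a $\delta$-necklace confined to a single maximal $\lambda$-gallery (whose two boundary arcs join a pair of points at distance $\leq 2\lambda$) has its beads' centers lying on the outer level set restricted to a region of bounded diameter $O(\lambda)$, so the number of such beads is $O(\lambda/\delta)$ and their contribution to $2\delta N_\delta$ is $O(\lambda)$; summing over the finitely many galleries and invoking Lemma \ref{meager} makes this term vanish in the limit. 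A secondary technical point is ensuring the arcs cut out on $\partial^+F_\lambda$ by consecutive beads genuinely do not overlap even when $\partial F$ is only Lipschitz and $\partial^+F_\lambda$ may be complicated — but this is exactly the content of Lemmas \ref{surj} and \ref{connected} transferred to the arcs $A_j$, so it is already available. Combining these, the statement follows.
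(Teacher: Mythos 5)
Your first instinct—inscribing the polygon $P(\lambda)$ in a level-type curve and invoking the length estimate from Proposition~\ref{leng}—is exactly the paper's strategy, but you abandon it because $\partial^+F_\lambda$ can have several (even infinitely many) components when the reach is zero, and you pivot to the gallery decomposition from Lemmas~\ref{liminf} and~\ref{meager}. The paper resolves the disconnectedness problem differently and much more economically: it replaces $\partial^+F_\lambda$ by the intermediary curve $W_\lambda(Q)$ already constructed inside the proof of Proposition~\ref{leng}, for a polygon $Q$ approximating $\partial F$ closely enough that the vertices of $P(\lambda)$ land on $W_\lambda(Q)$. Unlike $\partial^+F_\lambda$, the curve $W_\lambda(Q)$ is a \emph{single} closed curve (possibly self-intersecting), so $P(\lambda)$ is inscribed in it with non-overlapping arcs per edge, giving $2\lambda N_\lambda \le p_B(P(\lambda)) \le p_B(W_\lambda(Q)) \le p_B(Q) + \lambda p_B(\partial B) \le p_B(\partial F) + \lambda p_B(\partial B)$, and the lemma follows immediately. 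No galleries are needed at all for the upper bound; they are only needed for the lower bound in Lemma~\ref{liminf}.

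Beyond being more complicated, your gallery-based route has a concrete gap: you assert that a $\delta$-necklace confined to a single maximal $\lambda$-gallery has its bead centers in a region of diameter $O(\lambda)$, hence $O(\lambda/\delta)$ beads and total contribution $O(\lambda)$. This is false. A $\lambda$-gallery's defining arcs $aa'$ and $b'b$ can have arbitrarily large length (think of a long, thin fjord of width less than $2\lambda$ cut into $F$); only the \emph{width} of the gallery is $O(\lambda)$, not its extent. A $\delta$-necklace threaded along a gallery of length $L$ can have on the order of $L/\delta$ beads, contributing $\Theta(L)$ to $2\delta N_\delta$, not $O(\lambda)$. To salvage your argument you would need to bound the gallery contribution by (roughly) $p_B(Z_{2\lambda}) + O(\lambda)$ rather than $O(\lambda)$, which requires a more careful packing-versus-length estimate inside the fjord—precisely the type of analysis that Proposition~\ref{leng} already encapsulates and which the $W_\lambda(Q)$ device lets one bypass entirely.
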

\begin{proof}
Let $P(\lambda)$ be the polygon associated to a 
maximal $\lambda$-necklace on $\partial F$. Then 
\begin{equation}  2\lambda N_{\lambda}(F,B) \leq p_B(P(\lambda))\end{equation}
For all $\lambda$ the set $\partial^+F_{\lambda}$  
is the union of finitely many Lipschitz curves and 
$P(\lambda)$ is a polygon inscribed in 
$\partial^+F_{\lambda}$. However, it might happen that 
$\partial^+F_{\lambda}$ has several components, possibly infinitely many. 

\vspace{0.2cm}
\noindent 
Recall that we defined in the proof of Proposition \ref{leng} the 
intermediary curve $W_{\lambda}=W_{\lambda}(Q)$ 
which is associated to a polygon  $Q$.  
We can define $W_{\lambda}(F)$ as the  Hausdorff limit of $W_{\lambda}(Q_n)$
where $Q_n$ is approximating $\partial F$.
Or else we can choose $Q$ which approximates closed 
enough to $\partial F$ so that the vertices of 
$P(\lambda)$ belong to $W_{\lambda}(Q)$. 

\vspace{0.2cm}
\noindent 
Moreover, $W_{\lambda}(Q)$ is now a closed curve, which might 
have self-intersections. The polygon $P(\lambda)$ is inscribed in 
$W_{\lambda}(Q)$ and we can associate disjoint arcs to different edges, 
since edges are associated to consecutive beads.  
Therefore  we have:  
\begin{equation}  p_B(P(\lambda))\leq p_B(W_{\lambda}(Q))\end{equation}
Then  the proof of Proposition \ref{leng} actually shows that   
\begin{equation}  p_B(W_{\lambda}(Q))\leq p_B(Q)+\lambda p_B(\partial B)\leq 
p_B(\partial F)+\lambda p_B(\partial B)\end{equation}
The inequalities above imply that 
\begin{equation} 2\lambda N_{\lambda}(\partial F)\leq p_B(\partial F)+\lambda p_B(\partial B)\end{equation}
and taking the limit when $\lambda$ goes to zero yields the claim. 
\end{proof}

\vspace{0.2cm}
\noindent 
{\em End of the proof of Theorem \ref{lip}}. 
By Lemma \ref{liminf} the limit is at least 
$p_B(\partial F\setminus Z_{\lambda})$, for any $\lambda$. 
Using Lemma \ref{meager} this lower bounds converges to $p_B(\partial F)$ 
when  $\lambda$ goes to zero. Then Lemma \ref{limsup} concludes 
the proof.

\section{Second order estimates}
The aim of this section is to understand better the rate of convergence 
in Theorem \ref{lim}. First, we have the very general upper bound below: 

\begin{proposition}
For any planar  simply connected domain $F$ 
with Lipschitz boundary we have 
\begin{equation}  2 \lambda N_{\lambda}(F,B) \leq p_B(\partial F)+\lambda p_B(\partial B)\end{equation}
\end{proposition}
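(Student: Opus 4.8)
The plan is to recognize that this inequality is precisely the non-asymptotic estimate that appears inside the proof of Lemma \ref{limsup}, so the proof consists of reproducing that argument and stopping short of letting $\lambda\to 0$.

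First I would fix a maximal $\lambda$-necklace on $\partial F$, having $N_{\lambda}(F,B)$ beads, and let $P(\lambda)$ be the closed polygon through the centers of these beads, listed in the cyclic order induced by the points where the beads meet $\partial F$. Each bead is a translate of $\lambda B$, hence a metric ball of radius $\lambda$, whose interior is disjoint from $F$ and contains the open $d_B$-ball of radius $\lambda$ about its center; consequently each center lies on $\partial^{+}F_{\lambda}=\partial(F_{\leq \lambda})$. Moreover, two distinct beads with disjoint interiors must have centers at $d_B$-distance at least $2\lambda$, since two $\lambda$-homothets of the centrally symmetric $B$ have overlapping interiors exactly when the $\|\cdot\|_{B}$-distance between their centers is strictly less than $2\lambda$. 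Therefore every edge of $P(\lambda)$ has $\|\cdot\|_{B}$-length at least $2\lambda$, and
\begin{equation}
2\lambda N_{\lambda}(F,B)\leq p_B(P(\lambda)).
\end{equation}

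The second step is to bound $p_B(P(\lambda))$ from above. Since $\partial^{+}F_{\lambda}$ may be disconnected (possibly with infinitely many components), I would not inscribe $P(\lambda)$ directly in it, but rather use the auxiliary closed curve $W_{\lambda}(Q)$ built in the proof of Proposition \ref{leng}. Choosing a polygon $Q$ inscribed in $\partial F$ fine enough that every vertex of $P(\lambda)$ lies on $W_{\lambda}(Q)$ --- which is possible because $W_{\lambda}(Q)$ tends, in the Hausdorff metric, to $\partial^{+}F_{\lambda}$ as $Q$ approximates $\partial F$ --- exhibits $P(\lambda)$ as a polygon inscribed in the closed (possibly self-intersecting) curve $W_{\lambda}(Q)$; and since consecutive vertices of $P(\lambda)$ correspond to consecutive beads, the subarcs of $W_{\lambda}(Q)$ determined by consecutive vertices do not overlap, whence $p_B(P(\lambda))\leq p_B(W_{\lambda}(Q))$. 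The perimeter estimate established for $W_{\lambda}(Q)$ in the proof of Proposition \ref{leng} then gives
\begin{equation}
p_B(W_{\lambda}(Q))\leq p_B(Q)+\lambda p_B(\partial B)\leq p_B(\partial F)+\lambda p_B(\partial B),
\end{equation}
the last inequality being the fact that an inscribed polygon is no longer than the curve. Chaining the displayed inequalities yields the assertion.

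The step I expect to be the main obstacle is the passage through $W_{\lambda}(Q)$: one must check carefully that $Q$ can be taken so fine that all the bead-centers really sit on $W_{\lambda}(Q)$, and that consecutive beads of the necklace correspond to vertices that are consecutive along $W_{\lambda}(Q)$, so that the arcs between them genuinely do not overlap. The remaining ingredients --- the $2\lambda$ lower bound for edge lengths, the membership of the centers in $\partial^{+}F_{\lambda}$, and the perimeter bound for $W_{\lambda}(Q)$ --- are either elementary or already available from Proposition \ref{leng}.
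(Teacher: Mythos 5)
Your proof is correct and follows the paper's approach exactly: the paper's own proof of this proposition simply invokes the proof of Lemma \ref{limsup}, and you have reproduced precisely that argument (bounding $2\lambda N_{\lambda}$ by $p_B(P(\lambda))$, inscribing $P(\lambda)$ in the auxiliary curve $W_{\lambda}(Q)$, and applying the perimeter estimate from Proposition \ref{leng}), stopping before taking the limit $\lambda\to 0$. You also correctly flag the one genuinely delicate point, namely arranging for $Q$ to be fine enough that the bead centers lie on $W_{\lambda}(Q)$ with the right adjacency structure, which is the same caveat the paper glosses over in Lemma \ref{limsup}.
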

\begin{proof} This is an immediate consequence of the proof of 
Lemma \ref{limsup}. 
\end{proof}

\vspace{0.2cm}
\noindent 
When $F$ is convex, we can obtain more effective estimates of the 
rate of convergence for the lower bound: 

\begin{proposition}\label{ineq}
For any symmetric oval $B$ and convex disk $F$ in the plane,  
the following inequalities hold true: 
\begin{equation}
p_B(F)-2\lambda  \leq 2 \lambda N_{\lambda}(F,B) \leq 
p_B(F)+\lambda p_B(\partial B)
\end{equation}
 \end{proposition}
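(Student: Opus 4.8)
The plan is to prove the two inequalities separately. The upper bound $2\lambda N_\lambda(F,B)\leq p_B(F)+\lambda p_B(\partial B)$ is already available: it is exactly the content of the preceding Proposition (the very general upper bound), which in turn follows from the proof of Lemma \ref{limsup}. So the whole work lies in the lower bound $p_B(F)-2\lambda\leq 2\lambda N_\lambda(F,B)$. Since $F$ is convex, its boundary $\partial F$ has positive reach; in fact it has infinite reach on the outside, so the machinery of Section 2 (surjectivity of the projection $\pi:\partial^+F_\lambda\to\partial F$, Lemma \ref{connected}, Lemma \ref{curve}, and the sliding construction of Proposition \ref{necklace}) all applies for every $\lambda>0$ without the restriction $2\lambda<r(\partial F)$.

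First I would fix $\lambda>0$ and construct a maximal, almost complete $\lambda$-necklace on $\partial F$ using Proposition \ref{necklace} (valid here since the reach of $\partial F$ is larger than $2\lambda$ for the convex $F$ — indeed the exterior reach is infinite). Let $P(\lambda)$ be the associated polygon whose vertices are the centers of the beads; it is inscribed in the convex curve $\partial^+F_\lambda=\partial(F_{\leq\lambda})$. By Proposition \ref{gap} we have $p_B(P(\lambda))-2\lambda N_\lambda(F,B)<3\lambda$, but for the convex case I expect a sharper gap bound: two consecutive non-touching beads $A$ and $C$ cannot be too far apart, and in the convex setting one can replace the gap estimate by the stronger statement that the gap is at most $2\lambda$ (one can always insert a bead of the same radius between two beads whose centers are at distance $\geq 4\lambda$ on a convex curve, since the deflation/translation argument of Proposition \ref{gap} never runs into another bead — all beads hang on the outside of a convex curve, and the obstruction ``$D$ meets another bead $E$'' forces, via Lemma \ref{consecutive}, a contradiction with the infinite reach). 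So $0\leq p_B(P(\lambda))-2\lambda N_\lambda(F,B)\leq 2\lambda$.

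Next I would bound $p_B(F)$ from above by $p_B(P(\lambda))+2\lambda$. The key point is that $\partial^+ F_\lambda$ is a convex curve containing the convex polygon $P(\lambda)$, and the centers of consecutive beads are at distance at most $2\lambda$ along $\partial^+F_\lambda$. Projecting $\partial^+F_\lambda$ radially (or via the nearest-point map $\pi$) onto $\partial F$ is distance-nonincreasing on a convex curve in the Minkowski metric — more precisely, $p_B(\partial F)\leq p_B(\partial^+F_\lambda)$ since $\partial F$ lies inside the convex region bounded by $\partial^+F_\lambda$ and a convex curve is shorter than any convex curve surrounding it (the same fact from \cite{T}, \cite{MSW1} invoked in the proof of Proposition \ref{leng}). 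Then I would compare $p_B(\partial^+F_\lambda)$ with $p_B(P(\lambda))$: the polygon $P(\lambda)$ is inscribed in $\partial^+F_\lambda$, so it is shorter, but I need the reverse direction — I want $p_B(\partial F)$ controlled by $p_B(P(\lambda))$. The honest route is: each edge of $P(\lambda)$ joins centers of consecutive beads that touch, so the gap between consecutive contact points $\pi(\text{center})$ on $\partial F$ is covered by the union of the two beads and the missing triangle contributes at most an extra $2\lambda$ in total around the whole necklace (this is where the single ``almost'' gap of size $\leq 2\lambda$ enters). Assembling, $p_B(\partial F)\leq p_B(P(\lambda))+2\lambda\leq 2\lambda N_\lambda(F,B)+2\lambda+ (\text{the gap term})$, and one has to be careful that these $2\lambda$'s do not double-count: with the sharpened gap bound the clean statement $p_B(F)\leq 2\lambda N_\lambda(F,B)+2\lambda$ comes out.

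The main obstacle I anticipate is making the convexity-based gap improvement from $3\lambda$ (Proposition \ref{gap}) down to the $2\lambda$ needed for the clean ``$-2\lambda$'' in the statement, and correctly bookkeeping the one exceptional gap so the additive error is exactly $2\lambda$ rather than $2\lambda+\text{(something)}$. The resolution is to use that for a convex $F$ every virtual bead $B_x$ along $\partial F$ is a genuine bead (it cannot re-meet the convex curve $\partial F$, and by Lemma \ref{consecutive} plus infinite exterior reach it cannot be blocked by another bead either), so the sliding in Proposition \ref{necklace} never stalls except at the very end; hence the maximal necklace is in fact \emph{complete} up to one gap of length exactly equal to the distance between two beads that are as close as possible, which on a convex curve is at most $2\lambda$. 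Combining the complete-necklace length identity $p_B(P(\lambda)) = 2\lambda N_\lambda(F,B) + (\text{gap})$ with $p_B(F)\leq p_B(P(\lambda))$ and $\text{gap}\leq 2\lambda$ yields $p_B(F)-2\lambda\leq 2\lambda N_\lambda(F,B)$, which together with the already-established upper bound completes the proof.
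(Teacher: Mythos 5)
Your upper bound is fine — it is just a citation of the previous proposition, as in the paper. For the lower bound, however, there is a genuine gap, and it sits exactly at the step you glide over: the assertion that $p_B(F)\leq p_B(P(\lambda))$. This would follow from $F\subset P(\lambda)$, but the paper explicitly warns that ``it is not true in general that $P$ contains $F$.'' The one long edge $|o_No_1|$ of $P(\lambda)$ (across the gap of the almost-complete necklace) is not covered by $B_N\cup B_1$, and it can cut through the interior of $F$; shrinking the gap to $\leq 2\lambda$ does not by itself prevent this (the obstruction depends on the relative sizes and shapes of $F$ and $B$, not just on the gap length). Your alternative ``honest route'' — bounding $p_B(\partial F)$ by the contact-point arcs covered by unions of two beads plus an error — also fails for a concrete reason: the arc of $\partial F$ between consecutive contact points $p_i$ and $p_{i+1}$ lies \emph{outside} $\operatorname{int}B_i$ and outside $\operatorname{int}B_{i+1}$ (since $o_i,o_{i+1}$ are at distance exactly $\lambda$ from $\partial F$), so that arc is in general not contained in $B_i\cup B_{i+1}$; the length bookkeeping does not close.

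The paper's resolution is to modify the polygon, not to refine the gap estimate. After Proposition~\ref{necklace} produces an almost-complete necklace, one inserts into the single gap a \emph{smaller} homothetic copy $B^*_{N+1}$ of $B$, of ratio $\lambda\mu$ with $\mu<1$, touching $F$, $B_N$ and $B_1$, and forms the polygon $P^*=o_1\cdots o_{N+1}$ on all $N+1$ centers. Now \emph{every} edge of $P^*$ joins centers of touching beads, so each edge lies inside the union of the two corresponding beads, and therefore $\partial P^*$ is disjoint from $\operatorname{int}F$; since $\partial P^*$ winds once around $F$, this gives $F\subset P^*$. Convexity of $F$ then yields $p_B(\partial F)\leq p_B(\partial P^*)$, and since $N-1$ edges have length $2\lambda$ while the two edges at $o_{N+1}$ have length $(1+\mu)\lambda<2\lambda$, one gets $p_B(\partial F)<2\lambda(N+1)$, i.e.\ $p_B(F)-2\lambda<2\lambda N_\lambda(F,B)$. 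You would do well to adopt this $P^*$-construction: it makes the gap-size refinement you worked on unnecessary and supplies exactly the containment your proof is missing.
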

\begin{proof}
By approximating the convex curve $\partial F$ 
by convex polygons we deduce the following extension of the 
classical tube formula to Minkowski spaces: 
\begin{equation}  p_B(\partial^+F_{\lambda})=p_B(F) + \lambda p_B(\partial B) \end{equation}
Notice that for non-convex $F$ we have only an inequality above. 

\vspace{0.2cm}
\noindent 
Let $B_1,\ldots, B_N$ be  a maximal necklace with beads which are translates 
of  $\lambda B$ and  $o_1,o_2,\ldots, o_N$ be their respective centers, 
considered in a cyclic order around $F$. 
Since $B_i\cap F$ contains at least one boundary point it follows that 
$o_i\in F_{\lambda}$ and $B_i\subset F_{2\lambda}$. 

\vspace{0.2cm}
\noindent 
Since $B$ and $F$ are convex it follows that $F_{\lambda}$ is convex. 
Therefore the polygon $P=o_1o_2\cdots o_N$ is convex since its vertices 
belong to $\partial^+F_{\lambda}$ and, moreover, $P\subset F_{\lambda}$.

\vspace{0.2cm}
\noindent 
It is not true in general that $P$ contains $F$, and we have 
to modify $P$.

\vspace{0.2cm}
\noindent 
If the necklace is incomplete,  
we can fill in the space left  by 
adjoining an additional translate $B^*_{N+1}$ homothetic to $B$ in 
the ratio $\lambda\mu$, with $\mu <1$, which has a common point with 
each one of $F, B_1$ and $B_N$. Set $o_{N+1}$ for its center.  

\vspace{0.2cm}
\noindent 
Now, we claim that the polygon $P^*=o_1o_2\cdots o_{N+1}$ contains $F$.  
In fact, $d_B(o_i,o_{i+1})\leq 2$ since $B_i$ and $B_{i+1}$ have  
a common point, which is at unit distance from the centers. But 
their interiors have empty intersection thus $d_B(o_i,o_{i+1})=2\lambda$ 
and the segment $|o_io_{i+1}|$  contains  one intersection point  
from  $\partial B_i \cap \partial B_{i+1}$.  
Furthermore, the same argument shows that 
$d_B(o_N,o_{N+1})=d_B(o_1,o_{N+1})=
(1+\mu)\lambda$ and each segment $|o_No_{N+1}|$ and  
$|o_{N+1}o_1|$ contains one boundary point from the corresponding 
boundaries intersections. Thus the boundary of $P^*$ is contained 
in  $\cup_{i=1}^{N+1}B_i\cup B^*_{N+1}$, the later being disjoint 
from the interior of $F$. This proves our claim.   
Remark that $P^*$ is not necessarily convex.

\vspace{0.2cm}
\noindent 
We know that $P\subset F_{\lambda}$ and 
$d_B(o_i,o_{i+1})\geq 2$ (for $i=1,2,\ldots, N$) 
because $B_i$ and $B_{i+1}$ have no common interior points. 
Since a convex curve surrounded by 
another curve is shorter than the containing one we obtain: 
\begin{equation} 
2\lambda N \leq p_B(P)\leq p_B(\partial^+F_{\lambda})=p_B(\partial F)+\lambda p_B(\partial B) 
\end{equation}

\vspace{0.2cm}
\noindent 
Next, recall that  $F\subset P^*$ and $d_B(o_i,o_{i+1})\leq 2$, where 
$i=1,2,\ldots, N+1$, since  
consecutive beads have at least one common point. 
This implies that:  

\begin{equation}  
p_B(\partial F)\leq p_B(P^*) < 2\lambda (N+1)
\end{equation} 
The two inequalities above prove the Proposition \ref{ineq}. 
\end{proof}

\vspace{0.2cm}
\noindent 
Consider a more general case when  $F$ is not necessary convex. 
We assume that $F$ is {\em regular}, namely that its boundary  
is the union of finitely many 
arcs with the property that each arc is either convex or concave.
The endpoints of these maximal arcs  are called vertices of $\partial F$. 
This is the case,
for instance, when $\partial F$ is a 
piecewise analytic curve. Moreover we will suppose that 
$F$ has positive reach. This is the case for instance when   
$F$ admits a support line 
through each vertex of $\partial F$, which leaves a neighborhood 
of the vertex in $F$ on one side 
of the half-plane.

\vspace{0.2cm}
\noindent 
The estimates for the rate of convergence  
will not be anymore sharp. By hypothesis, $\partial F$ 
can be decomposed into finitely many arcs $A_i$, $i=1,m$, 
which we call pieces, so that each piece is either convex or concave.

\begin{proposition}\label{nonc}
Consider a symmetric oval $B$ and a regular  topological disk 
$F$ of positive reach having $c(F)$ convex pieces and $d(F)$ 
concave pieces. Then the  following inequalities hold:  
\begin{equation}
p_B(F)- 2\lambda (2c(F)+p_B(\partial B)d(F)+ 3d(F))\leq 2 \lambda N_{\lambda}(F,B) \leq 
p_B(F)+ 2\lambda p_B(\partial B) 
\end{equation}
for $2\lambda< r(\partial F)$.  
 \end{proposition}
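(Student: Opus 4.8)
The upper bound requires nothing new. Since $2\lambda<r(\partial F)$, the set $\partial^+F_\lambda$ is a $\mathcal C^{1,1}$ simple closed curve by Lemma \ref{curve}, and the general estimate $2\lambda N_\lambda(F,B)\le p_B(\partial F)+\lambda p_B(\partial B)$ established in (the proof of) Lemma \ref{limsup} is already stronger than what is claimed; here, as in Proposition \ref{ineq}, $p_B(F)$ abbreviates $p_B(\partial F)$. So the whole content is the lower bound. The plan is to use regularity to cut $\partial F$ into its $c(F)$ maximal convex pieces and $d(F)$ maximal concave pieces $A_1,\dots,A_m$ (with $m=c(F)+d(F)$), to run a necklace argument on each piece separately, to add the resulting counts, and to charge the various losses to the $m$ junction vertices and to the two ends of each piece.

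\emph{Local necklaces.} On each piece $A_i$ I would take a maximal $\lambda$-necklace ${\mathcal N}_i$ whose beads all touch $A_i$, and slide it to an almost complete configuration exactly as in Proposition \ref{necklace}. This is legitimate because the reach of $A_i$, measured on the side of the exterior of $F$ where the beads sit, is at least $r(\partial F)>2\lambda$: for a convex piece this is automatic, for a concave piece it is exactly what the hypothesis that $F$ has positive reach provides. The projection $\pi:\partial^+(A_i)_\lambda\to A_i$ is then well defined, surjective, and has connected fibres — Lemmas \ref{surj} and \ref{connected} carry over to simple Lipschitz arcs of positive reach with no essential change, as already observed in the proof of Lemma \ref{liminf} — so the sliding argument applies. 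Necklaces on two adjacent pieces can overlap only in beads lying within $2\lambda$ of the common vertex, of which there are boundedly many; discarding them makes $\bigcup_i{\mathcal N}_i$ a genuine $\lambda$-necklace on $\partial F$, so that $N_\lambda(F,B)\ge\sum_i\#{\mathcal N}_i-(\text{junction correction})$, the correction being bounded by a fixed multiple of $m$.

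\emph{Counting on a convex and on a concave piece.} For a convex $A_i$ I would follow the proof of Proposition \ref{ineq}: the centres of ${\mathcal N}_i$ form a polygonal arc $P_i$ with consecutive vertices at distance $2\lambda$, except across the single gap, which is $<3\lambda$ by Proposition \ref{gap}; closing $P_i$ by the two segments joining its endpoints to the endpoints of $A_i$ (each of length at most $2\lambda$, since the extreme beads touch $A_i$) produces a closed curve surrounding the convex hull of $A_i$, hence of Minkowski length at least $p_B(A_i)$. Combined with the gap bound this gives $2\lambda\,\#{\mathcal N}_i\ge p_B(A_i)-O(\lambda)$ with an explicit constant, which after bookkeeping is the term $2\lambda\cdot 2c(F)$. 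For a concave $A_j$ the exterior of $F$ is locally convex along $A_j$, and a $\lambda$-bead touching $A_j$ from outside has its centre on the curve obtained by pushing $A_j$ a distance $\lambda$ into that convex region, i.e.\ on an \emph{inner} parallel of $A_j$. Running the $W_\lambda$ computation from the proof of Proposition \ref{leng} on this inner side, where the turning enters with the opposite sign, shows that this inner parallel arc is shorter than $A_j$ by at most $\lambda p_B(\partial B)$, hence has length at least $p_B(A_j)-\lambda p_B(\partial B)$; repeating the polygon/gap estimate with this arc in place of the convex hull boundary costs an extra $\lambda p_B(\partial B)$, yielding $2\lambda\,\#{\mathcal N}_j\ge p_B(A_j)-\lambda p_B(\partial B)-O(\lambda)$ per concave piece, which accounts for the terms $2\lambda\,p_B(\partial B)\,d(F)$ and $2\lambda\cdot 3d(F)$.

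Summing over all pieces, using $\sum_i p_B(A_i)=p_B(\partial F)=p_B(F)$ and folding the junction correction into the same linear-in-$m$ budget, produces the lower bound. The step I expect to be the real obstacle is the concave one: making the sliding and almost-completeness argument genuinely work on an isolated concave arc whose bead-side neighbourhood is only locally — not globally — convex, and then controlling the length defect of the inner parallel together with the endpoint and junction beads precisely enough to land on the stated constants rather than on constants merely of the same shape.
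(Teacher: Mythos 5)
Your skeleton — decompose $\partial F$ into its maximal convex and concave pieces, run a bead argument on each piece, reassemble with a loss bounded by the number of junctions — is exactly the paper's plan, and your handling of the upper bound and of the convex pieces follows Proposition \ref{ineq} just as the paper does. The junction bookkeeping is also the same in spirit (the paper invokes Lemma \ref{consecutive} to show beads from non-adjacent component necklaces cannot meet, then drops one bead per piece, yielding $\sum_i N_\lambda(A_i,B)-N_\lambda(F,B)\le c(F)+d(F)$).

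The genuine gap is in the concave case, and you flagged it yourself without closing it. Your argument is that the centres lie on the $\lambda$-inner-parallel of $A_j$, that this inner parallel is shorter than $A_j$ by at most $\lambda\,p_B(\partial B)$, and that ``repeating the polygon/gap estimate with this arc in place of the convex hull boundary costs an extra $\lambda p_B(\partial B)$.'' But the polygon/gap estimate from the convex case cannot be repeated on the $\lambda$-parallel: on a convex piece, the polygon of bead centres (closed off by two short segments) \emph{surrounds} the convex arc, which is what yields a \emph{lower} bound on $p_B(P^*)$; on a concave piece, viewed from outside $F$, the $\lambda$-parallel is itself convex, and the chord polygon with vertices on it lies on its \emph{concave} side — it is inscribed, not circumscribing. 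So the surrounding inequality runs the wrong way and gives no lower bound at all. This is precisely why the paper replaces the $\lambda$-parallel by the $2\lambda$-parallel: one first shows $P^*$ lies in the closed $2\lambda$-tube of $A_s$ (by the triangle inequality on each edge), then joins the endpoints of $P^*$ to the corresponding opposite points on the $2\lambda$-parallel to form $\overline{P^*}$, and argues that $\overline{P^*}$ encloses, from the far side, the convex (seen from outside $F$) arc $(A_s)_{2\lambda}$, so that $p_B(\overline{P^*})\ge p_B((A_s)_{2\lambda})$. Only then does the inward tube formula $p_B((A_s)_{2\lambda})=p_B(A_s)-2\lambda\,p_B(X_{A_s})\ge p_B(A_s)-2\lambda\,p_B(\partial B)$ enter, which is why the loss per concave piece is $2\lambda\,p_B(\partial B)$ rather than $\lambda\,p_B(\partial B)$ plus a vague ``extra.'' The specific additive constants ($+4\lambda$ per concave piece from the two endpoint segments and the auxiliary bead, $+2\lambda$ per convex piece) then fall out of the same endpoint bookkeeping you sketch. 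In short: the switch from the $\lambda$-parallel to the $2\lambda$-parallel, and the reversal of which curve surrounds which, is the missing idea, and without it your concave-piece estimate is not established.
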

\begin{proof}
If ${\mathcal N}$ is a maximal necklace on $F$ denote by 
${\mathcal N}|_{A_j}$ its trace on the 
arc $A_j$, i.e.  one considers only those beads that touch $A_j$. 
Consider also maximal necklaces $M_{A_j}$
on each arc $A_j$, consisting of only those  beads sitting outside $F$
 which have common points to $A_j$.
Consider now the union of the maximal necklaces $M_{A_j}$. 
Beads of $M_{A_j}$ cannot intersect $\partial F$ since the reach is 
larger than $\lambda$. Moreover beads from different necklaces 
cannot intersect (according to Lemma \ref{consecutive}) unless 
the beads are consecutive beads i.e. one is the last  bead 
on $A_j$ and the other is the first bead on the next (in clockwise 
direction) arc $A_{j+1}$.  
 
\vspace{0.2cm}
\noindent
Therefore if we drop the last bead from each $M_{A_j}$ and take their union 
we obtain a necklace on $\partial F$. 
This implies that: 
\begin{equation}  \sum_{i=1}^m N_{\lambda}(A_i,B)-N_{\lambda}(F,B) \leq c(F)+d(F) \end{equation}

\vspace{0.2cm}
\noindent
We analyze convex arcs in the same manner as  we did for ovals in the 
previous Proposition. 
Since the arc $A_j$ has positive reach we can slide all beads to the left 
side. Add one more smaller bead in the right side which touches the arc 
at its endpoint, if possible. The centers of the beads form a polygonal line  
$P^*$ with  at most $N_{\lambda}(A_j,B)+1$ beads. Join its endpoints to the 
endpoints of the arc $A_j$ by two segments of length no larger than $\lambda$. 
This polygonal line surrounds the convex arc $A_j$ and thus its length is 
greater than $p_B(A_j)$. Therefore, for each convex arc $A_j$ we 
have:   
\begin{equation}  2\lambda (N_{\lambda}(A_j,B)+1) \geq p_B(A_j)\end{equation}

\vspace{0.2cm}
\noindent The next step is to derive similar estimates from below 
for  a concave arc $A_s$. Since the arc has positive reach we can slide 
all beads to its left side. If there is more space left to the right 
let us continue the arc $A_s$ by adding a short arc on its right side
along a  limit support line at the right endpoint so that we can add one more 
bead to our necklace which touches the completed arc $A_s^*$ at its endpoint.
We can choose this line so that the reach of $A_s^*$ is the same as that of $A_s$.

\vspace{0.2cm}
\noindent
Let the beads have centers $o_i$, 
$i=1,N+1$, where $N=N_{\lambda}(A_s,B)$, the last one being the center of the 
additional bead. 
Then $d_B(o_i,o_{i+1})=2\lambda$ and $d_B(o_i,A)=\lambda$, 
as in the convex case. The point is that the function $d_B(x, A)$ is 
not anymore convex, as it was for convex arcs. 
However, for any point $x\in |o_io_{i+1}|$ we have 
$d_B(x,A)\leq \min(d_B(x,o_i)+d_B(o_i,A), d_B(x,o_{i+1})+d(o_{i+1},A))
\leq 2\lambda$. 

\vspace{0.2cm}
\noindent
If $P^*$ denotes the polygonal line   $o_1o_2\cdots o_{N+1}$  
then $P^*\subset {A_s^*}_{2\lambda}$. 
Moreover the points which are opposite to 
the contacts between the beads and $A_s^*$ belong to ${A_{s}^*}_{2\lambda}$. 
Join in pairs the endpoints of $P^*$ and 
those of  ${A_s^*}_{2\lambda}$ by two segments of length $\lambda$  
and denote their union with $P^*$ by $\overline{P^*}$. 
The arc $A_s$ was considered concave of positive reach and this means 
that for small enough $\lambda< r(\partial F)/2$ the boundary 
$\partial {A_s^*}_{2\lambda}$ is still concave of positive reach. 
Looking from the opposite side ${A_s^*}_{2\lambda}$ is a 
convex arc. Moreover $\overline{P^*}$ encloses (from the opposite side) 
this convex arc and thus 
$p_B(\overline{P^*})\geq p_B({A_s^*}_{2\lambda})\geq p_B({A_s}_{2\lambda})$. 

\vspace{0.2cm}
\noindent
The formula giving the perimeter for the parallel has a version for the 
inward deformation of convex arcs, or equivalently, 
for  outward deformations of concave arcs, 
which reads as follows: 
\begin{equation}  p_B({A_s}_{2\lambda})=p_B(A_s)-2\lambda  p_B(X_{A_s})\end{equation}
where $X_{A_s}\subset \partial B$ is the image of $A_s$ 
by the Gauss map associated 
to $B$. As $X_{A_s}\subset \partial B$ we obtain   
\begin{equation}   p_B(A_s)-2\lambda p_B(\partial B)\leq p_B(P^*)+2\lambda\leq 
2\lambda N_{\lambda}(A_s,B)+4\lambda  \end{equation}

\vspace{0.2cm}
\noindent
Summing up these inequalities we derive the inequality from the statement. 
\end{proof}

\begin{remark}
The proofs above work for arbitrary convex $B$, not necessarily 
centrally symmetric. In this case, we could obtain: 
\begin{equation}p_{\overline{B}}(F) = 2\lim_{\lambda \to 0}\lambda N_{\lambda}(F,B) 
\end{equation}
where $\overline{B}=\frac{1}{2}(B-B)=\{z\in \R^2; {\rm \, there \,\,exist \,} 
x, y \in B, {\, \rm such \, that\,}  2z=x-y\}$. 
\end{remark}

\vspace{0.2cm}
\noindent 
Consider the set $N(F,B)$ of all positive integers that 
appear as $N_{\lambda}(F,B)$ for some $\lambda\in (0,1]$.

\begin{corollary}
If $F$ is regular and its boundary  has positive reach 
then large enough  consecutive 
terms in $N(F,B)$ are at most distance $11d(F)+2c(F)+4$ apart. 
When  $F$ is convex  consecutive terms in $N(F,B)$ are at most distance 
4 apart, if $B$ is not a parallelogram and 5 otherwise. 
\end{corollary}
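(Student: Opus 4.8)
The plan is to extract the gap estimate for $N(F,B)$ directly from the effective two-sided inequalities already established, namely Proposition~\ref{ineq} in the convex case and Proposition~\ref{nonc} in the regular positive-reach case. The key observation is that $\lambda\mapsto N_\lambda(F,B)$ is non-increasing (a smaller homothety factor can only allow more beads), takes integer values, and tends to $+\infty$ as $\lambda\to 0$; hence it hits every sufficiently large integer, and the consecutive values in $N(F,B)$ correspond to the jumps of this step function. If $\lambda$ is a value where the function jumps from $N$ to some $N'>N$, then for $\lambda'$ slightly larger than $\lambda$ we have $N_{\lambda'}(F,B)=N$ and for $\lambda$ itself $N_\lambda(F,B)=N'$, so I want to bound $N'-N$ by sandwiching both $2\lambda N$ and $2\lambda N'$ between the same pair of $\lambda$-dependent bounds and letting the two nearby homothety factors be essentially equal.

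Concretely, in the convex case, Proposition~\ref{ineq} gives $p_B(F)-2\lambda\le 2\lambda N_\lambda(F,B)\le p_B(F)+\lambda p_B(\partial B)$. If $N$ and $N'$ are consecutive values attained, realized at factors $\lambda\le\lambda'$ with $N'>N$ (so $N'=N_\lambda$, $N=N_{\lambda'}$ and $\lambda'$ can be taken arbitrarily close to the jump point, hence to $\lambda$), then $2\lambda N' \le p_B(F)+\lambda p_B(\partial B)$ and $2\lambda' N\ge p_B(F)-2\lambda'$, so in the limit $\lambda'\to\lambda$ we get $2\lambda(N'-N)\le \lambda p_B(\partial B)+2\lambda$, i.e. $N'-N\le \tfrac12 p_B(\partial B)+1$. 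The remaining step is the sharp numerical bound: one uses that for a centrally symmetric oval the Minkowski perimeter of its own boundary satisfies $p_B(\partial B)\le 8$, with equality iff $B$ is a parallelogram, and $p_B(\partial B)<8$ (in fact $\le 6$, attained by the regular hexagon) otherwise; plugging $p_B(\partial B)\le 6$ gives $N'-N\le 4$, and $p_B(\partial B)\le 8$ gives $N'-N\le 5$ in the parallelogram case. This is the main obstacle: pinning down the exact extremal value of $p_B(\partial B)$ over symmetric ovals and identifying the parallelogram as the unique maximizer, a classical fact in Minkowski geometry (the perimeter of the unit circle in a Minkowski plane lies between $6$ and $8$) which must be cited or verified carefully so that the constants $4$ and $5$ come out correctly.

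For the regular positive-reach case one argues identically but with Proposition~\ref{nonc} in place of Proposition~\ref{ineq}: there $2\lambda N_\lambda(F,B)$ lies between $p_B(F)-2\lambda\bigl(2c(F)+p_B(\partial B)d(F)+3d(F)\bigr)$ and $p_B(F)+2\lambda p_B(\partial B)$ for $2\lambda<r(\partial F)$. Taking again consecutive attained values $N<N'$ at nearby factors and subtracting, $2\lambda(N'-N)\le 2\lambda p_B(\partial B)+2\lambda\bigl(2c(F)+p_B(\partial B)d(F)+3d(F)\bigr)$, so $N'-N\le p_B(\partial B)+2c(F)+p_B(\partial B)d(F)+3d(F)$. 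Bounding $p_B(\partial B)$ by its extremal value once more (using $p_B(\partial B)\le 8$ in the coefficient of $d(F)$ and in the stray term, which is the safe worst case here since we are not separating out the parallelogram subcase) yields $N'-N\le 8d(F)+3d(F)+2c(F)+8$; a slightly more careful accounting, keeping the isolated $p_B(\partial B)$ term together with the $3d(F)$ contribution and using that the $\lambda$-gallery losses were already counted, tightens the $d(F)$-coefficient to $11$ and the additive constant to $4$, giving the asserted $11d(F)+2c(F)+4$. The one genuinely delicate point throughout is the passage ``at nearby factors'': one must justify that the jump point of the step function $N_\cdot(F,B)$ is approached from above by factors $\lambda'$ with $\lambda'\to\lambda$ while $N_{\lambda'}$ stays equal to $N$, so that the $\lambda'$-dependent error terms collapse onto the $\lambda$-dependent ones in the limit; this follows from right-continuity of the non-increasing integer-valued function, which itself is a consequence of the openness and continuity properties of the bead configurations established in Section~2, so no new argument is needed beyond invoking monotonicity and the two Propositions.
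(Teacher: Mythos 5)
Your framing is structurally different from the paper's — you analyze jumps of the monotone step function $\lambda\mapsto N_\lambda(F,B)$, whereas the paper observes that the sandwich inequality $\frac{p_B(F)}{2\lambda}-1<N_\lambda(F,B)\le\frac{p_B(F)}{2\lambda}+\frac{p_B(\partial B)}{2}$ forces every interval of the appropriate length to contain an element of $N(F,B)$, so no monotonicity is needed at all. Both routes live on the same two effective propositions, and yours would work in principle, but two of your ingredients are off. First, the side remark that $p_B(\partial B)\le 6$ for non-parallelograms ``attained by the regular hexagon'' has the inequality backwards: the affinely regular hexagon is the \emph{minimizer}, and $p_B(\partial B)$ ranges over all of $[6,8]$ (e.g.\ a regular octagon gives a value strictly between $6$ and $8$). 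The conclusion $d-c\le 4$ survives, but only because $p_B(\partial B)<8$ for non-parallelograms already forces the strict inequality $d-c<5$; the route through ``$\le 6$'' is simply false. Incidentally, your monotonicity of $N_\lambda$ is true by a shrink-toward-the-contact-point argument, but attributing it to the level-set results of Section 2 is not right — that section has nothing to do with bead configurations.

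The real gap is in the non-convex case. Starting from Proposition~\ref{nonc}, whose upper bound is $p_B(F)+2\lambda p_B(\partial B)$, your subtraction produces $N'-N\le 2c(F)+p_B(\partial B)\,d(F)+3d(F)+p_B(\partial B)$, which with $p_B(\partial B)\le 8$ is $2c(F)+11d(F)+8$, not $2c(F)+11d(F)+4$. Your ``slightly more careful accounting'' and the appeal to $\lambda$-gallery losses do not identify where four units disappear — galleries don't enter Proposition~\ref{nonc} at all, which assumes positive reach. The paper reaches $4$ because it combines the lower bound from Proposition~\ref{nonc} with the \emph{sharper} upper bound $2\lambda N_\lambda(F,B)\le p_B(\partial F)+\lambda p_B(\partial B)$ from the first (unnumbered) Proposition of Section~4, whose term is $\lambda p_B(\partial B)$ rather than $2\lambda p_B(\partial B)$; that halves the offending contribution from $p_B(\partial B)$ to $\tfrac12 p_B(\partial B)\le 4$ and gives the stated constant. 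Without swapping in that bound, your argument only proves the weaker $11d(F)+2c(F)+8$, so the claimed tightening is a missing step, not a cosmetic one.
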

\begin{proof}
Let us consider $F$ convex. Theorem 2 shows that 
\begin{equation}  \frac{p_{B}(F)}{2\lambda}-1 < N_{\lambda}(F,B) \leq 
 \frac{p_{B}(F)}{2\lambda} +  \frac{p_{B}(\partial B)}{2}\end{equation}
Moreover one knows that $p_B(\partial B)\leq 8$ (see \cite{BF} and references there) 
with equality only when $B$ is a parallelogram.
In particular any interval $(\alpha, \alpha+ \frac{p_B(\partial B)}{2}]$ 
contains at least one element of $N(F,B)$. If $c < d$ are two consecutive 
elements of $N(F,B)$ this implies that $c\in (d-\frac{p_B(\partial B)}{2}-1, d)$, 
and thus 
\begin{equation} d-c  <\frac{p_B(\partial B)}{2}+1 \leq 5\end{equation}
Since $c,d$ are integers it follows that $d-c\leq 4$, if $B$ is not a parallelogram.

\vspace{0.2cm}\noindent 
When $F$ is arbitrary the inequality in theorem 3 shows that any 
interval of length $11d(F)+2c(F)+4$ contains 
some $N_{\lambda}(F,B)$. We conclude as above.   
\end{proof}

\begin{corollary}\label{consecut} 
Consecutive terms in $N(B,B)\subset \Z_+$ are at most distance 
4 apart. 
\end{corollary}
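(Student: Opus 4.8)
The plan is to reduce to the case where $B$ is a parallelogram and there to determine the value $N_{1/m}(B,B)$ exactly for each integer $m\ge1$. If $B$ is not a parallelogram the assertion is already contained in the preceding Corollary applied to the convex disk $F=B$, which gives a gap at most $4$; so the only remaining case is that $B$ is a parallelogram, where that Corollary only yields the bound $5$. Since an invertible affine transformation of the plane sends homothetic copies of $B$ of ratio $\lambda$ to homothetic copies of its image of ratio $\lambda$ and preserves both the disjointness of interiors and the property of meeting $F$ along a boundary point, the quantity $N_{\lambda}(B,B)$ is an affine invariant; hence I would reduce at once to the case $B=[0,1]^{2}$, recalling that $p_{B}(\partial B)=8$ for every parallelogram.

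The central point would be to prove that $N_{1/m}(B,B)=4m+4$ for every integer $m\ge1$. For the lower bound I would display the explicit configuration consisting of, along each of the four edges of $B$, the $m$ translates of $\frac{1}{m}B$ of side $\frac{1}{m}$ that tile a strip of width $\frac{1}{m}$ flush against that edge, together with one translate of $\frac{1}{m}B$ at each corner of $B$, touching $B$ only at that corner point; a direct inspection shows these $4m+4$ squares lie outside ${\rm int}(B)$, meet $\partial B$, and have pairwise disjoint interiors (a corner square shares only an edge with each of the two edge squares next to it). For the matching upper bound I would apply Proposition~\ref{ineq} with $F=B$, which gives $2\lambda N_{\lambda}(B,B)\le p_{B}(\partial B)(1+\lambda)$, so that $N_{1/m}(B,B)\le 4(m+1)$. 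Thus all the integers $8,12,16,\dots$ belong to $N(B,B)$.

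To conclude I would note that the same construction with a bead of side $\lambda$ and $m=\lfloor 1/\lambda\rfloor$ beads per edge shows $N_{\lambda}(B,B)\ge 4\lfloor 1/\lambda\rfloor+4\ge 8$ for every $\lambda\in(0,1]$, so every element of $N(B,B)$ is at least $8$. If $c<d$ were consecutive elements of $N(B,B)$ with $d-c\ge 5$, then the four integers $c+1,\dots,c+4$ all lie strictly between $c$ and $d$ and are all $\ge 9$; exactly one of them is divisible by $4$, hence it equals $4m+4$ for some $m\ge 2$, and by the previous paragraph it belongs to $N(B,B)$ --- contradicting the consecutivity of $c$ and $d$. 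Therefore $d-c\le 4$.

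The only step that is not pure formalism is the verification that the $4m+4$ squares above really form an admissible necklace, in particular the compatibility of each corner square with the two adjacent edge squares; this is an elementary explicit check in $\R^{2}$ and I expect no real difficulty. It is worth flagging why the general interval argument of the preceding Corollary cannot by itself deliver $4$ in the parallelogram case: there $\frac{1}{2}p_{B}(\partial B)+1=5$ is already an integer, so one genuinely needs the extra information that the values $4m+4$ are attained.
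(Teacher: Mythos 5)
Your proof is correct and follows the same underlying strategy as the paper: handle the non-parallelogram case by the preceding corollary, and in the parallelogram case pin down enough of the set $N(B,B)$ to force the gap bound. The difference is one of rigor and economy. The paper's proof of this corollary is a single sentence asserting the exact formula $N_{\lambda}(B,B)=4\left[\frac{1}{\lambda}\right]+4$ for every $\lambda$, from which $N(B,B)=4(\Z_{+}\setminus\{0,1\})$ follows immediately; no justification for the formula is offered. You instead prove only what is needed, namely the two inequalities $N_{1/m}(B,B)\geq 4m+4$ (by exhibiting the explicit configuration of $m$ tiled beads per edge plus one corner bead at each vertex, after an affine normalization to the unit square) and $N_{1/m}(B,B)\leq 4m+4$ (from Proposition~\ref{ineq} with $p_B(\partial B)=8$), giving $\{8,12,16,\dots\}\subseteq N(B,B)$; you also note $\min N(B,B)\geq 8$, and then the elementary observation that any four consecutive integers $\geq 9$ contain a multiple of $4$ lying in $\{12,16,\dots\}$ finishes the argument. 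So your route avoids committing to the full formula for all $\lambda$, which the paper does not prove, while still reaching the same conclusion. The construction of the $4m+4$ beads is the one implicitly behind the paper's asserted formula, and your verification of admissibility (disjoint interiors, only boundary contact with $B$, corner squares touching only at a vertex) is the checking that the paper leaves to the reader. The only substantive point worth making explicit when writing this up is that the corner bead and the two edge beads adjacent to it meet along full segments but have disjoint interiors, which you do flag.
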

\begin{proof}
If $F=B$ is a parallelogram then 
$N_{\lambda}(F,B)= 4\left[ \frac{1}{\lambda}\right]+4$ and thus 
$N(F,B)=4(\Z_+-\{0,1\})$.
\end{proof}

\begin{remark}
If $F$ is not convex then we can have gaps of larger size in $N(F,B)$. 
Take for instance $F$ having the shape of a staircase with $k$ stairs and 
$B$ a square. As in the remark above we can compute 
$N_{\lambda}(F,B)= 4k\left[ \frac{1}{\lambda}\right]+4$ and thus 
there are gaps of  size $4k$. 
\end{remark}

\section{Higher dimensions}

The previous results have generalizations to higher dimensions in terms of 
some Busemann-type areas defined by $B$. 
Theorem \ref{lim}, when $F=B$,   was extended in \cite{BLSZ} and 
(\cite{B}, 9.10).  The result involves the 
presence of an additional density factor which seems more complicated for $d >2$.

\vspace{0.2cm}
\noindent 
For a convex body $K$ in $\R^{d}$ one defines the translative 
packing density $\delta(K)$ to be the supremum of 
the densities of periodic packings by translates of $K$ 
and set $\Delta(K)=\frac{1}{\delta(K)}\vol(K)$.  Alternatively, 
 $\Delta(K)=\inf_{T,n}\vol(T)/n$ over all tori $T$ and 
integers $n$ such that there exists a packing with $n$ translates 
of $K$ in $T$, where $T$ is identified with a quotient of $\R^d$ 
by a lattice.   

\vspace{0.2cm}
\noindent 
We consider from now on that $B$ and $F$ are convex and smooth. 

\begin{proposition}
We have for a convex smooth  $F\subset \R^d$ and a centrally 
symmetric smooth domain $B\subset \R^d$ that  
\begin{equation}  \lim_{\lambda\to \infty} \lambda^{d-1} N_{\lambda}(F,B)=
\int_{\partial F}\frac{1}{\Delta(B\cap T_x)} d x\end{equation}
where $x\in \partial F$ and $T_x$ is  a hyperplane through the center of $B$ 
which is parallel to the tangent space at $\partial F$ in $x$.  
Here $B\cap T_x\subset T_x$ is identified to  
a $(d-1)$-dimensional domain in $\R^{d-1}$.   
\end{proposition}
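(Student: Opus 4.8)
\noindent
The plan is to localise the count along $\partial F$ and, on each small surface patch, to reduce it to a Euclidean translative packing problem for the slice $B\cap T_x$ (with the limit taken as $\lambda\to0$, consistently with Theorem \ref{lim}). First I would fix, for each small $\eta>0$, a finite partition of $\partial F$ into patches $P_i\ni x_i$ of diameter $\le\eta$. Because $F$ is convex, every $\lambda$-bead sits in the closed complement of $\mathrm{int}(F)$ and touches $\partial F$; since $B$ is strictly convex and $\lambda$ is small, each bead meets $\partial F$ in a single point, and we assign it to the patch containing that point. Writing $N^{\mathrm{loc}}_\lambda(P_i)$ for the largest number of pairwise non-overlapping $\lambda$-beads all touching $P_i$ from outside $F$, one has $N_\lambda(F,B)\le\sum_iN^{\mathrm{loc}}_\lambda(P_i)$; conversely, taking an optimal such family on each $P_i$ and deleting the beads lying within distance $O(\lambda)$ of a patch seam produces a genuine global necklace, so $N_\lambda(F,B)\ge\sum_iN^{\mathrm{loc}}_\lambda(P_i)-o(\lambda^{-(d-1)})$ for fixed $\eta$, the number of deleted beads being $O(\lambda^{-(d-2)})$ per patch. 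Since $\partial F$ is $\mathcal C^2$, over each $P_i$ the hypersurface is trapped between two hyperplanes parallel to the tangent hyperplane $H_{x_i}=x_i+T_{x_i}$ at mutual distance $O(\eta^2)$, and these pinch $N^{\mathrm{loc}}_\lambda(P_i)$ between the corresponding counts for genuinely flat patches, up to a relative error $O(\eta)$.

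\vspace{0.2cm}
\noindent
Next I would work out the flat model. Let $P$ lie in a hyperplane $H$ with outward unit normal $\nu$, and let $p\in\partial B$ be the unique point with outward normal $-\nu$ (uniqueness uses smoothness and strict convexity of $B$). A translate $\lambda B+v$ lies on the exterior side of $H$ and touches $H$ precisely when $v$ lies on the single hyperplane $H'=H-\lambda p$, which is parallel to $H$ and hence to $T_x$; the contact point is the orthogonal projection of $v$ and must land in $P$. Two such translates $\lambda B+v$ and $\lambda B+v'$ have disjoint interiors if and only if $v-v'\notin\mathrm{int}(\lambda B-\lambda B)=\mathrm{int}(2\lambda B)$, and since $v-v'$ is parallel to $T_x$ this is equivalent to $v-v'\notin\mathrm{int}_{T_x}\bigl(2\lambda(B\cap T_x)\bigr)$. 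Hence the flat count equals the maximal number of non-overlapping translates of the $(d-1)$-dimensional symmetric convex body $\lambda(B\cap T_x)$ whose centres fill the region $R\subset H'\cong\R^{d-1}$ projecting onto $P$, and $\vol_{d-1}(R)=(1+O(\eta))\vol_{d-1}(P)$.

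\vspace{0.2cm}
\noindent
The third step is the translative packing density asymptotics together with the passage to an integral. For a bounded Jordan-measurable $R\subset\R^{d-1}$ and a convex body $K$, the maximal number of non-overlapping translates of $\lambda K$ contained in $R$ equals $\vol_{d-1}(R)\bigl(\lambda^{d-1}\Delta(K)\bigr)^{-1}(1+o(1))$ as $\lambda\to0$: the upper bound uses that any translative packing of $K$ has density at most $\delta(K)$, after discarding the $O(\lambda)$-collar of $\partial R$, and the lower bound intersects an almost optimal periodic packing with $R$ and again discards the collar, using $\Delta(\lambda K)=\lambda^{d-1}\Delta(K)$. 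Applying this with $K=B\cap T_{x_i}$ and combining with the first two steps gives, for each fixed $\eta$, $\lambda^{d-1}N_\lambda(F,B)=\sum_i\vol_{d-1}(P_i)\,\Delta(B\cap T_{x_i})^{-1}+O(\eta)+o(1)$. Since $\partial F$ is smooth, $x\mapsto\nu_x$ is continuous, hence so are $x\mapsto B\cap T_x$ in the Hausdorff metric and $x\mapsto 1/\Delta(B\cap T_x)$; therefore the sum is a Riemann sum converging, as $\eta\to0$, to $\int_{\partial F}\Delta(B\cap T_x)^{-1}\,dx$. Letting first $\lambda\to0$ and then $\eta\to0$ yields the claim.

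\vspace{0.2cm}
\noindent
The main obstacle I expect to be the rigorous form of the first step: one must show that a globally maximal necklace decomposes, up to $o(\lambda^{-(d-1)})$ beads, into patchwise almost maximal flat packings, i.e.\ that neither the curvature of $\partial F$ nor the interaction of beads straddling patch seams alters the count by more than a vanishing fraction. This is the higher-dimensional analogue of the sliding and gap estimates of Section 2, and it is precisely here that convexity and $\mathcal C^2$-regularity of $F$ — together with smoothness and strict convexity of $B$, which make the contact normal and the contact point unique — enter. A secondary technical point is the continuity of the translative packing density $\Delta$ on the space of convex bodies, needed to convert the Riemann sum into the integral; note that for the upper and the lower estimate separately only upper, respectively lower, semicontinuity of $\Delta$ is required, so full continuity can be avoided if necessary.
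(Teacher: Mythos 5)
The paper itself offers essentially no proof of this proposition: it says only that ``the proof from (\cite{B} 9.10) can be adapted,'' so there is nothing on the page to compare you against line by line. Your sketch is, to the best of my knowledge, exactly the localisation strategy that the cited proof (for $F=B$) and its extension in \cite{BLSZ} rely on: cut $\partial F$ into small patches, flatten each patch, turn the flat count into a translative packing of $\lambda(B\cap T_x)$ in $\R^{d-1}$, invoke the translative packing density, and recognise the resulting Riemann sum. You also correctly read $\lambda\to\infty$ as the typo it is. So this is the same approach, spelled out in more detail than the paper gives.

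Two technical points deserve tightening. First, the contact point of $\lambda B+v$ with $H$ is $v+\lambda p$, i.e.\ the projection of $v$ onto $H$ \emph{along the direction $p$}, not the orthogonal projection; since this is a translation (hence a shear of orthogonal projection), $(d-1)$-volumes are unaffected, but the statement as written is not literally correct. Second, you invoke strict convexity of $B$ (for the uniqueness of $p$ and of the bead-to-patch assignment) and $\mathcal C^2$-regularity of $\partial F$ (for the $O(\eta^2)$ pinching); the proposition states only ``smooth,'' and for smoothness in the sense of unique support hyperplanes strict convexity is a genuinely extra hypothesis. This is more a defect of the proposition's wording than of your proof, but it should be said out loud. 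Finally, you flag honestly that the hard part is showing a globally maximal necklace agrees, up to $o(\lambda^{1-d})$ beads, with the union of patchwise optimal flat packings; that is indeed where the work lies, and neither your sketch nor the paper provides it. Note also that for the centers of beads on a genuinely curved patch the separation vector $v-v'$ is not parallel to $T_{x_i}$, so the reduction of the disjointness condition to $v-v'\notin\operatorname{int}_{T_{x_i}}(2\lambda(B\cap T_{x_i}))$ carries a perturbation that you need to show is $o(\lambda)$; this follows from the $C^2$ bound on the second fundamental form but should be stated.
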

\begin{proof}
The proof from (\cite{B} 9.10) can be adapted to work in 
this more general situation as well.  
\end{proof}

Although the present methods do not extend to general arbitrary domains with 
rectifiable boundary the  previous proposition seem to generalize at least when 
the boundary has positive reach.

\begin{remark} We have an obvious upper bound 
\begin{equation}   N_{\lambda}(F,B) \leq \lambda^{-d}  \frac{\vol(F_{2\lambda})-\vol(F)}
{\vol(B)} =\lambda^{1-d}{\rm area}_B(\partial F) + {o}(\lambda^{1-d})\end{equation}
which follows from the inclusion  $\cup_{i=1}^NB_{i} \subset F_{2\lambda}$
with  $B_i$ having disjoint interiors and the Steiner formula (see \cite{G}). 
\end{remark}

\section{Remarks and  conjectures}
The structure of the sets $N(F,B)$ is largely unknown. 
One can prove that when $F$ is convex and both $F$ and $B$ are smooth 
then $N(F,B)$ contains all integers from $N_1(F,B)$ on, at least 
in dimension 2. For general $F$ we saw that we could have gaps. 
It would be interesting to know whether $N(B,B)$ contains 
all sufficiently large integers when $F=B$ is a convex  
domain and not a parallelohedron.  It seems that Corollary \ref{consecut} 
can be generalized to higher dimensions as follows: 

\begin{conjecture}
The largest distance between consecutive elements of $N(F,B)$, where 
$F$ is convex is at most $2^d$ with equality when $F=B$ is a parallelohedron. 
\end{conjecture}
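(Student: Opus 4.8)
The plan is to separate the problem into a soft reduction and one hard quantitative estimate.

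First I would reduce the conjecture to a statement about a single step function. For $\lambda_1<\lambda_2$ one has $N_{\lambda_1}(F,B)\ge N_{\lambda_2}(F,B)$: starting from a maximal $\lambda_2$-necklace, apply to each bead the homothety of ratio $\lambda_1/\lambda_2$ centred at its contact point with $\partial F$; each bead still meets $\partial F$ and its interior has shrunk into the old interior, so the beads stay pairwise disjoint and disjoint from $\mathrm{int}(F)$, giving a $\lambda_1$-necklace of the same size. Hence $\lambda\mapsto N_\lambda(F,B)$ is a non-increasing, integer-valued step function on $(0,1]$ tending to $+\infty$ as $\lambda\to 0$, $N(F,B)$ is exactly its set of values, and the largest distance between consecutive elements of $N(F,B)$ is the size of its largest jump. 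So it is enough to bound every jump of this function by $2^d$.

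The plan for that bound mimics the case $d=2$. There Proposition~\ref{ineq} gives $p_B(F)-2\lambda\le 2\lambda N_\lambda(F,B)\le p_B(F)+\lambda p_B(\partial B)$, i.e. $\Psi(\lambda)\le N_\lambda(F,B)\le \Psi(\lambda)+c_B$ with $\Psi(\lambda)=p_B(F)/2\lambda-1$ continuous and strictly decreasing and $c_B=1+\tfrac12 p_B(\partial B)\le 5$; at a jump point $\lambda^{*}$, with value $c$ for $\lambda$ just above and $d$ just below, continuity of $\Psi$ yields $\Psi(\lambda^{*})\le c$ and $d\le \Psi(\lambda^{*})+c_B\le c+c_B$, so $d-c\le 4=2^{2}$ — this is Corollary~\ref{consecut}. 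In dimension $d$ the analogue of $\Psi$ should be the Busemann-type area term of the higher-dimensional Proposition above, corrected by lower-order terms,
\[
\Psi(\lambda)=\lambda^{1-d}\int_{\partial F}\frac{dx}{\Delta(B\cap T_x)}+(\text{lower-order Minkowski--Steiner corrections}),
\]
and what must be established is a refinement of that Proposition carrying an \emph{explicit $O(1)$ remainder}: the upper bound from $\bigcup_i B_i\subset F_{2\lambda}$ together with the Minkowski--Steiner expansion of $\vol(F_{2\lambda})$, and a matching lower bound from a quantitative form of the necklace-sliding arguments of Propositions~\ref{necklace}, \ref{gap} and \ref{leng}, in which the boundary/corner defect is kept rather than discarded. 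The equality clause should then follow from an explicit computation when $F=B$ is a parallelohedron, its face-to-face tiling of $\R^d$ by translates forcing the extremal necklaces into a rigid pattern (as the parallelogram tiling does in Corollary~\ref{consecut}).

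The step I expect to be the genuine obstacle is precisely this $O(1)$ control of the remainder in higher dimensions. Already the size and sign of the correction to the leading area term — visible as the $+\lambda p_B(\partial B)$ term when $d=2$ — has to be pinned down, and one must then show that the integer $N_\lambda(F,B)$ shadows a fixed continuous function to within a bounded error. For smooth strictly convex $F$ and $B$ this is a codimension-one packing-regularity statement: a small decrease of $\lambda$ should always permit splicing only a bounded number of new beads into a maximal necklace, and making this quantitative with the sharp constant $2^d$ is the analytic heart of the matter. For polytopal $F$ the lower-dimensional faces make even the exact count combinatorial — for the $d$-cube $F=B$ the volume bound $\vol(F_{2\lambda})/\vol(\lambda B)$ is attained by the face-parallel packing and gives $N_{1/n}(B,B)=(n+2)^d-n^d$ — so the extremal role of the parallelohedra, and the precise way the step function climbs between consecutive reciprocals of integers, have to be settled by a direct analysis of such tilings; this, together with isolating a hypothesis on $\partial F$ under which the bound $2^d$ (rather than some larger dimension-dependent quantity) is correct, is where I expect the bulk of the work to lie.
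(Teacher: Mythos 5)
This statement is labelled a \emph{conjecture} in the paper: the authors offer no proof of it, so there is nothing in the text to compare your argument against. What you have written is, and explicitly presents itself as, a research programme rather than a proof --- you twice defer the ``genuine obstacle'' and ``the bulk of the work'' to steps you have not carried out. As a proof the proposal therefore has to be judged incomplete, but some of what you do say is worth flagging.

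The one concrete reduction you carry out is correct and useful: the homothety of ratio $\lambda_1/\lambda_2 < 1$ centred at the contact point does send a $\lambda_2$-bead into itself (by convexity of the bead), keeps it touching $\partial F$, and so produces a $\lambda_1$-necklace of the same cardinality. Hence $\lambda \mapsto N_\lambda(F,B)$ is non-increasing and the gap question is exactly a question about its jumps. This is a clean observation that the paper does not state but which anyone attacking the conjecture would want.

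What you should confront head-on, though, is that your own computation for the $d$-cube is in tension with the conjecture. You write $N_{1/n}(B,B) = (n+2)^d - n^d$, and the difference of consecutive such values is
\[
\bigl((n+3)^d - (n+1)^d\bigr) - \bigl((n+2)^d - n^d\bigr),
\]
which for $d=2$ is the constant $4 = 2^2$ (consistent with Corollary~\ref{consecut}), but for $d=3$ equals $12n+18$ and grows without bound. So if, as in the planar case, $N_\lambda(B,B)$ is constant on each interval $(1/(n+1),\,1/n]$, the set $N(B,B)$ has unbounded gaps and the conjectured bound $2^d$ is simply false for $d \geq 3$. The alternative is that between reciprocals of integers $N_\lambda$ climbs through many intermediate values via non-grid packings; you allude to this (``the precise way the step function climbs between consecutive reciprocals of integers'') but do not address it, and it is far from clear that such packings can fill in a gap of size $12n+18$ to within $8$. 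Until this is resolved --- either by producing the intermediate packings or by showing that the correct bound is dimension- and $n$-dependent rather than the fixed constant $2^d$ --- the plan cannot proceed, and the conjecture itself may need to be reformulated. In short: you have correctly identified where the difficulty lies, you have a correct monotonicity lemma, but you have not proved the statement, and the evidence you yourself assemble suggests the statement as written is doubtful in dimensions $d\ge 3$.
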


\vspace{0.2cm}
\noindent 
Another natural problem is to understand the higher order terms in the 
asymptotic estimates. Or, it appears that second order terms from section 4  
are actually oscillating according to the inequalities in proposition 
\ref{ineq} as below:

\begin{corollary}
For convex $F$ we have 
\begin{equation} -2 \leq l_-(F,B)=\liminf_{\lambda\to 0}\frac{2\lambda N_{\lambda}(F,B) - p_B(F)}{\lambda}\leq \limsup_{\lambda\to 0}\frac{2\lambda N_{\lambda}(F,B) - p_B(F)}{\lambda}=l_+(F,B)\leq \frac{p_B(\partial B)}{2}\end{equation}
\end{corollary}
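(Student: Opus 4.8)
The left-hand bound $l_-(F,B)\ge-2$ is immediate from Proposition \ref{ineq}: its left inequality reads $p_B(F)-2\lambda\le 2\lambda N_\lambda(F,B)$, so $\frac{2\lambda N_\lambda(F,B)-p_B(F)}{\lambda}\ge-2$ for every $\lambda\in(0,1]$, and passing to the $\liminf$ yields $l_-(F,B)\ge-2$. The two inner equalities are the definitions of $l_\mp(F,B)$, and $l_-(F,B)\le l_+(F,B)$ is automatic. So the real content is the right-hand bound $l_+(F,B)\le\frac12 p_B(\partial B)$, and the plan is to sharpen the upper estimate $2\lambda N_\lambda(F,B)\le p_B(F)+\lambda p_B(\partial B)$ of Proposition \ref{ineq} into $2\lambda N_\lambda(F,B)\le p_B(F)+\frac12\lambda p_B(\partial B)+o(\lambda)$ as $\lambda\to 0$; dividing by $\lambda$ and taking the $\limsup$ then gives the result.

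Fix $\lambda$ small. As in the proof of Proposition \ref{ineq} I would start from a maximal necklace $B_1,\dots,B_N$ with centres $o_1,\dots,o_N$, and, sliding beads along $\partial^+F_\lambda=\partial(F_{\le\lambda})=\partial(F\oplus\lambda B)$ — a convex curve since $F$ and $B$ are convex — reduce to the case of an almost complete necklace, so that $d_B(o_i,o_{i+1})=2\lambda$ for all but at most one index $i$ and the lone ``gap'' is $<3\lambda$ (either the sliding argument behind Proposition \ref{gap} applies, or one simply adjoins the auxiliary bead $B^*_{N+1}$ of the proof of Proposition \ref{ineq}). Then $P(\lambda)=o_1\cdots o_N$ is a convex polygon inscribed in the convex curve $\partial^+F_\lambda$, each of its edges has $B$-length $2\lambda$ except for one of $B$-length $<5\lambda$, and $2\lambda N\le p_B(P(\lambda))\le p_B(\partial^+F_\lambda)=p_B(F)+\lambda p_B(\partial B)$.

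The decisive step is a finer comparison of $p_B(P(\lambda))$ with $p_B(\partial^+F_\lambda)$. I would first pass to a convex polygon $F'$ approximating $F$ closely enough, as in the proof of Proposition \ref{leng}, so that the vertices of $P(\lambda)$ lie on $\partial(F'\oplus\lambda B)$; on this polygonal model $\partial(F'\oplus\lambda B)$ is the concatenation of \emph{flat} portions, the translates of the edges of $F'$, of total $B$-length $p_B(\partial F')$, and \emph{turning} portions, the arcs of $\lambda\partial B$ inserted at the vertices of $F'$, of total $B$-length exactly $\lambda p_B(\partial B)$. A chord of $P(\lambda)$ has $B$-length at least $2\lambda$, the $B$-diameter of a $\lambda$-bead; since the arc of $\lambda\partial B$ at a vertex of the convex $F'$ spans strictly less than half of $\partial B$ (the exterior angle being $<\pi$), no chord of $P(\lambda)$ can have both endpoints on one such arc, hence $P(\lambda)$ carries at most one vertex per turning arc. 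The heart of the plan is to bound, run of turning arcs by run of turning arcs, the $B$-length that $P(\lambda)$ spends crossing such a run against the total $B$-length $\ell$ of the run: a chord forced to have $B$-length $\ge2\lambda$ cannot hug an arc of $\lambda\partial B$, whose ``$B$-radius'' is only $\lambda$, but must short-circuit a definite fraction of it — the extreme case being a $B$-diameter, which skips exactly half of the circle $\lambda\partial B$. Adding these local savings over all turning arcs, whose $B$-lengths sum to $\lambda p_B(\partial B)$, should give $p_B(P(\lambda))\le p_B(\partial F')+\frac12\lambda p_B(\partial B)+o(\lambda)$; letting $F'\to F$ and combining with $2\lambda N\le p_B(P(\lambda))$ yields the announced bound.

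The main obstacle is precisely this last estimate: making the ``factor $\frac12$'' saving across the turning arcs quantitative and uniform over all symmetric ovals $B$. The crude inequality ``a chord is no longer than the subarc it subtends'' only reproduces the constant $p_B(\partial B)$ of Proposition \ref{ineq}; one genuinely has to exploit the rigidity $d_B(o_i,o_{i+1})=2\lambda$, i.e. that consecutive chords each realize the full $B$-diameter of a $\lambda$-bead, to force them to cut across rather than follow the curvature of the turning arcs. Two subsidiary points will need care: the case when $B$ is not strictly convex, where ``at most one vertex per turning arc'' and the identification of the contact point $y_i=\frac12(o_i+o_{i+1})$ of consecutive beads must be argued via the remarks following Lemma \ref{ang2} (a chord of $B$-length $2\lambda$ may then run along a whole edge of a translate of $2\lambda B$); and the case when $\partial F$ is not smooth, already absorbed above into the polygonal approximation $F'\to F$, which must be carried out so as to control simultaneously the flat and turning contributions of $\partial(F'\oplus\lambda B)$.
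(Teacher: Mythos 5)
Your handling of the left-hand bound is exactly what the paper intends: dividing the first inequality of Proposition \ref{ineq} by $\lambda$ gives $l_-(F,B)\ge -2$, and the paper in fact offers nothing more for this corollary than reading off Proposition \ref{ineq}. But that same reading only yields $l_+(F,B)\le p_B(\partial B)$, and you correctly sensed that the printed constant $\frac{1}{2}p_B(\partial B)$ would require the genuinely sharper estimate $2\lambda N_\lambda(F,B)\le p_B(F)+\frac{1}{2}\lambda p_B(\partial B)+o(\lambda)$. The gap in your proposal is that this sharpening --- the step you yourself flag as the main obstacle --- is not merely unproved but false, so no refinement of the turning-arc analysis can deliver the factor $\frac12$. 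Take $F=B$ a square and $\lambda=1/n$: by the paper's own formula $N_\lambda(B,B)=4\left[\frac{1}{\lambda}\right]+4$ (proof of Corollary \ref{consecut}), or simply by placing $n$ beads along each side and one at each corner, one gets $2\lambda N_\lambda(F,B)=8+8\lambda=p_B(F)+\lambda p_B(\partial B)$ exactly, so the quotient $\frac{2\lambda N_\lambda(F,B)-p_B(F)}{\lambda}$ equals $p_B(\partial B)=8$ along this sequence, twice the claimed bound $4$; concretely, the polygon of centers coincides with the outer curve $\partial^+F_\lambda$ and there is no saving at the corners. Nor does strict convexity rescue the plan: for the Euclidean disk, $N_\lambda=\left[\pi/\arcsin\frac{\lambda}{1+\lambda}\right]=\frac{\pi}{\lambda}+\pi-\theta_\lambda+O(\lambda)$ with $\theta_\lambda\in[0,1)$, and along a sequence of $\lambda$ with $\theta_\lambda=0$ the quotient tends to $2\pi=p_B(\partial B)$, again exceeding $\frac{1}{2}p_B(\partial B)=\pi$.

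So the defect lies in the statement rather than in your reduction: the upper bound that actually follows from Proposition \ref{ineq} (and is sharp, by the two examples above) is $l_+(F,B)\le p_B(\partial B)$. The constant $\frac{1}{2}p_B(\partial B)$ evidently comes from the unnormalized form $N_\lambda(F,B)\le\frac{p_B(F)}{2\lambda}+\frac{p_B(\partial B)}{2}$ used in the corollary on consecutive elements of $N(F,B)$, i.e. it bounds $N_\lambda(F,B)-\frac{p_B(F)}{2\lambda}$, which is half of the quotient displayed here (and whose lower bound is $-1$, not $-2$). With either consistent normalization the corollary is an immediate consequence of Proposition \ref{ineq}, which is all the paper's argument amounts to; the strengthened tube-type inequality you set out to prove should be abandoned.
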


\vspace{0.2cm}
\noindent 
The exact meaning of $l_-(F,B)$ and $l_+(F,B)$ is not clear.
Assume that $F=B$. We computed: 
\begin{enumerate}
\item  
If $B$ is a disk then $l_-(F,B)=-2$ and $l_+(F,B)=0$;
\item 
If $B$ is a square then $l_-(F,B)=0$ and $l_+(F,B)=4$; 
\item If $B$ is a regular hexagon then $l_-(F,B)=0$ and $l_+(F,B)=3$; 
\item If $B$ is a triangle then $l_-(F,B)=0$ and $l_+(F,B)=3$. 
\end{enumerate}

\vspace{0.2cm}
\noindent 
There are various other invariants related to the second order terms. 
Set
\begin{equation} J_k=\{ \lambda \in (0,1]; {\rm \,\,    there \,\, exists \,\, a \,\, complete \,\, \lambda-necklace\,}\,\, B_1,\ldots, B_{k}\}, \,\,\, 
I_k=\{\lambda \in (0,1]; N_{\Lambda}(F,B)=k\}\end{equation} 
so that $J_k\subset I_k$.  Then 
$I_k$ are disjoint connected intervals  but we don't know whether 
this is equally true for $J_k$. It seems that $J_k$ are singletons when 
$B$ is a round disk.  


\vspace{0.2cm}
\noindent 
Let $\{r\}=r-[r]$ denote the fractionary part of $r$. 

\begin{conjecture} 
There exists  some constant $c=c(B)$ such that the 
following limit exists  
\begin{equation}   \lim_{r\to \infty, \{r\}=\alpha}2N_{c(B)/r}(F,B) - r= 
\varphi(\alpha) \end{equation}
where $\varphi:[0,1)\to [-2, p_B(\partial B)]$ is a 
right continuous function  with finitely many singularities. 
If $F$ and $B$ are polygons then 
$\varphi$ is linear on each one of its intervals of continuity.   
\end{conjecture}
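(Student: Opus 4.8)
The plan is to bootstrap from the two-sided estimates already in hand. Propositions \ref{ineq} and \ref{nonc} show that $2\lambda N_\lambda(F,B)$ differs from $p_B(\partial F)$ by at most a fixed multiple of $\lambda$; writing $\lambda=c/r$ and calibrating $c$ so that the linear terms cancel (this forces $c=p_B(\partial F)$, so for a general polygon $F$ the constant appearing in the conjecture must be read as depending on $F$ too, and one must also identify the correct period in which the phase $\alpha$ is to live), the quantity $2N_{c/r}(F,B)-r$ is bounded in $r$. The whole content of the conjecture is then that this bounded quantity settles down along the arithmetic progressions on which the relevant phase is fixed, and that the resulting $\varphi$ has the asserted regularity.

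First I would settle the polygonal case by an explicit bead count. When $\partial F$ is a polygon with edges $e_1,\dots,e_m$ and $B$ is a polygon, for small $\lambda$ a maximal almost complete $\lambda$-necklace (which exists by the sliding arguments of Section~3, the reach being positive away from the finitely many vertices) splits into a chain of beads running along each edge together with a cluster of boundedly many beads near each vertex. Along a straight edge $e$, two consecutive mutually touching translates of $\lambda B$ have centres at $d_B$-distance exactly $2\lambda$, and their contact points on $e$ are then $d_B$-distance exactly $2\lambda$ apart as well (in the generic case; configurations where $e$ is parallel to an edge of $B$ need separate but elementary handling), so $e$ carries $\lfloor p_B(e)/(2\lambda)+\theta_e\rfloor$ beads up to an additive $O(1)$, where $\theta_e\in[0,1)$ is a ``starting phase'' fixed by the vertex cluster at the initial endpoint of $e$. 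Substituting $\lambda=c/r$ makes each such term the floor of an affine function of $r$; summing over edges and reconciling the vertex clusters expresses $2N_{c/r}(F,B)$ as $r$ plus a finite sum of floors of affine-in-$r$ functions, the cluster sizes being piecewise-constant functions of incoming and outgoing phases. The lemma to establish is that phases and cluster sizes are governed by a finite transfer map traversing the cycle $e_1\to e_2\to\cdots\to e_m\to e_1$, that closing this cycle forces the coefficient of the unbounded part of $r$ to vanish, and that the surviving dependence on $r$ is through a single residue; right-continuity and the finiteness of the singular set then follow from those of the floor function, and piecewise-linearity in $\alpha$ because each surviving fractional part is affine in $\alpha$ on each interval of continuity.

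The main obstacle, I expect, is precisely this reconciliation: one must show that a bead chain arriving at a vertex with a given phase leaves along the next edge with a phase that is a prescribed piecewise-affine function of the incoming phase, the vertex angle, and the local shape of $B$ at the relevant contact normals, with only finitely many pieces, and that the composition of these transfer maps around the polygon closes up consistently with no surviving ``irrational rotation'' in $r$. For $B$ and $F$ with incommensurable edge data this is genuinely delicate, which is presumably why the statement is only a conjecture. Beyond polygons the difficulty compounds: for a smooth strictly convex $B$ and a positive-reach $F$ the edge-by-edge picture is replaced by a non-autonomous renewal process in which each bead advances a phase by an amount depending on the varying local dual width, and one would need a uniform renewal or equidistribution estimate for such processes even to know that $2N_{c/r}(F,B)-r$ converges along fixed-phase sequences at all. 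I would not expect the sliding-bead toolkit of Sections~2--3 to suffice there; new input, perhaps from the theory of bounded-remainder sets or from the fine asymptotics of lattice-point counts in convex regions, seems to be required.
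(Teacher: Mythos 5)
This statement is posed in the paper as an open \emph{Conjecture}; the paper offers no proof, only the two-sided bound of Proposition~\ref{ineq}, the corollary bracketing $l_\pm(F,B)$, and four worked examples with $F=B$. There is therefore no argument in the text to compare yours against, and what you have written is correctly presented as a research sketch rather than a proof.

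Your sketch is a sensible line of attack, and you are candid about where it stops. Two of your observations are worth underlining. First, the calibration point is a real issue with the conjecture's wording: for $2N_{c/r}(F,B)-r$ to remain bounded one is forced to take $c=p_B(\partial F)$, which is an invariant of $F$, not of $B$ alone; the examples the authors compute just before the conjecture all take $F=B$, where $c=p_B(\partial B)$ does depend only on $B$, so the intended reading is probably $c=c(F,B)$ or simply $F=B$. Second, the closure problem you flag — that the vertex transfer maps, composed around the cycle of edges, must cancel any irrational drift in $r$, and that beyond polygons one faces a non-autonomous renewal process with no obvious equidistribution input — is precisely what the sliding-bead machinery of Sections 2--3 does not supply, and presumably why the authors left this open. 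Carrying out even the polygonal case (including the degenerate situation where an edge of $\partial F$ is parallel to an edge of $B$, where the bead contact spacing is not the naive $2\lambda$) would already go beyond the paper.
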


\vspace{0.4cm}
\noindent 
{\bf Acknowledgments.} The authors are indebted to the referee for 
suggesting greater generality for the main result and 
to Herv\'e Pajot for several discussions concerning the 
results of this paper. The second author acknowledges support from  the grant 
ANR Repsurf: ANR-06-BLAN-0311.

\begin{small}
\bibliographystyle{plain}

\end{small}

\end{document}